\documentclass[11pt, reqno]{amsart}
\usepackage{amsmath}
\usepackage{amsthm}
\usepackage{bbm}
\usepackage{nicefrac}
\usepackage{graphicx}
\usepackage{amssymb}
\usepackage{color}
\usepackage{verbatim}
\usepackage{cite}
\usepackage[font=small,skip=5pt]{caption}
\usepackage{enumitem}
\usepackage[makeroom]{cancel}
\usepackage{mathtools}

\usepackage{tikz}
\usepackage{pgfplots}
\pgfplotsset{compat=1.15}
\usepackage{mathrsfs}
\usetikzlibrary{arrows}

\usepackage{graphicx,calc}
\newlength\myheight
\newlength\mydepth
\settototalheight\myheight{Xygp}
\settodepth\mydepth{Xygp}
\setlength\fboxsep{0pt}

\setlength{\parskip}{10pt}
\setlength{\parindent}{0pt}

\newtheorem{theorem}{Theorem}[section]
\newtheorem{lemma}[theorem]{Lemma}
\newtheorem{proposition}[theorem]{Proposition}
\newtheorem{corollary}[theorem]{Corollary}
\newtheorem{conjecture}[theorem]{Conjecture}



\theoremstyle{definition}
\newtheorem{definition}[theorem]{Definition}
\newtheorem{example}[theorem]{Example}
\usepackage{tikz}
\usepackage{stackrel}

\theoremstyle{remark}
\newtheorem{remark}[theorem]{Remark}


\newcommand{\CO}{\mathcal{O}}

\DeclareMathOperator{\mult}{mult}
\DeclareMathOperator{\Div}{div}
\DeclareMathOperator{\cone}{Cone}
\DeclareMathOperator{\conv}{Conv}

\DeclareMathOperator{\lcm}{lcm}

\setlength{\oddsidemargin}{0cm} \setlength{\evensidemargin}{0cm}
\setlength{\marginparwidth}{0in}
\setlength{\marginparsep}{0in}
\setlength{\marginparpush}{0in}
\setlength{\topmargin}{0in}
\setlength{\headheight}{0pt}
\setlength{\headsep}{20pt}
\setlength{\footskip}{.3in}
\setlength{\textheight}{9.0in}
\setlength{\textwidth}{6.5in}
\setlength{\parskip}{4pt}
\linespread{1.15}

\title{Generation of jets and Fujita's jet ampleness conjecture on toric varieties} 
\author{Jos\'e Luis Gonz\'alez and Zhixian Zhu}
\address{Department of Mathematics, University of California, Riverside, Riverside, CA 92521}
\email{jose.gonzalez@ucr.edu}
\address{Capital Normal University, Academy for multidisciplinary studies, Haidian, Beijing, China, 100081}
\email{zhixian@cnu.edu.cn}       


\let\oldbibliography\thebibliography
\renewcommand{\thebibliography}[1]{\oldbibliography{#1}
\setlength{\itemsep}{5pt}} 


\begin{document}

\vspace{-8mm}

\maketitle

\vspace{-9.5mm}

\begin{abstract}
Jet ampleness of line bundles generalizes very ampleness by requiring 
the existence of enough global sections to separate not just points and tangent vectors, but also their higher order analogues called jets.
We give sharp bounds guaranteeing that a line bundle on a projective toric variety is $k$-jet ample 
in terms of its intersection numbers with the invariant curves,    
in terms of the lattice lengths of the edges of its polytope,   
in terms of the higher concavity of its piecewise linear function    
and in terms of its Seshadri constant.
For example, the tensor power $k+n-2$ of an ample line bundle on a projective toric variety of dimension $n \geq 2$ 
always generates all $k$-jets, but might not generate all $(k+1)$-jets.
As an application, we prove the $k$-jet generalizations of Fujita's conjectures on toric varieties with arbitrary singularities.
\end{abstract}


\vspace{-2.5mm}

\section{Introduction}

\vspace{-2.5mm}

In this article we study jet ampleness of line bundles on projective toric varieties with arbitrary singularities.
Jet ampleness was introduced by Beltrametti, Francia and Sommese in \cite{BFS89} as a property of those line bundles that intuitively yield higher order embeddings.
Jet ampleness requires that the line bundle has enough global sections to separate not just points and tangent vectors, but also their higher order analogues called jets.
More precisely, a line bundle $L$ on a projective variety $X$ is said to be $k$-jet ample, if the evaluation map 
\begin{equation}  \label{equation.evaluation.map.definition}
H^0(X, L)\rightarrow H^0(X, L\otimes \CO_X/(m_{x_1}^{k_1}\otimes m_{x_2}^{k_2}\otimes\cdots\otimes m_{x_r}^{k_r}))
\end{equation}
is surjective, for any set $\{x_1, x_2,\ldots, x_r\}$ of distinct closed points of $X$, and any positive integers $k_1,\ldots,k_r$ such that $\sum_{i=1}^{r}k_i=k+1$.
In this definition, $k$ can be any nonnegative integer
and the $m_{x_i}$ denote the ideal sheaves of the points $x_i$. 
For example, $0$-jet ampleness is equivalent to global generation and
$1$-jet ampleness is equivalent to very ampleness.
From the definition, one can see that if $L_1$ is a $k_1$-jet ample line bundle and $L_2$ is a $k_2$-jet ample line bundle then $L_1 \otimes L_2$ is $(k_1+k_2)$-jet ample (see \cite[Lemma 2.2]{BS93}).
In particular, the { tensor} product of $k$ very ample line bundles is always $k$-jet ample.
Furthermore, for any fixed $k$, a sufficiently high power of a given ample line bundle becomes $k$-jet ample.

In the case of smooth projective toric varieties, Di Rocco showed that a line bundle $L$ is $k$-jet ample if its intersection with each $T$-invariant curve is at least $k$ \cite[Proposition~3.5, Theorem~4.2]{DiRoccoJets99}.
However, for $k \geq 0$ and $\operatorname{dim}X \geq 2$, if we allow singularities on the projective toric variety $X$, 
we will see below that a line bundle $L$ is $k$-jet ample if its intersection with each $T$-invariant curve is at least $k+n-2$, and moreover this bound is sharp (see Section~\ref{subsection.theorems}).

Our main results provide sufficient conditions for the $k$-jet ampleness of a line bundle on a projective toric variety 
in terms of its intersection numbers with the invariant curves,    
in terms of the lattice lengths of the edges of its polytope,     
in terms of the higher concavity of its piecewise linear function     
{ and in terms of its Seshadri constant}  (see Section~\ref{subsection.theorems}). 
We will apply these results to prove the $k$-jet generalizations of Fujita's conjectures on projective toric varieties with arbitrary singularities (see Section~\ref{subsection.application}).

{ 
Like in our main results, Di Rocco's work in the smooth case also considered intersection numbers, lengths of edges, higher concavity and Seshadri constants. 
For our extension to the general case, we establish some properties of higher concavity and Seshadri constants on toric varieties which can be of independent interest.  
In Section~\ref{section.concavity}, we extend the notion of higher concavity of piecewise linear functions to the singular case.  
This property is closely related to intersection numbers and to the combinatorial properties of the associated polytope.
In Section~\ref{section.Seshadri}, we establish the lower semi-continuity in the Zariski topology of the Seshadri constant function on projective toric varieties and as a consequence one can compute the global Seshadri constant from the values at the $T$-invariant points. 
A key ingredient is the description of the Seshadri constant of an ample Cartier $T$-divisor at a point in \cite[Proposition 3.12 and Remark~3.14]{Ito12} (see also \cite[Corollary~4.2.2]{bauer2009primer}).
}

\subsection{Some results in the literature}

The problem of finding sufficient conditions for jet ampleness has been studied on several classes of varieties.
In \cite{BaSz97,BaSz97-Surfaces}, Bauer and Szemberg gave criteria for the $k$-jet ampleness of line bundles on abelian varieties.
For example, \cite[Theorem 1]{BaSz97} shows that if $L_1,\ldots,L_{k+2}$ are ample line bundles on an abelian variety then $L_1 \otimes \cdots \otimes L_{k+2}$ is $k$-jet ample.
Bauer, Di Rocco and Szemberg in \cite{BRSz00-Jets-K3} and Rams and Szemberg in \cite{RamsSz04-Jets-K3} give criteria for the $k$-jet ampleness of line bundles on smooth $K3$  surfaces.
For example, \cite[Theorem]{RamsSz04-Jets-K3} shows that if $L$ is an ample line bundle on a smooth $K3$ surface, then $L^{\otimes m}$ is $k$-jet ample for any $m \geq \max\{2k+1,2\}$.  
Moreover, \cite[Proposition 1.2]{BRSz00-Jets-K3} gives examples of ample line bundles $L$ on smooth $K3$ surfaces such that $L^{\otimes m}$ is not $k$-jet ample for $2k \geq m \geq 1$, and hence the bound in \cite[Theorem]{RamsSz04-Jets-K3} is sharp. 
%
In \cite{Calabi-Yau-Threefolds-2000}, Beltrametti and Szemberg gave criteria for the $k$-jet ampleness of line bundles on smooth Calabi-Yau threefolds.
For example, \cite[Theorem~1.1 (1)]{Calabi-Yau-Threefolds-2000} shows that if $L$ is an ample line bundle on a smooth Calabi-Yau threefold and $k \geq 2$, then $L^{\otimes m}$ is $k$-jet ample for any $m \geq 3k$.
In \cite{Chintapalli2014}, Chintapalli and Iyer gave criteria for the $k$-jet ampleness of line bundles on smooth hyperelliptic varieties.
For example, \cite[Theorem~1.2]{Chintapalli2014} shows that if $L$ is an ample line bundle on a smooth hyperelliptic variety, then $L^{\otimes m}$ is $k$-jet ample for any $m \geq k+2$.
In \cite{Farnik2016}, Farnik gave criteria for the $k$-jet ampleness of line bundles on hyperelliptic surfaces.
In \cite[Theorem~4.1]{Farnik2016} she showed that if $L$ is a line bundle of type $(m,m)$ on a smooth hyperelliptic surface, with $m \geq k+2$, then $L$ is $k$-jet ample.
We point out that in the literature one can also find other notions of higher order embeddings.
In addition to $k$-jet ampleness, Beltrametti, Francia and Sommese also introduced and studied the concepts of $k$-spannedness and $k$-very ampleness in \cite{BFS89,BS90,BS91,BS93}.
These notions are also widely studied and they are weaker than $k$-jet ampleness in general.

\begin{remark}[Remark on terminology]
For convenience, one says that a line bundle $L$ on a projective variety $X$ generates $k$-jets over a subset $Z$ of $X$, when the evaluation map in (\ref{equation.evaluation.map.definition}) is surjective for any set $\{x_1, x_2,\ldots, x_r\}$ of distinct closed points of $Z$, and any positive integers $k_1,\ldots,k_r$ such that $\sum_{i=1}^{r}k_i=k+1$.
Hence, $L$ is $k$-jet ample if and only if it generates $k$-jets over any finite subset of $X$. 
Likewise, a Cartier divisor $D$ on $X$ is said to be $k$-jet ample when the line bundle $\mathcal{O}_{X}(D)$ is $k$-jet ample and is said to generate $k$-jets over $Z$ when $\mathcal{O}_{X}(D)$ does.
\end{remark}

\subsection{Our main results on $k$-jet ampleness}     \label{subsection.theorems}


Our main result Theorem~\ref{theorem.jet.ampleness.gamma} is a sufficient condition for any ample Cartier divisor on a projective toric variety $X$ to be $k$-jet ample. 
This sufficient condition is written in terms of a nonnegative constant $\Gamma_X$ associated to the toric variety which can be computed directly (see Definition~\ref{definition.gamma.toric.variety}).
When the projective toric variety is additionally smooth, the constant $\Gamma_X$ is zero and Theorem~\ref{theorem.jet.ampleness.gamma} specializes to the main result on the generation of jets on smooth toric varieties in \cite[Theorem 4.2]{DiRoccoJets99}. 
We refer to Section~\ref{section.concavity} for the definition and basic properties of higher concavity of piecewise linear functions on fans. { We refer to Definition~\ref{definition.seshadri} to review the notion of the Seshadri constant.}

\begin{theorem}     \label{theorem.jet.ampleness.gamma}
Let $D$ be an ample Cartier $T$-divisor 
on a projective toric variety $X$.   
Suppose that for some $k \in \mathbb{Z}_{\geq 0}$, 
any of the following equivalent conditions holds,
\begin{enumerate}
    \item \label{thm.condition.intersection} $D \cdot C \geq k+\Gamma_X$ for each complete $T$-invariant curve $C$ in $X$,
    \item \label{thm.condition.length} Each edge of the polytope $P_D$ has lattice length at least $k+\Gamma_X$,
    \item \label{thm.condition.concavity} The piecewise linear function $\psi_D$ is $(k+\Gamma_X)$-concave,
    \item \label{thm.condition.seshadri} { The Seshadri constant $\varepsilon(D)$ is at least $k+\Gamma_X$,}
\end{enumerate}
then $D$ is $k$-jet ample. 
\end{theorem}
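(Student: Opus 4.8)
The plan is to split the argument into two parts: first prove that conditions (1)--(4) are equivalent, and then prove that any one of them forces $k$-jet ampleness. The equivalences are a translation through the standard toric dictionary. Conditions (1) and (2) agree because the intersection number $D \cdot C$ of an ample Cartier $T$-divisor with the complete $T$-invariant curve $C$ attached to a wall of the fan equals the lattice length of the corresponding edge of $P_D$, by the orbit--cone correspondence and the toric projection formula. The equivalence of (2) and (3) is exactly the combinatorial content of $(k+\Gamma_X)$-concavity developed in Section~\ref{section.concavity}, where I expect $m$-concavity of $\psi_D$ to be characterized directly in terms of the lattice lengths of the edges of $P_D$. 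Finally, (3) $\Leftrightarrow$ (4) follows by combining the description of the Seshadri constant at the $T$-invariant points via \cite[Proposition 3.12, Remark 3.14]{Ito12} with the lower semicontinuity established in Section~\ref{section.Seshadri}, which reduces $\varepsilon(D)$ to its values at the $T$-fixed points and matches those values with the concavity measure.

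For the implication to $k$-jet ampleness I would work with condition (2) (equivalently (3)). The first step is to reduce the surjectivity in (\ref{equation.evaluation.map.definition}) to the generation of $k$-jets at the single $T$-fixed points. Since $D$ is $T$-invariant and $H^0(X, \CO_X(D))$ carries the character basis $\{x^m : m \in P_D \cap M\}$, a generic one-parameter subgroup $\lambda(t)$ degenerates any finite configuration of distinct closed points, as $t \to 0$, to a configuration supported on $T$-fixed points. Because surjectivity of the evaluation map is an open condition, it then suffices to verify it for configurations supported at the fixed points. Distinct fixed points $x_\sigma$ and $x_{\sigma'}$ lie in disjoint affine charts $U_\sigma$ and $U_{\sigma'}$, so the skyscraper target on the right-hand side of (\ref{equation.evaluation.map.definition}) splits as a direct sum; combined with global generation (which holds since $P_D$ is a lattice polytope) and the multiplicativity of jet ampleness recalled in the introduction, this decouples the problem into the generation of $k_i$-jets at each individual $x_{\sigma_i}$.

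The heart of the argument is then the local analysis at one $T$-fixed point $x_\sigma$, where $\sigma$ is a maximal cone with associated vertex $v_\sigma$ of $P_D$. Here the jets of order at most $k$ are indexed by the lattice points of the dual cone $\sigma^\vee$ whose order with respect to the maximal ideal $\mathfrak{m}_{x_\sigma}$ is at most $k$, while the available sections are the lattice points $m \in P_D \cap M$, each contributing the jet indexed by $m - v_\sigma \in \sigma^\vee \cap M$. Generating all $k$-jets at $x_\sigma$ thus amounts to showing that every lattice point of $\sigma^\vee$ of $\mathfrak{m}_{x_\sigma}$-order at most $k$ is of the form $m - v_\sigma$ for some $m \in P_D$. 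When $\sigma$ is smooth this order is simply the coordinate sum in the dual basis, and the requirement becomes exactly that the edges of $P_D$ at $v_\sigma$ have lattice length at least $k$, which recovers \cite[Theorem 4.2]{DiRoccoJets99} with $\Gamma_X = 0$.

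I expect the main obstacle to be the corresponding computation for a singular cone $\sigma$, where the $\mathfrak{m}_{x_\sigma}$-adic order function on $\sigma^\vee \cap M$ no longer equals the naive coordinate sum but differs from it by a bounded amount; the constant $\Gamma_X$ of Definition~\ref{definition.gamma.toric.variety} should be precisely what uniformly controls this discrepancy over all maximal cones. The technical core is therefore to prove that edge length at least $k + \Gamma_X$ forces \emph{every} jet class of order at most $k$ to be represented by a lattice point of $P_D$ near $v_\sigma$, which is where the definition of $\Gamma_X$ must be used most carefully. Once this local surjectivity is established at every vertex, feeding it back through the decoupling and semicontinuity reduction of the second step yields that $D$ is $k$-jet ample.
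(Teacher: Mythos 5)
Your outline of the equivalences and of the one-point analysis tracks the paper's own route: the equivalence of (\ref{thm.condition.intersection})--(\ref{thm.condition.seshadri}) is Proposition~\ref{proposition.triple.equivalence}, and your observation that $\Gamma_X$ controls the discrepancy between the $\mathfrak{m}_{x_\sigma}$-adic order and the coordinate-sum weight is exactly Lemma~\ref{lemma.generators}, which combined with the convexity argument of Proposition~\ref{proposition.onept} settles jets supported at a single fixed point. The genuine gap is your decoupling step for configurations supported at two or more fixed points. The target does split as $\bigoplus_i H^0(X, \CO_X(D)\otimes \CO_X/m_{x_i}^{k_i})$, but surjectivity onto each summand separately does \emph{not} imply surjectivity onto the direct sum: you must produce, for each jet at $x_i$, a global section realizing it while simultaneously mapping to zero in every other summand, i.e., vanishing to order at least $k_j$ at each $x_j$ with $j\neq i$. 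The appeal to ``multiplicativity of jet ampleness'' cannot supply this: that lemma concerns tensor products $L_1\otimes L_2$ of two line bundles that are already jet ample, whereas here there is a single divisor $D$ not presented as such a product, and invoking jet ampleness of its factors would be circular. This simultaneous-vanishing statement is precisely where the paper does its hardest work: Proposition~\ref{proposition.points} shows that any character $\chi^u$ with $u\in P_D$ and $\chi^{u-u_i}\notin m_i^{k_i}$ automatically satisfies $\chi^{u-u_j}\in m_j^{k_j}$, via the convex-geometric Lemma~\ref{lemma.non.adjacent} (selecting a simplicial cone of edge directions at the other vertex $u_j$ containing $u$, with control on the convex hull) and a two-case weight computation; note that the hypothesis $L_{\sigma_i}\geq k+\Gamma_{\sigma_i^{\vee}}$ enters with $k$ rather than $k_i$ exactly so that the slack $k+1-k_i\geq k_j$ from $\sum_i k_i = k+1$ can absorb the required vanishing at $x_j$. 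Nothing in your sketch produces this, and without it the multi-point case of the theorem is unproved.

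Secondarily, your reduction to fixed-point-supported configurations by a one-parameter degeneration is repairable but glosses a collision issue: as $t\to 0$ distinct points may merge, so the flat limit of $m_{x_1}^{k_1}\otimes\cdots\otimes m_{x_r}^{k_r}$ is a punctual scheme that is generally \emph{not} a product of fat points; one must observe that its ideal contains the merged product $\prod_j m_{y_j}^{l_j}$ with $\sum_j l_j=k+1$ (an Artinian length argument), check surjectivity at $t=0$ for \emph{all} merged multiplicity patterns, and then use openness of surjectivity in the flat family together with $T$-homogeneity to return to $t=1$. The paper sidesteps all of this with Borel's fixed-point theorem applied to the closed $T$-invariant locus $Y\subseteq X^{k+1}$ of bad tuples (Proposition~\ref{proposition.invariant.enough}), where repeated entries of a tuple encode merged multiplicities automatically; you should either adopt that argument or spell out the flat-limit containment. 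Fixing this still leaves the multi-point separation of the first paragraph as the essential missing idea.
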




On a projective toric variety $X$ of dimension $n \geq 2$, by Lemma~\ref{lemma.gamma.cone} the constant $\Gamma_X$ always satisfies $0 \leq \Gamma_X \leq n-2$, and therefore we get the following friendlier version for the working mathematician. 

\begin{corollary}     \label{corollary.jet.ampleness.k+n-2}
Let $D$ be an ample Cartier $T$-divisor 
on a projective toric variety $X$ of dimension $n \geq 2$.   
Suppose that for some $k \in \mathbb{Z}_{\geq 0}$, 
any of the following equivalent conditions holds,
\begin{enumerate}
    \item \label{corollary.condition.intersection} $D \cdot C \geq k+n-2$ for each complete $T$-invariant curve $C$ in $X$,
    \item \label{corollary.condition.length} Each edge of the polytope $P_D$ has lattice length at least $k+n-2$,
    \item \label{corollary.condition.concavity} The piecewise linear function $\psi_D$ is $(k+n-2)$-concave,
    \item  \label{corollary.condition.seshadri} { The Seshadri constant $\varepsilon(D)$ is at least $k+n-2$,}
\end{enumerate}
then $D$ is $k$-jet ample. This bound is sharp. 
\end{corollary}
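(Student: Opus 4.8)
The four conditions are already proved equivalent in Theorem~\ref{theorem.jet.ampleness.gamma}, so it suffices to work with condition~\eqref{corollary.condition.intersection}. The implication ``condition holds $\Rightarrow$ $D$ is $k$-jet ample'' is then immediate: by Lemma~\ref{lemma.gamma.cone} we have $\Gamma_X \le n-2$, so $D \cdot C \ge k+n-2 \ge k + \Gamma_X$ for every complete $T$-invariant curve $C$, and Theorem~\ref{theorem.jet.ampleness.gamma}\eqref{thm.condition.intersection} applies verbatim. The entire content of the corollary beyond Theorem~\ref{theorem.jet.ampleness.gamma} is therefore the sharpness claim, and this is where I would concentrate.

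To prove sharpness I would produce, for every $n \ge 2$ and every $k \ge 0$, a projective toric variety $X$ of dimension $n$ together with an ample Cartier $T$-divisor $D$ for which $D \cdot C \ge k+n-3$ holds for every invariant curve $C$ (equivalently, every edge of $P_D$ has lattice length at least $k+n-3$) and yet $D$ fails to be $k$-jet ample; this shows the constant $n-2$ cannot be lowered to $n-3$, and combined with the upper bound it pins the universal bound at exactly $k+n-2$. The example must carry a torus-fixed point whose local singularity is as bad as Lemma~\ref{lemma.gamma.cone} permits, i.e.\ a maximal cone $\sigma$ with $\Gamma_\sigma = n-2$. Thus the first thing I would establish is the attainment of the upper bound in Lemma~\ref{lemma.gamma.cone}, taking $\sigma$ to be an explicit simplicial cone of large multiplicity whose semigroup $\sigma^\vee \cap M$ has a maximally deep gap, and then completing the fan of $\sigma$ to a complete, projective fan in the standard way so that an ample divisor is available.

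The second step is to reduce $k$-jet ampleness of $D$ to a combinatorial statement at the fixed point $x_\sigma$ and then violate it. Writing $v$ for the vertex of $P_D$ dual to $\sigma$, the global sections of $\mathcal{O}_X(D)$ are spanned by the characters $\chi^u$ with $u \in P_D \cap M$, and on the chart $U_\sigma = \operatorname{Spec}\mathbb{C}[\sigma^\vee \cap M]$ these restrict to the monomials $\chi^{u-v}$ with $u-v \in \sigma^\vee \cap M$. Surjectivity of the evaluation map~\eqref{equation.evaluation.map.definition} for the single point $x_\sigma$ with $k_1 = k+1$ is then equivalent to the statement that the available exponents $\{u-v : u \in P_D \cap M\}$ span $\mathcal{O}_{X,x_\sigma}/\mathfrak{m}_{x_\sigma}^{k+1}$. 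I would choose $D$ so that at $v$ the edges of $P_D$ have the minimal admissible lattice length $k+n-3$, and then exhibit a single monomial of order $\le k$ in $\mathcal{O}_{X,x_\sigma}$ that is not represented by any such $u-v$. The deep semigroup gap of $\sigma$ is precisely what makes such a missing jet appear exactly when the edge length drops from $k+n-2$ to $k+n-3$, and its existence shows that the corresponding jet at $x_\sigma$ is not separated, hence that $D$ is not $k$-jet ample.

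The main obstacle is the combinatorial heart of the last two steps: identifying the extremal cone $\sigma$ with $\Gamma_\sigma = n-2$ and carrying out the local lattice-point count showing the evaluation map drops rank at edge length $k+n-3$. Concretely, one must (a) verify that the chosen $\sigma$ really attains $\Gamma_\sigma = n-2$, consistently with Lemma~\ref{lemma.gamma.cone}; (b) check that the proposed $D$ is genuinely ample and Cartier on the completed $X$, so that $P_D$ is a lattice polytope with the stated edge lengths; and (c) pin down the missing jet uniformly in $k$ and $n$. The degenerate endpoint $n=2$, where $\Gamma_X=0$ forces the bound to be the classical $k$ and sharpness is already witnessed by $\mathbb{P}^1\times\mathbb{P}^1$ (on which $\mathcal{O}(m,m)$ is $m$-jet ample but not $(m+1)$-jet ample), I would record separately as an easy base case.
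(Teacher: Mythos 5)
Your positive direction coincides with the paper's proof: the four equivalences come from Proposition~\ref{proposition.triple.equivalence} applied with the constant $k+n-2$ (note they are not literally contained in Theorem~\ref{theorem.jet.ampleness.gamma}, whose conditions involve $k+\Gamma_X$; this is a harmless citation slip since Proposition~\ref{proposition.triple.equivalence} holds for any nonnegative rational constant), after which $\Gamma_X\le n-2$ from Lemma~\ref{lemma.gamma.cone} makes Theorem~\ref{theorem.jet.ampleness.gamma} apply verbatim. The problem is the sharpness half, which you rightly identify as the entire remaining content but leave as a plan whose pivotal step is mis-specified. You assert the example \emph{must} carry a fixed point with $\Gamma_{\sigma^\vee}=n-2$ exactly, attaining the upper bound of Lemma~\ref{lemma.gamma.cone}, and you make verifying this attainment your step (a). That is neither necessary nor supported by anything in the paper: in Example~\ref{example.bounds.sharp} one takes $P_D=\conv(0,e_1,\ldots,e_{n-1},a)$ with $a=e_1+\cdots+e_{n-1}+re_n$ and $r>n-2$, and at the vertex $0$ one computes $\Gamma_{\sigma^\vee}=n-2-\frac{n-2}{r}$, which for $n\ge 3$ is \emph{strictly} below $n-2$ for every finite $r$; the paper only records that $k+\Gamma_{\sigma^\vee}$ approaches $k+n-2$ asymptotically as $r\to\infty$. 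Sharpness of the integer bound does not require attainment, because the failing hypothesis is an integer edge length.

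What actually closes the argument, and what your sketch defers as the ``main obstacle,'' is the explicit jet computation: for $G=(k+n-3)D$ and the fixed point $x$ at the vertex $0$, the lattice point $u=(k-1)e_1+(e_1+\cdots+e_n)$ satisfies $\chi^u\in m_x^k\smallsetminus m_x^{k+1}$ (membership in $m_x^k$ is immediate; non-membership in $m_x^{k+1}$ uses that the $n$-th coordinate of $u$ is $1$, so any factorization uses exactly one generator with positive $n$-th coordinate), while $W^{Q}_{\max}(u)=n+k-2-\frac{n-2}{r}>n+k-3$ forces $u\notin P_G$, so the evaluation map onto $k$-jets at $x$ is not surjective. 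You gesture at precisely this mechanism (``a single monomial of order $\le k$ that is not represented''), but you produce neither the cone, nor the divisor, nor the monomial, so the sharpness claim is not proved; worse, pursuing your stated route of first constructing a cone with $\Gamma_{\sigma^\vee}=n-2$ exactly could be a dead end, since such a cone is not needed and the paper never exhibits one. Your $n=2$ base case via $\mathcal{O}(m,m)$ on $\mathbb{P}^1\times\mathbb{P}^1$ is correct but superfluous once the family above is in place, as it already covers $n=2$.
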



\begin{example}  \label{example.jet.ampleness.curves}
If $D$ is an ample Cartier divisor 
on a projective toric variety $X$ of dimension $n \geq 2$,    
such that for some $k{\kern-0.13em} \in{\kern-0.13em} \mathbb{Z}_{\geq 0}$,{\kern-0.1em} 
$D \cdot C{\kern-0.02em} \geq{\kern-0.02em} k + n - 2$ for each complete $T$-invariant curve $C$ in $X$,\!  
then $D$ is $k$-jet ample. 
Indeed, this follows by applying Corollary~\ref{corollary.jet.ampleness.k+n-2} to a $T$-divisor linearly equivalent to $D$. 
\end{example}

\begin{example}    \label{example.products}
If $D_1,\ldots,D_{k+n-2}$ are ample Cartier divisors on a projective toric variety of dimension $n \geq 2$, then $D=D_1 + \cdots + D_{k+n-2}$ is $k$-jet ample.
Indeed, this follows from Example~\ref{example.jet.ampleness.curves} since $D \cdot C = \sum_{i=1}^{k+n-2} D_i \cdot C \geq k+n-2$, for each complete $T$-invariant curve $C$ in $X$.
\end{example}


The following theorem answers the natural question of how high a multiple of an ample Cartier divisor on a projective toric variety one needs to take to guarantee $k$-jet ampleness.
This result is sharp if one considers all projective toric varieties of a given dimension.

\begin{theorem}   \label{theorem.jet.ampleness.multiple}
Let $D$ be an ample Cartier divisor on a projective toric variety $X$ of dimension $n \geq 2$.
Then, $mD$ is $k$-jet ample for each $m \geq k+n-2$ and each $k \in \mathbb{Z}_{\geq 0}$.
This bound is sharp.
\end{theorem}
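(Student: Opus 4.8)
The plan has two parts, the positive statement and sharpness. For the positive part, first I would observe that since $D$ is ample and Cartier, for every complete $T$-invariant curve $C \cong \PP^1$ the degree $D \cdot C$ is a positive integer, hence $D \cdot C \geq 1$. Then for any $m \geq k+n-2$ one has $(mD)\cdot C = m\,(D\cdot C) \geq m \geq k+n-2$ for all such $C$, so $mD$ satisfies condition~\ref{corollary.condition.intersection} of Corollary~\ref{corollary.jet.ampleness.k+n-2} and is therefore $k$-jet ample. This disposes of the positive direction with essentially no work.

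For sharpness I must exhibit, for every $n \geq 2$ and $k \in \ZZ_{\geq 0}$, a projective toric variety $X$ of dimension $n$ and an ample Cartier divisor $D$ for which $(k+n-3)D$ fails to be $k$-jet ample. The observation guiding the search is that restricting a $k$-jet ample line bundle to an invariant curve $C \cong \PP^1$ only forces $D \cdot C \geq k$; thus the surplus $n-2$ in the bound is invisible along invariant curves and can only be caused by the behaviour of jets at a singular $T$-fixed point. Accordingly, the plan is to build $X$ with a single maximal cone $\sigma$ that is simplicial but as singular as Lemma~\ref{lemma.gamma.cone} permits, so that $\Gamma_X = n-2$ is attained at the corresponding fixed point $x_\sigma$, and to take $D$ to be a primitive ample Cartier divisor whose polytope $P_D$ has an edge of lattice length $1$ at the vertex dual to $\sigma$.

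With such a choice, $P_{(k+n-3)D} = (k+n-3)P_D$, so this edge has lattice length exactly $k+n-3$, one short of the threshold $k+\Gamma_X = k+n-2$ appearing in the length criterion of Theorem~\ref{theorem.jet.ampleness.gamma}. The remaining and decisive step, which I expect to be the main obstacle, is to convert this numerical deficiency into a genuine failure of $k$-jet ampleness rather than a mere failure of the sufficient condition. Concretely, I would take $r=1$ and the fat point of multiplicity $k+1$ supported at $x_\sigma$, and analyze the evaluation map~\eqref{equation.evaluation.map.definition} onto $H^0\bigl(X, \mathcal{O}_X((k+n-3)D)\otimes \mathcal{O}_X/m_{x_\sigma}^{k+1}\bigr)$. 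The global sections are given by the characters lying in $(k+n-3)P_D$, and near the vertex these are governed by the lattice points of the dual cone $\sigma^\vee$; the heart of the argument is a lattice-point count showing that, because $\sigma$ is singular, these characters cannot surject onto the jet space $\mathcal{O}_{X,x_\sigma}/m_{x_\sigma}^{k+1}$. Establishing this non-surjectivity, that is, pinning down precisely how the singularity of $\sigma$ removes at least one needed generator of the quotient, is the crux; once it is in place, $(k+n-3)D$ is not $k$-jet ample and the bound $k+n-2$ is sharp.
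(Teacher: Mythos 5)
The positive half of your proposal is correct and is exactly the paper's route: the paper deduces $k$-jet ampleness of $mD$ from Example~\ref{example.products} (a sum of $m \geq k+n-2$ ample divisors meets every complete $T$-invariant curve in degree at least $k+n-2$), which is your computation $(mD)\cdot C = m(D\cdot C) \geq m \geq k+n-2$ fed into condition~(\ref{corollary.condition.intersection}) of Corollary~\ref{corollary.jet.ampleness.k+n-2}. One small point of hygiene: that corollary is stated for ample Cartier \emph{$T$-divisors}, so as in Example~\ref{example.jet.ampleness.curves} you should first replace $D$ by a linearly equivalent $T$-divisor; this is harmless but should be said.

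The sharpness half, however, is a plan rather than a proof, and you concede this yourself: the step you call ``the crux'' --- converting the failure of the sufficient condition into an actual failure of $k$-jet ampleness --- is precisely what remains unproven, and failure of a sufficient criterion never by itself disproves jet ampleness. The paper closes this gap in Example~\ref{example.bounds.sharp} with a fully explicit construction: take $P=\conv(0,e_1,\ldots,e_{n-1},a)$ with $a=e_1+\cdots+e_{n-1}+re_n$ and $r>n-2$, let $D$ be the ample divisor with $P_D=P$, let $x$ be the fixed point at the vertex $0$ with dual cone $Q=\sigma^\vee=\cone(e_1,\ldots,e_{n-1},a)$, and exhibit the concrete witness $u=(k-1)e_1+a_1$, where $a_1=e_1+\cdots+e_n$. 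Two verifications then do all the work: (i) $\chi^u\in m^k\smallsetminus m^{k+1}$, proved by noting the $n$-th coordinate of $u$ is $1$, so any factorization of $\chi^u$ into $l$ monomials of $m$ has exactly one factor with positive $n$-th coordinate and the rest multiples of $\chi^{e_1}$, forcing $l\leq k$; and (ii) $u\notin P_{(k+n-3)D}$, proved by the weight computation $W^Q_{\max}(u)=n+k-2-\frac{n-2}{r}>n+k-3$, while every lattice point of $(k+n-3)P$ has weight at most $k+n-3$. Since monomial classes form a basis of $R/m_x^{k+1}$, the class of $\chi^u$ is nonzero and not in the image of the evaluation map, so $(k+n-3)D$ is not $k$-jet ample. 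Note also that your guiding desideratum --- choosing $\sigma$ so that $\Gamma_{\sigma^\vee}=n-2$ is \emph{attained} --- is neither what the paper's example achieves nor what is needed: there $\Gamma_{\sigma^\vee}=n-2-\frac{n-2}{r}<n-2$, approaching $n-2$ only as $r\to\infty$, and the paper gives no cone attaining $n-2$ exactly (Lemma~\ref{lemma.gamma.cone} only gives the upper bound). Insisting on exact attainment could therefore send you down a dead end; what sharpness requires is the direct exhibition of an unliftable jet, as above.
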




Our main result Theorem~\ref{theorem.jet.ampleness.gamma} will follow from the following stronger, but more technical sufficient condition for $k$-jet ampleness, with assumptions about each maximal cone rather than about the collection of maximal cones as a whole. For each maximal cone $\sigma$, we will define an integer valued invariant $L_\sigma$ (see Definition~\ref{defition.minimum.length}), to measure the positivity of $D$ and a rational number $\Gamma_{\sigma^\vee}$ (see Definition~\ref{definition.gamma}), to measure the difficulty of separating $k$-jets at the invariant point $x_\sigma$ of $X$. 

\begin{theorem}    \label{theorem.jet.ampleness.technical}
Let $D$ be an ample Cartier $T$-divisor 
on a projective toric variety $X=X(\Delta)$.   
If for some $k \in \mathbb{Z}_{\geq 0}$, 
$L_{\sigma} \geq k + \Gamma_{\sigma^{\vee}}$, for each maximal cone $\sigma \in \Delta$,  
then $D$ is $k$-jet ample. 
\end{theorem}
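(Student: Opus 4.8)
The plan is to reduce the global, multi-point condition of $k$-jet ampleness to a local statement at each torus-fixed point $x_\sigma$ and then read that statement off the geometry of $P_D$ near the corresponding vertex. Concretely, I would first show that it suffices for $L=\CO_X(D)$ to generate full $k$-jets at every fixed point, i.e. that $H^0(X,L)\to H^0(X,L\otimes\CO_X/m_{x_\sigma}^{k+1})$ is surjective for each maximal cone $\sigma$. Given distinct points $p_1,\dots,p_r$ with multiplicities $k_1,\dots,k_r$ summing to $k+1$, I would degenerate the configuration by a generic one-parameter subgroup $\lambda$ and let $t\to 0$. As the points collide at the fixed points one has $\prod_i m_{p_i(t)}^{k_i}\to m_{x_\sigma}^{k+1}$, so the flat limit of the jet subscheme $\coprod_i\operatorname{Spec}(\CO_X/m_{p_i}^{k_i})$ is contained in the $(k+1)$-st infinitesimal neighborhoods of the fixed points. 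Since surjectivity of the evaluation map is open in families and $\lambda(t)$ acts by automorphisms preserving $L$, surjectivity in the limit propagates back to the original configuration, while separation of the distinct fixed points involved is guaranteed by the positivity of $D$.

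Next I would work in the affine chart $U_\sigma=\operatorname{Spec}\mathbb{C}[\sigma^\vee\cap M]$. Writing $m_\sigma$ for the vertex of $P_D$ dual to $\sigma$, the character $\chi^{m_\sigma}$ trivializes $L$ near $x_\sigma$, and the sections $\{\chi^m:m\in P_D\cap M\}$ restrict to the monomials $\chi^{\,m-m_\sigma}$ with $m-m_\sigma\in(P_D-m_\sigma)\cap M\subseteq\sigma^\vee\cap M$. The ideal $m_{x_\sigma}$ is generated by the nonconstant characters, and in its adic filtration the order of $\chi^u$ equals the semigroup length of $u$, namely the largest number of nonzero elements of $\sigma^\vee\cap M$ summing to $u$; thus $\CO_X/m_{x_\sigma}^{k+1}$ has the monomial basis $\{\chi^u:\operatorname{ord}(u)\le k\}$. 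Since these classes are linearly independent, the local evaluation map is surjective exactly when every $u\in\sigma^\vee\cap M$ with $\operatorname{ord}(u)\le k$ satisfies $u+m_\sigma\in P_D$, which is a clean combinatorial containment.

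Finally I would verify this containment using the two invariants. The edges of $P_D$ at $m_\sigma$ have lattice lengths equal to the intersection numbers $D\cdot C$ of the invariant curves through $x_\sigma$, hence are at least $L_\sigma$ by Definition~\ref{defition.minimum.length}; and by the defining property of $\Gamma_{\sigma^\vee}$ in Definition~\ref{definition.gamma}, any $u$ with $\operatorname{ord}(u)\le k$ has combinatorial depth in $\sigma^\vee$ at most $k+\Gamma_{\sigma^\vee}$. The hypothesis $L_\sigma\ge k+\Gamma_{\sigma^\vee}$ then places every such $u$ within the part of $\sigma^\vee$ reachable up to that depth, and convexity of $P_D$ --- whose facets through $m_\sigma$ are exactly those cutting out $\sigma^\vee$, truncated at lengths $\ge L_\sigma$ --- yields $u+m_\sigma\in P_D$. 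Combined with the reduction above, this proves that $D$ is $k$-jet ample; the smooth case, where $\sigma^\vee\cap M$ is generated by a lattice basis and $\operatorname{ord}(u)=\sum_i u_i$, recovers $\Gamma_{\sigma^\vee}=0$ and Di Rocco's theorem.

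The hard part will be the first step: making the scheme-theoretic flat limit and the semicontinuity of surjectivity precise when several points flow to several distinct fixed points, and confirming that the positivity of $D$ genuinely separates them. A secondary technical point, and the place where the argument departs from the smooth case, is to match the $m_{x_\sigma}$-adic filtration on the possibly singular semigroup ring $\mathbb{C}[\sigma^\vee\cap M]$ exactly with the combinatorial quantity $\Gamma_{\sigma^\vee}$ of Definition~\ref{definition.gamma}.
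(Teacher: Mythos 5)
There is a genuine gap, and it sits exactly where you flagged ``the hard part'': your reduction in the first step is wrong as stated. You claim it suffices to have surjectivity of the single-point maps $H^0(X,L)\to H^0(X,L\otimes\CO_X/m_{x_\sigma}^{k+1})$ at each fixed point. But $k$-jet ampleness also requires the \emph{joint} maps for configurations $x_{\sigma_1},\ldots,x_{\sigma_r}$ of distinct fixed points with $k_1+\cdots+k_r=k+1$, and these do not follow formally from the single-point statements: one must produce sections realizing a prescribed $k_i$-jet at $x_{\sigma_i}$ while lying in $m_{x_{\sigma_j}}^{k_j}$ at every other $x_{\sigma_j}$. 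Your degeneration cannot repair this, because distinct $T$-fixed points are fixed by every one-parameter subgroup and never merge; after taking the flat limit you land precisely on a multi-point configuration supported at several distinct invariant points, i.e.\ on the statement you have not proved. Your remark that ``separation of the distinct fixed points involved is guaranteed by the positivity of $D$'' begs the question: mere ampleness gives no such thing (otherwise the theorem would be trivial), and the quantitative hypothesis $L_{\sigma_j}\geq k+\Gamma_{\sigma_j^\vee}$ at the \emph{other} cones is used essentially. In the paper this cross-vanishing is the technical heart: Proposition~\ref{proposition.points} shows that if $\chi^{u-u_i}\in R_i\smallsetminus m_i^{k_i}$ then $\chi^{u-u_j}\in m_j^{k_j}$ for $j\neq i$, splitting into the case where $u_j$ is not adjacent to $u_i$ (handled by the vertex-figure weight-function argument of Lemma~\ref{lemma.non.adjacent}, which forces $W_{\max}^{\cone(P_D,u_j)}(u-u_j)\geq L_{\sigma_j}>k-1+\Gamma_{\sigma_j^\vee}$ and concludes by Lemma~\ref{lemma.generators}) and the adjacent case (where one shows $u+k_jw\in P_D$ for $w$ the primitive direction from $u_i$ to $u_j$). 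None of this appears in your proposal.

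A secondary problem is the flat-limit step itself: the claim that the limit of $\coprod_i\operatorname{Spec}(\CO_X/m_{p_i}^{k_i})$ is contained in the $(k+1)$-st infinitesimal neighborhoods of the fixed points needs proof, and the fixed points are exactly the singular points of $X$, where the $m_{x_\sigma}$-adic order behaves unpredictably (this unpredictability is the raison d'\^etre of $\Gamma_{\sigma^\vee}$; see Example~\ref{example.bounds.sharp}, where a semigroup generator has maximal weight close to $n-1$). The paper avoids all of this with a cleaner and fully correct reduction (Proposition~\ref{proposition.invariant.enough}): the locus $Y\subseteq X^{k+1}$ of configurations where the evaluation map fails is closed and $T$-invariant, so by the Borel fixed-point theorem a nonempty $Y$ contains a $T$-fixed configuration --- reducing directly to multi-point jets supported at invariant points, with no degeneration or semicontinuity analysis. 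Your second and third steps, by contrast, are essentially correct and match the paper's one-point argument: the monomial description of $\CO_X/m_{x_\sigma}^{k+1}$ and the lifting criterion are Remark~\ref{remark.maximal.ideal.powers}, the bound $W_{\max}^{\sigma^\vee}(u)\leq k+\Gamma_{\sigma^\vee}$ for $\chi^u\notin m^{k+1}$ is Lemma~\ref{lemma.generators}, and the convexity argument placing $u+u_\sigma$ in $P_D$ (via the points $L_\sigma w_i+u_\sigma$ on the edges) is Proposition~\ref{proposition.onept}. So your proposal recovers the one-point case but leaves the multi-point case --- the actual content distinguishing $k$-jet ampleness from pointwise jet generation --- unproved.
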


\subsection{Application: Fujita{'}s k-jet ampleness conjecture} \label{subsection.application}

A basic question about adjoint linear systems is when they are base point free or very ample.  In \cite{Fuj88}, Fujita raised the following conjectures. 
\begin{conjecture}
Let $X$ be an $n$-dimensional projective algebraic variety, smooth or with mild singularities, and let $D$ be an ample divisor on $X$.
\begin{itemize}
    \item[(i)](freeness) If $\ell \geq n+1$, then $\ell D+K_{X}$ is globally generated.
    \item[(ii)](very ampleness) If $\ell \geq n+2$, then $\ell D+K_{X}$ is very ample.
\end{itemize}
\end{conjecture}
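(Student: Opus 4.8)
The plan is to derive the toric $k$-jet Fujita statement---that $\ell D + K_X$ is $k$-jet ample as soon as $\ell \ge n+1+k$, which at $k=0$ is the freeness assertion and at $k=1$ the very ampleness assertion---from the cone-by-cone criterion of Theorem~\ref{theorem.jet.ampleness.technical}. Throughout I assume $\ell D + K_X$ is Cartier, so that $k$-jet ampleness is even defined, as is automatic when $X$ is Gorenstein or $\ell$ is suitably divisible. By Theorem~\ref{theorem.jet.ampleness.technical} it suffices to verify, for each maximal cone $\sigma$, that the invariant $L_\sigma$ of $\ell D + K_X$ is at least $k + \Gamma_{\sigma^\vee}$; since this $L_\sigma$ is computed as the minimum of $(\ell D + K_X)\cdot C$ over the invariant curves $C$ through the point $x_\sigma$ (Definition~\ref{defition.minimum.length}), the statement reduces to a lower bound on these local intersection numbers.

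Because $D$ is ample Cartier we have $D\cdot C \ge 1$ for every invariant curve, so for $\ell \ge n+1+k$ each such curve through $x_\sigma$ satisfies
\begin{equation*}
(\ell D + K_X)\cdot C \;=\; \ell\,(D\cdot C) + K_X\cdot C \;\ge\; (n+1+k)(D\cdot C) + K_X\cdot C \;\ge\; k + \bigl[(n+1)(D\cdot C) + K_X\cdot C\bigr].
\end{equation*}
Hence everything comes down to the single local inequality
\begin{equation*}
(n+1)(D\cdot C) + K_X\cdot C \;\ge\; \Gamma_{\sigma^\vee}, \qquad \text{for every invariant curve } C \text{ through } x_\sigma,
\end{equation*}
equivalently $-K_X\cdot C \le (n+1)(D\cdot C) - \Gamma_{\sigma^\vee}$, a singular refinement of the classical toric bound on the anticanonical degree of an invariant curve, the case $\Gamma_{\sigma^\vee}=0$ being the smooth one.

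Establishing this local inequality is the main obstacle, and it is where arbitrary singularities enter. Each invariant curve through $x_\sigma$ has the form $C = V(\tau)$ for a facet $\tau$ of $\sigma$, and I would compute $K_X\cdot C$ from the expression $K_X = -\sum_\rho D_\rho$ together with the intersection numbers $D_\rho\cdot V(\tau)$, which are controlled by the wall relation across $\tau$; the positive quantity $(n+1)(D\cdot C)$ is read off as $n+1$ times the lattice length of the corresponding edge of $P_D$. The delicate step is to match the combinatorial invariant $\Gamma_{\sigma^\vee}$---defined from $\sigma^\vee$ to measure the obstruction to separating jets at $x_\sigma$ (Definition~\ref{definition.gamma})---against the defect by which the singularity of $\sigma$ lowers the anticanonical degree, so that the sharp threshold $\ell \ge n+1+k$ survives rather than a weaker bound such as $\ell \ge n+1+k+\lceil \Gamma_X\rceil$. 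I expect this to follow from the higher-concavity/intersection-number dictionary developed in Section~\ref{section.concavity}.

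Finally I would address the positivity needed to invoke the criterion. For $k\ge 1$ the local inequality gives $(\ell D + K_X)\cdot C \ge k + \Gamma_{\sigma^\vee}\ge 1$ on every invariant curve, so $\ell D + K_X$ is ample on the toric variety and Theorem~\ref{theorem.jet.ampleness.technical} applies directly, yielding $k$-jet ampleness and in particular very ampleness at $k=1$. For $k=0$ the divisor $\ell D + K_X$ may be only nef---on $\mathbb{P}^n$ with $\ell=n+1$ it is trivial---so instead of ampleness I would invoke the standard fact that on a complete toric variety a Cartier divisor meeting every invariant curve nonnegatively is globally generated; the local inequality with $k=0$ supplies exactly this nonnegativity, recovering Fujita freeness.
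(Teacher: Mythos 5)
Your plan reduces everything to the unproven ``local inequality'' $(n+1)(D\cdot C)+K_X\cdot C\geq \Gamma_{\sigma^{\vee}}$, and that is precisely where the proposal has a genuine gap: this inequality is the entire content of the theorem in the singular case, and nothing in Section~\ref{section.concavity} supplies it. The higher-concavity dictionary (Proposition~\ref{proposition.triple.equivalence}, Lemma~\ref{lemma.intersection}) only translates between edge lengths, intersection numbers, $k$-concavity and Seshadri constants; it contains no bound on anticanonical degrees of invariant curves. In the smooth case your inequality (with $\Gamma_{\sigma^{\vee}}=0$) amounts to the toric cone-theorem bound $-K_X\cdot C\leq n+1$ on extremal classes, and in the singular case that bound fails outright, so producing the exact defect term $\Gamma_{\sigma^{\vee}}$ is at least as hard as the theorem itself. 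Moreover, your curve-by-curve accounting requires $K_X\cdot C$ to be defined, i.e.\ $K_X$ to be ($\mathbb{Q}$-)Cartier; under your standing assumption that both $D$ and $\ell D+K_X$ are Cartier this forces $X$ to be Gorenstein, whereas the paper's Theorem~\ref{thm.Fujita} deliberately avoids this by working with an auxiliary $\mathbb{Q}$-Cartier divisor $D'$ with $0\geq D'\geq K_X$ and only $D+D'$ Cartier, never intersecting $K_X$ with curves at all.

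The paper's actual mechanism is structurally different and worth internalizing. First, ampleness of $D+D'$ is obtained from a nontrivial external nefness input (Theorem~\ref{Thm.Fujinoplus}, due to Fujino \cite{Fuj03} as strengthened by Payne \cite{Pay06}) via the convex splitting $D+D'=\frac{k}{n+k}D+\bigl(\frac{n}{n+k}D+D'\bigr)$, which yields $(D+D')\cdot C\geq k$; your positivity step instead derives ampleness from the very inequality you have not proved. Second, the bound $L_\sigma=m_\sigma\geq k+\Gamma_{\sigma^{\vee}}$ is not proved by decomposing $(D+D')\cdot C$ against a canonical degree, but by Payne's estimate $m_\sigma\geq t_\sigma-W^{Q}_{\min}(u'_\sigma)-1$ (Proposition~\ref{prop.ld.m}) combined with Lemma~\ref{lemma.weightof uprime} and a reflection trick: if $p=\sum a_iw_{i_j}\in S_Q$ computes $\Gamma_Q$, then $p'=\sum(1-a_i)w_{i_j}$ is an interior lattice point with $W^{Q}_{\max}(p')=n-W^{Q}_{\max}(p)$, giving $W^{Q}_{\max}(u'_\sigma)\leq n-\Gamma_Q-k_p$; the inequality $k_p\geq 1$ (for $p\neq 0$) is then exactly what absorbs the ``$-1$'' loss in Payne's estimate. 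That loss is invisible to a naive intersection-number decomposition, which is a further sign that the curve-by-curve route cannot be carried out as sketched. Finally, note that the paper's hypothesis is $D\cdot C\geq n+k$ for $X\not\cong\mathbb{P}^n$, strictly stronger than your threshold $\ell\geq n+k+1$, with $\mathbb{P}^n$ handled separately by a degree computation.
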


For smooth curves, both conjectures follow from the Riemann-Roch formula. 
In characteristic $0$, Fujita's freeness conjecture has been proved up to dimension $5$, while Fujita's very ampleness conjecture is only proved for surfaces. 
In dimension $2$, Reider proved both of Fujita's conjectures using the Bogomolov inequality. 
In dimension $3$, the freeness conjecture was proved using multiplier ideal sheaves by Ein and Lazarsfeld in \cite{EL93}.  
In dimension $4$, the freeness conjecture was proved by Kawamata in \cite{Kaw97}
and in dimension $5$ it was proved by Ye and Zhu in \cite{YZ15}.    


In \cite{Mustata02}, Musta\c{t}\u{a} gave a characteristic-free approach to vanishing results on arbitrary toric varieties. As an application, in \cite[Theorem 0.3]{Mustata02} he proved Fujita's freeness and very ampleness conjectures on smooth toric varieties of arbitrary characteristic. 
For singular toric varieties, Fujino proved a generalization of Fujita's freeness conjecture \cite{Fuj03}. Fujita's very ampleness conjecture was proved by Payne in \cite{Pay06}. In fact, they both showed that a smaller intersection number suffices, unless $X$ is $\mathbb{P}^n$ and $D$ is linearly equivalent to a hyperplane. 

On the other hand, one may hope that for a smooth projective variety $X$ and an ample Cartier divisor $D$, $K_X+(n+k+1)D$ generates $k$-jets if $D$ is ample. However, this fails even for the generation of jets at one point, see \cite{EKL95}.     
In Section~\ref{section.application}, we prove that a stronger generalization of Fujita's conjectures for $k$-jets still holds for projective toric varieties with arbitrary singularities. 

\begin{theorem}\label{thm.Fujita}
Let $X$ be a projective $n$-dimensional toric variety not isomorphic to $\mathbb{P}^n$. Let $D$ and $D'$ be $T$-invariant $\mathbb{Q}$-Cartier divisors such that $0\ge D'\ge K_X$ and $D+D'$ is Cartier. Let $k$ be a non-negative integer. If $D\cdot C\ge n+k$ for every $T$-invariant curve $C$, then $D+D'$ is $k$-jet ample. 
\end{theorem}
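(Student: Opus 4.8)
The plan is to deduce the statement from the intersection criterion of Theorem~\ref{theorem.jet.ampleness.gamma}, applied to the Cartier divisor $D+D'$. First I would record the shape of $D'$: since $K_X=-\sum_\rho D_\rho$ as a Weil divisor, the hypotheses $0\ge D'\ge K_X$ say exactly that the support function of $D'$ satisfies $0\le\psi_{D'}(u_\rho)\le 1$ on every ray $u_\rho$ (equivalently, the Weil coefficients of $D'$ lie in $[-1,0]$). Note also that $D$ is ample, as $D\cdot C\ge n+k>0$ for every invariant curve. So by the toric Nakai criterion it suffices to prove the single numerical inequality
\[
(D+D')\cdot C\ \ge\ k+\Gamma_X\qquad\text{for every complete $T$-invariant curve }C,
\]
and then quote Theorem~\ref{theorem.jet.ampleness.gamma}(\ref{thm.condition.intersection}); the case $k=0$, where the right-hand side may vanish, is harmless because a nef line bundle on a complete toric variety is automatically globally generated, i.e. $0$-jet ample.

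To bound $(D+D')\cdot C$ from below, I would use that $\psi_{D'}$ can decrease the intersection with $C$ only through the boundary divisors meeting $C$ positively. Geometrically, passing from $D$ to $D+D'$ pushes each facet of $P_D$ inward by the lattice amount $\psi_{D'}(u_\rho)\in[0,1]$, so $P_{D+D'}\subseteq P_D$ and $(D+D')\cdot C$ is the lattice length of the shrunken edge dual to the wall defining $C$. Termwise this yields
\[
(D+D')\cdot C\ \ge\ D\cdot C-\!\!\sum_{D_\rho\cdot C>0}\!\!(D_\rho\cdot C)=:D\cdot C-S_+(C),
\]
interpreted through $\psi_{D'}$ when the individual $D_\rho$ are not $\mathbb{Q}$-Cartier. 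Everything therefore reduces to the combinatorial claim that the \emph{forced} positivity of $D$ dominates $S_+(C)$ up to the correction $k+\Gamma_X$, that is, $D\cdot C\ge k+\Gamma_X+S_+(C)$.

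The main obstacle is precisely this last inequality, and it cannot be obtained curve by curve from the bare hypothesis $D\cdot C\ge n+k$. Indeed $S_+(C)\ge -K_X\cdot C$, and the anticanonical degree can exceed $n$: already on a Hirzebruch surface $\mathbb{F}_a$ the high section has $-K_X\cdot C_\infty=a+2$, so one genuinely needs the surplus coming from requiring $D\cdot C'\ge n+k$ on \emph{all} invariant curves at once. The mechanism is that the wall (primitive) relation at the wall defining $C$ expresses its class as a non-negative combination of the neighbouring edge classes; substituting $D\cdot C'\ge n+k$ into that relation forces $D\cdot C$ to grow in step with $S_+(C)$. On $\mathbb{F}_a$ this is just $[C_\infty]=[C_0]+a[f]$, giving $D\cdot C_\infty\ge(1+a)(n+k)\ge a+2+k=k+S_+(C_\infty)$, which absorbs the large anticanonical degree exactly. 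I would package this into a Fujino--Payne type lemma, with the hypothesis $X\not\cong\PP^n$ used to exclude the unique extremal configuration in which the estimate fails; this is exactly the excluded example, where $-K_{\PP^n}=(n+1)H$ and $D=nH$ give $D\cdot C=n$ yet $D+K_X=-H$, which is not even globally generated.

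Finally, once the numerical inequality is established, $D+D'$ is a Cartier divisor with $(D+D')\cdot C\ge k+\Gamma_X$; for $k\ge 1$ this is strictly positive, so $D+D'$ is ample and Theorem~\ref{theorem.jet.ampleness.gamma} makes it $k$-jet ample, while for $k=0$ nefness already gives global generation. An equivalent and perhaps more natural route is to verify the local hypotheses $L_\sigma\ge k+\Gamma_{\sigma^\vee}$ of Theorem~\ref{theorem.jet.ampleness.technical} one maximal cone at a time, since $L_\sigma$ and $\Gamma_{\sigma^\vee}$ are local at the invariant point $x_\sigma$, which is where the forcing argument is cleanest to run.
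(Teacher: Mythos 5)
Your reduction aims at the wrong inequality. You propose to verify the global bound $(D+D')\cdot C\ge k+\Gamma_X$ for every invariant curve and then quote Theorem~\ref{theorem.jet.ampleness.gamma}, but that bound is in general unattainable under the hypotheses: the argument only yields $(D+D')\cdot C\ge k$ globally (via the convex splitting $(D+D')\cdot C=\frac{k}{n+k}\,D\cdot C+\bigl(\frac{n}{n+k}D+D'\bigr)\cdot C$ once one knows $\frac{n}{n+k}D+D'$ is nef), and an edge of $P_{D+D'}$ joining two smooth vertices may have lattice length exactly $k$ while $\Gamma_X>0$ because of a singularity elsewhere on $X$. The surplus over $k$ is a local phenomenon attached to the singularity of each cone: what the paper actually proves is the vertex-by-vertex estimate $L_\sigma=m_\sigma\ge k+\Gamma_{\sigma^\vee}$ and then invokes the local criterion Theorem~\ref{theorem.jet.ampleness.technical}. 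Your closing remark that the local route is ``equivalent'' is therefore incorrect --- it is the only one of the two that can work.

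Second, the heart of the matter, your ``Fujino--Payne type lemma,'' is asserted rather than proved. The mechanism you describe (wall relations expressing the class of $C$ as a nonnegative combination of neighbouring invariant curve classes, so that $D\cdot C'\ge n+k$ on all walls forces $D\cdot C\ge k+\Gamma_X+S_+(C)$) is verified only on $\mathbb{F}_a$ and does not hold in this form on a general projective toric variety, where wall relations have mixed signs; moreover $S_+(C)$ is not even defined when the boundary divisors $D_\rho$ fail to be $\mathbb{Q}$-Cartier, and ``interpreting it through $\psi_{D'}$'' is precisely the nontrivial step. The paper sidesteps all of this: it quotes the Fujino--Payne nefness theorem (Theorem~\ref{Thm.Fujinoplus}, where the hypothesis $X\not\cong\mathbb{P}^n$ enters) to get $(D+D')\cdot C\ge k$ and ampleness, and then obtains the local surplus from Payne's vertex estimate $m_\sigma\ge t_\sigma-W^Q_{\min}(u'_\sigma)-1$ (Proposition~\ref{prop.ld.m}) combined with Lemma~\ref{lemma.weightof uprime} and the construction of the interior lattice point $p'=(1-a_1)w_{i_1}+\cdots+(1-a_n)w_{i_n}$ satisfying $W^Q_{\max}(p')=n-W^Q_{\max}(p)$; together with $k_p\ge 1$ at a singular cone this gives $m_\sigma\ge n+k-W^Q_{\max}(u'_\sigma)-1\ge k-1+\Gamma_{\sigma^\vee}+k_p\ge k+\Gamma_{\sigma^\vee}$. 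Until you supply a proof of your forcing lemma --- which in substance is Fujino's theorem plus these weight estimates --- the proposal has a genuine gap at its central step; note also that your treatment of $k=0$ via ``nef implies globally generated'' presupposes the very nefness of $D+D'$ that is at stake (the paper cites \cite{Fuj03} for this case).
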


\subsection{This article is organized as follows}
In Section~\ref{section.proofs}, we establish sufficient conditions for jet ampleness on {projective} toric varieties with arbitrary singularities.
In Section~\ref{section.examples}, we present examples illustrating our conditions and their sharpness.
In Section~\ref{section.application}, we prove $k$-jet generalizations of Fujita's conjectures on {projective} toric varieties with arbitrary singularities.
In Section~\ref{section.concavity}, we introduce a notion of higher concavity for piecewise linear functions on fans and study its basic properties. 
{ Lastly, in Section \ref{section.Seshadri}, we prove that the Seshadri constant function on projective toric varieties  is lower semi-continuous in the Zariski topology and its global value can be computed using the invariant points.}

\subsection{Notation and conventions}   \label{preliminaries.notation}

%
All of our varieties are defined over a fixed algebraically closed field $\mathbbm{k}$ of arbitrary characteristic.    
Throughout, $N$ denotes a lattice and $M$ denotes its dual lattice $\operatorname{Hom}(N,\mathbb{Z})$. 
We set $N_{\mathbb{R}}=N \otimes \mathbb{R}$ and $M_{\mathbb{R}}= M  \otimes \mathbb{R}$.
We denote the associated pairing by $\langle u, v \rangle $, for any elements $u \in M_{\mathbb{R}}$ and $v\in N_{\mathbb{R}}$.
We denote by $T$ the split torus with one-parameter subgroup lattice $N$ and character lattice $M$.
Throughout, $\Delta$ denotes a fan in $N_{\mathbb{R}}$ and $\operatorname{Supp}(\Delta) \subseteq N_{\mathbb{R}}$ denotes its support.
By facet of a cone we mean a codimension-one face.    
We say that an element of a lattice is primitive when it is nonzero and it is not a positive integer multiple of a lattice element, other than itself.
The lattice length of the linear segment between $u_1,u_2 \in M \otimes \mathbb{Q} \subseteq M \otimes \mathbb{R}$ is the unique nonnegative rational number $l$ such that $ u_1-u_2=lw$, for some $w \in M= M \otimes \mathbb{Z} \subseteq M \otimes \mathbb{Q}$ primitive.
For each cone $\sigma \in \Delta$, the lattice associated to $\sigma$, denoted by $N_\sigma$, is the lattice obtained by intersecting $N$ with the linear subspace of $N_{\mathbb{R}}$ spanned by $\sigma$.
We denote the semigroup algebra associated to a semigroup $S$ by $\mathbbm{k}[S]=\mathbbm{k}[\, \chi^u \, | \, u \in S ]$.

The affine toric variety associated to a cone $\sigma \in \Delta$ is denoted by $U_{\sigma}$ and the toric variety associated to the fan $\Delta$ is denoted by $X(\Delta)$.
On any toric variety $X=X(\Delta)$, we will denote by $K_X$ the negative of the sum of the $T$-invariant prime divisors, which is a canonical divisor on $X$. 
The $T$-invariant point of $X(\Delta)$ associated to a maximal cone $\sigma \in \Delta$ is denoted by $x_{\sigma}$.  
Given a $T$-divisor $D$ on $X(\Delta)$, we denote its associated polyhedron by $P_D \subseteq M_{\mathbb{R}}$.
For each vertex $u$ of $P_D$, we denote the cone in $M_{\mathbb{R}}$ over the translated polyhedron $P_D-u$ by $\cone(P_D,u)$. 
Recall that the points in $P_D \cap M$ are in correspondence with a basis of $H^0(X(\Delta),D)$ and that $P_D$ is a polytope if $X$ is complete.  
If $D$ is a Cartier $T$-divisor on $X(\Delta)$,
we denote its associated piecewise linear function by $\psi_D: \operatorname{Supp}(\Delta) \rightarrow \mathbb{R}$.
Recall that if for a cone $\sigma \in \Delta$, we choose $u_\sigma \in M$ 
such that $\operatorname{div}(\chi^{-u_\sigma})|_{U_{\sigma}}=D|_{U_{\sigma}}$, then 
$\psi_D(v)=\langle u_{\sigma},v \rangle$, for each $v \in \sigma$. 

Given a convex subset $S$ of a real vector space, we will follow the convention that a real valued function $f:S \rightarrow \mathbb{R}$ is said to be concave if $f(tx+(1-t)y) \geq tf(x)+(1-t)f(y)$, for all $x,y \in S$ and $t \in [0,1]$.
Given vectors $v,w,w_1,\ldots,w_n$ in a real vector space, 
we denote the ray from $v$ to $w$ by $\overrightarrow{vw}$;   
the line segment joining $v$ and $w$ by $\overline{vw}$;  
the linear span of $w_1,\ldots,w_n$ by $\langle w_1,\ldots,w_n \rangle_{\mathbb{R}}$; 
the convex hull of $w_1,\ldots,w_n$ by $\conv(w_1,\ldots,w_n)$; 
the cone spanned by $w_1,\ldots,w_n$ by $\cone(w_1,\ldots,w_n)$;   
and the translated cone $w+\cone(w_1-w,\ldots,w_n-w)$ by $\cone_w(w_1,\ldots,w_n)$. 
By a polytope we mean the convex hull of a finite set of points.   
Unless specified otherwise by the context, by a cone we mean a strictly convex, rational, polyhedral cone.

\subsection*{Acknowledgements} 
The authors would like to thank {Javier Gonz\'alez-Anaya}, {Atsushi Ito}, Mircea Musta\c{t}\u{a} and Lei Song for insightful conversations. Zhixian Zhu would also like to thank Tong Zhang for inviting her to visit East China Normal University, where part of this work was completed.  
Jos\'e Gonz\'alez was supported by a grant from the Simons Foundation (Award Number 710443) and by the UCR Academic Senate. 


\section{Generation of jets on toric varieties}    \label{section.proofs}

In this section, we prove our main results on jet ampleness on projective toric varieties stated in Section~\ref{subsection.theorems}.

\subsection{The maximum weight function $W^Q_{\max}$ and the constants $\Gamma_Q$ and $\Gamma_X$}

\begin{definition} \label{definition.weight.function}
Let $Q$ be a cone in a lattice $M$ and let $w_1,\ldots,w_{m}$ be the primitive lattice generators of its rays. 
We define the maximum weight function $W_{\max}^Q: Q \rightarrow \mathbb{R}$ by 
\[W_{\max}^Q(u) := \max\left\{ \left. \sum_{i=1}^{m}a_{i} \,  \right| \, u= \sum_{i=1}^{m}a_{i}w_{i} \textnormal{ and } a_i \geq 0 \textnormal{ for all $i$} \right\}.
\]
for any $u \in Q$. When $Q$ is simplicial we will sometimes write simply $W^Q$ to denote $W_{\max}^Q$.
\end{definition}

\begin{remark}     \label{remark.piecewise.linear}
Since all our cones are rational and strictly convex the function $W_{\max}^Q$ in Definition~\ref{definition.weight.function} is well-defined.
Moreover, it is well-known that $W_{\max}^Q$ is piecewise linear on $Q$, that is, there exists a subdivision of $Q$ into finitely many subcones such that the restriction of $W_{\max}^Q$ to each of these subcones is linear.  
One way to construct such a subdivision without introducing additional rays is to consider the cones over the faces of the polytope $\conv(w_1,\ldots,w_{m})$ that are visible from the origin, see \cite[Section 3]{Pay06} for details.
\end{remark}


\begin{definition}   \label{definition.gamma}
Let $Q$ be a cone in a lattice $M$ and let $w_1,\ldots,w_{m}$ be the primitive lattice generators of its rays. 
Let $m_Q$ denote the irrelevant maximal ideal of $\mathbbm{k}[Q \cap M]$.
For each $u \in Q \cap M$, let $k^Q_u=\max\{k \in \mathbb{Z}_{\geq 0} \ | \ \chi^u \in m_Q^k \} $.
Let $S_Q=\{ \sum_{i=1}^{m}a_{i}w_i \, | \, 0 \leq a_i < 1 \textnormal{ for all $i$} \}$.
Notice that $S_Q \cap M$ is a finite set. 
When there is no risk of ambiguity we will simply write $k_u$ and $S$ to denote $k_u^Q$ and $S_Q$, respectively.
We define the constant $\Gamma_Q$ associated to $Q$ as 
\[
\Gamma_Q := \max \{ W^Q_{\max}(u) - k^Q_u \, | \,  u \in S_Q \cap M  \}. 
\]
\end{definition}

\begin{remark}
In Definition~\ref{definition.gamma}, we see that $W^Q_{\max}(0) - k_0$=0 and then $\Gamma_Q \geq 0$ for any cone $Q$.
Moreover, notice that for every $u \in Q \cap M$, there exists $w \in S_Q \cap M$ such that $W^Q_{\max}(w) - k_w \geq W^Q_{\max}(u) - k_u$.
Hence, $\Gamma_Q = \max \{ W^Q_{\max}(u) - k_u \ | \  u \in Q \cap M  \}$.
\end{remark}


\begin{remark}
Given a cone $Q$ of dimension $n \geq 2$, the maximum weight $W^{Q}_{\max}(u)$ of any  irreducible element $u$ in the semigroup $Q \cap M$ is at most $n-1$.
Using Remark~\ref{remark.piecewise.linear}, the proof of this claim reduces to the case where $Q$ is simplicial, which is proved in \cite[Theorem 2]{EW91}.  
\end{remark}

We will now use \cite[Theorem 2]{EW91} to get bounds for $\Gamma_Q$.

\begin{lemma}     \label{lemma.gamma.cone}
For any smooth cone $Q$, we have $\Gamma_Q=0$.   
In particular, for any cone $Q$ of dimension $n \leq 1$, we have $\Gamma_Q=0$.
For any cone $Q$ of dimension $n \geq 2$, we have $0 \leq \Gamma_Q \leq n-2$. 
\end{lemma}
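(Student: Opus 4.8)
My plan is to handle the three assertions in increasing order of difficulty, reducing each to properties of the maximum weight function $W^Q_{\max}$ and the exponents $k^Q_u$ that were set up in Definition~\ref{definition.gamma}. First I would dispose of the smooth case. When $Q$ is smooth its primitive ray generators $w_1,\dots,w_m$ form part of a $\mathbb{Z}$-basis of $M$, so every lattice point $u \in Q \cap M$ has a \emph{unique} nonnegative integral expression $u = \sum_i a_i w_i$ with $a_i \in \mathbb{Z}_{\geq 0}$. Consequently $W^Q_{\max}(u) = \sum_i a_i$, and I would argue that $k^Q_u = \sum_i a_i$ as well: the irrelevant ideal $m_Q$ is generated by the $\chi^{w_i}$, and in the polynomial-ring situation $\chi^u = \prod_i (\chi^{w_i})^{a_i}$ lies in $m_Q^k$ exactly when $k \leq \sum_i a_i$. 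Hence $W^Q_{\max}(u) - k^Q_u = 0$ for every $u$, giving $\Gamma_Q = 0$. The $n \leq 1$ statement then follows since every cone of dimension at most one is automatically smooth (a single primitive generator, or the zero cone).

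For the main inequality $0 \leq \Gamma_Q \leq n-2$ when $n = \dim Q \geq 2$, the lower bound is already recorded in the remark after Definition~\ref{definition.gamma} (taking $u = 0$). For the upper bound I would bound $W^Q_{\max}(u) - k^Q_u \leq n-2$ uniformly over $u \in Q \cap M$. The key tool is the cited bound \cite[Theorem 2]{EW91}, reproduced in the remark above: every \emph{irreducible} element $v$ of the semigroup $Q \cap M$ satisfies $W^Q_{\max}(v) \leq n-1$. I would use this together with the two structural facts that $W^Q_{\max}$ is superadditive, i.e.\ $W^Q_{\max}(u+u') \geq W^Q_{\max}(u) + W^Q_{\max}(u')$ (immediate from the definition, since concatenating optimal representations is an admissible representation of the sum), and that $k^Q_u$ is the $m_Q$-adic order, which is \emph{additive under the natural grading}: concretely, if $u = v_1 + \cdots + v_k$ is a factorization of $\chi^u$ into irreducible monomials of $\mathbbm{k}[Q\cap M]$, then $k^Q_u \geq k$, so that the maximal length of such a factorization bounds $k^Q_u$ from below.

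The heart of the argument is to compare $W^Q_{\max}(u)$ against $k^Q_u$ through a common factorization of $u$ into irreducibles. Writing $\chi^u = \prod_{j=1}^{k} \chi^{v_j}$ with each $v_j$ irreducible and $k = k^Q_u$ chosen to realize the $m_Q$-adic order, superadditivity gives $W^Q_{\max}(u) \geq \sum_j W^Q_{\max}(v_j)$, which is the wrong direction; so instead I would run the comparison the other way, bounding $W^Q_{\max}(u)$ from \emph{above} by extracting a long factorization. The clean approach is induction on $W^Q_{\max}(u)$: if $u$ is itself irreducible then $k^Q_u = 1$ and $W^Q_{\max}(u) \leq n-1$ by \cite[Theorem 2]{EW91}, so $W^Q_{\max}(u) - k^Q_u \leq n-2$; if $u$ is reducible, write $u = v + u'$ with $v$ irreducible, note $k^Q_u \geq 1 + k^Q_{u'}$ and $W^Q_{\max}(u) = W^Q_{\max}(v) + W^Q_{\max}(u')$ holds on the cone of the relevant linearity piece (here I must be careful and instead use that I can always choose $v$ so that $W^Q_{\max}(u) \leq W^Q_{\max}(v) + W^Q_{\max}(u')$, which follows by taking $v$ to be one of the generators appearing in an optimal representation of $u$), and then apply the inductive hypothesis to $u'$.

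I expect the main obstacle to be the interaction between the two quantities being compared, since $W^Q_{\max}$ and $k^Q_u$ are defined through different optimizations: $W^Q_{\max}$ maximizes a real weight over representations by the ray generators, while $k^Q_u$ counts lattice-monomial factorization length in $\mathbbm{k}[Q \cap M]$. The delicate point is to produce a single decomposition $u = v + u'$ that simultaneously decreases the weight by the weight of an irreducible $v$ \emph{and} witnesses $k^Q_u \geq 1 + k^Q_{u'}$; the reduction to the simplicial case via Remark~\ref{remark.piecewise.linear} is what lets me assume the generators in an optimal weight-representation are linearly independent, which is precisely the setting in which \cite[Theorem 2]{EW91} and the clean inductive step apply. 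Once this bookkeeping is arranged, the bound $W^Q_{\max}(u) - k^Q_u \leq n-2$ follows by induction and taking the maximum over the finite set $S_Q \cap M$ yields $\Gamma_Q \leq n-2$.
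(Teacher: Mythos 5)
Your treatment of the smooth case is correct (and even a bit more explicit than the paper, which simply observes that $S_Q \cap M = \{0\}$ for a smooth cone), and your reduction to the simplicial case via Remark~\ref{remark.piecewise.linear} is the same move the paper makes, with the same two inequalities $W^Q_{\max}(u) = W^{Q''}(u)$ and $k^Q_u \geq k^{Q''}_u$ implicitly required. The gap is in your inductive step for the bound $W^Q_{\max}(u) - k^Q_u \leq n-2$. Your descent mechanism is to subtract a ray generator $w_i$ appearing in an optimal representation $u = \sum_i a_i w_i$, but $u - w_i \in Q$ requires $a_i \geq 1$ for some $i$, and this fails precisely for $u \in S_Q \cap M$ (all $a_i < 1$) --- which is exactly the set over which $\Gamma_Q$ is computed. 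Your base case covers only irreducible $u$, so a \emph{reducible} element of $S_Q \cap M$ falls through both cases. Nor can you repair this by splitting off an arbitrary irreducible $v$: in the simplicial case $W^Q$ is linear, so $W^Q(u) = W^Q(v) + W^Q(u-v)$ does hold, but the inductive estimate then gives only $W^Q(u) - k^Q_u \leq W^Q(v) - 1 + (n-2)$, which is useless once $W^Q(v) > 1$, and \cite[Theorem 2]{EW91} permits irreducibles with weight as large as $n-1$. So the induction as structured does not close.

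The missing ingredient is elementary and makes induction unnecessary: for a simplicial cone $Q$ of dimension $n$ and $u \in S_Q \cap M$, the (unique) representation has all coefficients $a_i < 1$, hence $W^Q(u) = \sum_i a_i < n$. Now split into two cases. If $u \neq 0$ is irreducible, then $k^Q_u \geq 1$ and $W^Q(u) \leq n-1$ by \cite[Theorem 2]{EW91}, so $W^Q(u) - k^Q_u \leq n-2$. If $u$ is reducible, then $\chi^u \in m_Q^2$, so $k^Q_u \geq 2$, and $W^Q(u) < n$ gives $W^Q(u) - k^Q_u < n-2$. This is exactly the paper's argument, phrased there as a short contradiction: if $W^Q(u) - k^Q_u > n-2$ with $u \in S_Q \cap M$, then $W^Q(u) > n-1$, so $\chi^u \in m_Q^2$ by \cite[Theorem 2]{EW91}, forcing $W^Q(u) > n$, contradicting $W^Q(u) < n$ on $S_Q$. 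Your overall architecture (smooth case, reduction to simplicial, replacement of $u$ by an element of $S_Q \cap M$, appeal to \cite{EW91}) matches the paper; only this final step needs to be replaced by the direct two-case estimate above.
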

\begin{proof}
If $Q$ is smooth then $S_Q \cap M =\{ 0 \}$, and then $\Gamma_Q=0$.
Assume now that $Q$ has dimension $n \geq 2$.
Let $u \in Q \cap M$ be such that $\Gamma_Q=W^Q_{\max}(u)-k^Q_u$.
By Remark~\ref{remark.piecewise.linear}, we can choose a subcone $Q'$ of $Q$ whose rays are rays of $Q$, such that $u \in Q'$ and $W^Q_{\max}$ is linear on $Q'$.
It follows that $W^Q_{\max}|_{Q'}=W^{Q'}_{\max}$ and in particular $W^Q_{\max}(u)=W^{Q'}_{\max}(u)$. %
We can now choose a simplicial cone $Q''$ whose rays are rays of $Q'$ such that $u \in Q''$.
We have that $W^Q_{\max}(u)=W^{Q'}_{\max}(u)=W^{Q''}(u)$ and $k^{Q}_{u} \geq k^{Q'}_{u} \geq k^{Q''}_{u}$.
Hence, $W^Q_{\max}(u)-k^Q_u \leq W^{Q''}(u)-k^{Q''}_u$.
Therefore we can assume that the given cone $Q$ was simplicial of dimension $n \geq 2$. 
By contradiction, let us suppose that $\Gamma_Q=W^Q(u)-k^Q_u > n-2$.
By appropriately replacing $u$ if necessary, we can assume that $u \in S_Q \cap M$. 
Notice that $u\neq 0$, then $\chi^u \in m_Q$, and hence $W^Q(u) > n- 2 + k^Q_u \geq n-1$.
Since $W^Q(u) > n-1$, then $\chi^u \in m_Q^2$ by \cite[Theorem 2]{EW91}.
Then, $W^Q(u) > n-2 + k^Q_u \geq n$, but this is a contradiction since $W^Q(w) \leq n$ for each $w \in S_Q$. 
\end{proof}


We can use the constant $\Gamma_Q$ to get the following test for ideal membership.

\begin{lemma}\label{lemma.generators}
Let $Q$ be a cone in a lattice $M$. 
Let $R=\mathbbm{k}[Q \cap M]$ and let $m_Q$ be its irrelevant ideal.
Then, for each $k \in \mathbb{Z}_{\geq 0}$ we have
\[
\left\{ \chi^u \in R \, \left. \right| \, W^{Q}_{\max}(u) > k-1+\Gamma_Q \right\} \subseteq m_Q^{k}.
\]
In particular, $R/m_Q^k$ is spanned over $\mathbbm{k}$ by the images of 
$\{ \chi^u \in R \, | \, W^{Q}_{\max}(u) \leq k-1+\Gamma_Q \}$.
\end{lemma}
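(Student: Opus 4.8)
The plan is to prove the containment $\{\chi^u \mid W^Q_{\max}(u) > k-1+\Gamma_Q\} \subseteq m_Q^k$ by induction on $k$, and then derive the spanning statement as an immediate consequence. The base case $k=0$ is trivial, since $m_Q^0 = R$ contains every $\chi^u$. For the inductive step, I would assume the containment holds for $k-1$ and prove it for $k$.

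The key idea for the inductive step is to relate the maximum weight $W^Q_{\max}$ to the definition of $\Gamma_Q$ via the invariant $k^Q_u$. Fix $u \in Q \cap M$ with $W^Q_{\max}(u) > k-1+\Gamma_Q$. First I would reduce to the case $u \in S_Q \cap M$, or more precisely I would exploit the structure of the semigroup: write $u = u' + w_i$ for some ray generator $w_i$ and some $u' \in Q \cap M$, chosen so that $W^Q_{\max}(u') = W^Q_{\max}(u) - 1$. Such a decomposition should exist because along a subcone on which $W^Q_{\max}$ is linear (using Remark~\ref{remark.piecewise.linear}), the weight function increases by exactly $1$ when one moves by a ray generator $w_i$ spanning that subcone; I need $u'$ to remain a genuine lattice point of $Q$, which is where some care is needed. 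Then $\chi^u = \chi^{u'} \cdot \chi^{w_i}$, and since $\chi^{w_i} \in m_Q$ (as $w_i \neq 0$), it suffices to show $\chi^{u'} \in m_Q^{k-1}$. But $W^Q_{\max}(u') = W^Q_{\max}(u) - 1 > k-2+\Gamma_Q = (k-1)-1+\Gamma_Q$, so the inductive hypothesis applied to $u'$ and exponent $k-1$ gives exactly $\chi^{u'} \in m_Q^{k-1}$, completing the step.

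An alternative, and perhaps cleaner, route is to argue directly from the definition of $\Gamma_Q$ rather than by induction on a decomposition. By definition $\Gamma_Q = \max\{W^Q_{\max}(w) - k^Q_w \mid w \in Q \cap M\}$, so for every $u$ we have $k^Q_u \geq W^Q_{\max}(u) - \Gamma_Q$. The hypothesis $W^Q_{\max}(u) > k-1+\Gamma_Q$ then yields $k^Q_u > k-1$, hence $k^Q_u \geq k$ since $k^Q_u$ is an integer. By the very definition of $k^Q_u = \max\{j \in \mathbb{Z}_{\geq 0} \mid \chi^u \in m_Q^j\}$, this means precisely $\chi^u \in m_Q^{k}$. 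This is in fact the most economical argument and avoids the decomposition subtleties entirely.

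The spanning statement follows formally: $R/m_Q^k$ is spanned over $\mathbbm{k}$ by the images of all monomials $\chi^u$ with $u \in Q \cap M$, since the $\chi^u$ form a $\mathbbm{k}$-basis of $R$. By the containment just proved, every $\chi^u$ with $W^Q_{\max}(u) > k-1+\Gamma_Q$ lies in $m_Q^k$ and hence maps to zero in the quotient, so the remaining images, namely those with $W^Q_{\max}(u) \leq k-1+\Gamma_Q$, already span. The main point of care in the whole argument is making sure the inequality $k^Q_u \geq W^Q_{\max}(u) - \Gamma_Q$ is used in the correct direction and that the integrality of $k^Q_u$ is invoked to pass from a strict inequality to the clean bound $k^Q_u \geq k$; the second approach makes this transparent, so I expect no substantial obstacle, the content being essentially a repackaging of the definition of $\Gamma_Q$.
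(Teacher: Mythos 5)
Your preferred ``economical'' route has a genuine gap: the identity $\Gamma_Q = \max\{W^Q_{\max}(u)-k^Q_u \mid u \in Q\cap M\}$ is \emph{not} the definition. Definition~\ref{definition.gamma} takes the maximum only over the finite set $S_Q\cap M$, i.e., over lattice points expressible with all ray coefficients in $[0,1)$. The extension of this maximum from $S_Q\cap M$ to all of $Q\cap M$ --- equivalently, the inequality $k^Q_u \geq W^Q_{\max}(u)-\Gamma_Q$ for \emph{every} $u\in Q\cap M$ --- is precisely the substance of the lemma, and it is exactly what the paper's proof establishes: starting from a weight-maximizing representation $u=\sum_i(a_i+\alpha_i)w_i$ with $a_i\in\mathbb{Z}_{\geq 0}$ and $0 \leq \alpha_i<1$, the remainder $w=\sum_i\alpha_i w_i$ lies in $S_Q\cap M$, the definition gives $\Gamma_Q \geq \sum_i \alpha_i - k_w$, and the factorization $\chi^u=\chi^{\sum_i a_iw_i}\chi^w\in m_Q^{\sum_i a_i+k_w}$ then yields $\chi^u \in m_Q^k$. (The paper does record the extended identity as an unproved ``notice that'' in the remark following Definition~\ref{definition.gamma}; citing that remark would make your one-liner formally valid \emph{within the paper}, but as a self-contained argument you have assumed the key point, and your closing claim that the lemma is ``essentially a repackaging of the definition of $\Gamma_Q$'' undersells where the work actually is.)

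Your inductive route is closer to a genuine proof, but the subtlety you flag is the wrong one: $u'=u-w_i$ is automatically a lattice point (a difference of lattice points), and the real issue is whether $u-w_i\in Q$ with the weight dropping appropriately. This can be arranged exactly when some coefficient $c_i\geq 1$ occurs in a weight-maximizing representation of $u$; then $W^Q_{\max}(u-w_i)\geq W^Q_{\max}(u)-1$, which is all the induction needs. The induction therefore stalls precisely when every coefficient is strictly less than $1$, i.e., when $u\in S_Q\cap M$, and in that terminal case you must fall back on the definition of $\Gamma_Q$ together with integrality of $k^Q_u$: from $\Gamma_Q \geq W^Q_{\max}(u) - k^Q_u$ one gets $k^Q_u> k-1$, hence $k^Q_u\geq k$ and $\chi^u \in m_Q^k$. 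With that case added, your induction becomes a correct proof --- essentially the paper's argument carried out one ray generator at a time instead of in a single decomposition --- but as written neither of your two routes closes this case, so the proposal as it stands has a genuine gap.
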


\begin{proof}
We will use the notation from Definition~\ref{definition.gamma}.  
Suppose that $W^{Q}_{\max}(u) > k-1+\Gamma_Q$, for some $u \in Q \cap M$.
Then, there exists an expression $u=\sum_{i=1}^{m}(a_{i}+\alpha_i)w_i$, 
with $a_i \in \mathbb{Z}_{\geq 0}$ and $0 \leq \alpha_i < 1$, for each $1 \leq i \leq m$, 
such that $\sum_{i=1}^{m}a_{i} + \sum_{i=1}^{m} \alpha_i > k-1+\Gamma_Q$.
Since $w:=\sum_{i=1}^{m}\alpha_iw_i=u-\sum_{i=1}^{m}a_{i}w_i$ is in $S_Q \cap M$, then $\Gamma_Q \geq W^{Q}_{\max}(w) - k_w \geq \sum_{i=1}^{m}\alpha_i - k_w$.
Then,
\[
\sum_{i=1}^{m}a_{i} + \sum_{i=1}^{m} \alpha_i > k-1 + \Gamma_Q \geq  k-1 + \sum_{i=1}^{m}\alpha_i - k_w.
\]
Hence $\sum_{i=1}^{m}a_{i} +k_w > k-1$, and thus $\sum_{i=1}^{m}a_{i} +k_w \geq k$.
We know that $\chi^w \in m_Q^{k_w}$, and therefore, 
\[
\chi^u = \chi^{\sum_{i=1}^m a_iw_i}\chi^w \in m_Q^{\sum_{i=1}^m a_i}m_Q^{k_w}=m_Q^{\sum_{i=1}^m a_i+k_w}\subseteq m_Q^k. \qedhere
\]  
\end{proof}


\begin{definition}       \label{definition.gamma.toric.variety}
For any toric variety $X$ associated to a fan $\Delta$ whose maximal cones are top-dimensional,  
we define  
$\Gamma_X := \max \{ \Gamma_Q \, | \,  Q=\sigma^{\vee} \textnormal{ for some maximal cone } \sigma \in \Delta  \}$.   
\end{definition}

\begin{remark}
It follows from Lemma~\ref{lemma.gamma.cone} that for a toric variety $X$ associated to a fan $\Delta$  whose maximal cones are top-dimensional,  
we have that $\Gamma_X=0$ if $X$ is smooth (for example, if $X$ has dimension $n=1$) and that $0 \leq \Gamma_X \leq n-2$ if $X$ has dimension $n \geq 2$.   
\end{remark}


\subsection{Reduction of generation of jets to the $T$-invariant case}

In this subsection, we will see that the generation of jets on complete toric varieties can be reduced to the case of jets supported at the $T$-invariant points. We start by recalling Borel fixed-point theorem.

\begin{theorem}[Borel Fixed-Point Theorem {\cite[\S 21.2]{Linear.Algebraic.Groups.Humphreys}}] 
Let $G$ be a connected solvable algebraic group, and let $X$ be a
nonempty complete variety on which $G$ acts regularly. Then $G$ has a fixed-point in $X$.
\end{theorem}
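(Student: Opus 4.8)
The plan is to prove this classical fact by induction on $\dim G$, with the decisive work concentrated in the one-dimensional groups, where completeness enters through the valuative criterion of properness. Two standard facts will be used repeatedly: for any closed subgroup $H \leq G$ the fixed locus $X^H$ is closed in $X$ (as $X$ is separated), hence itself complete; and if moreover $H$ is normal in $G$, then $X^H$ is $G$-stable, since for $y \in X^H$, $g \in G$ and $h \in H$ one has $h\cdot(g\cdot y) = g\cdot((g^{-1}hg)\cdot y) = g \cdot y$. Consequently, whenever $N \trianglelefteq G$ is a closed normal subgroup, $G$ acts regularly on the complete variety $X^N$ through the connected solvable quotient $G/N$.

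First I would settle the base case $\dim G = 1$, where $G \cong \mathbb{G}_a$ or $G \cong \mathbb{G}_m$. Fix any $x \in X$ and consider the orbit morphism $\phi\colon G \to X$, $t \mapsto t\cdot x$. Viewing $G$ as the dense open subset $\mathbb{A}^1$ or $\mathbb{G}_m$ of the smooth complete curve $\mathbb{P}^1$, the fact that a rational map from a smooth complete curve to a complete variety is a morphism extends $\phi$ to $\bar\phi\colon \mathbb{P}^1 \to X$. I then claim that $y := \bar\phi(\infty)$ in the additive case, respectively $y := \bar\phi(0)$ in the multiplicative case, is a fixed point. For a fixed $s \in G$, the translation $\mu_s$ of $\mathbb{P}^1$ (by $t \mapsto t+s$, resp. $t \mapsto st$) is a morphism fixing the relevant boundary point $\infty$, resp. $0$. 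The two morphisms $\bar\phi\circ\mu_s$ and $s\cdot\bar\phi(-)$ agree on the dense open $G \subset \mathbb{P}^1$, hence on all of $\mathbb{P}^1$ since $X$ is separated; evaluating at the point fixed by $\mu_s$ gives $y = \bar\phi(\mu_s(\mathrm{pt})) = s\cdot\bar\phi(\mathrm{pt}) = s\cdot y$, so $y$ is $G$-fixed.

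For the inductive step with $\dim G \geq 2$, I would produce a proper closed connected normal subgroup $N \trianglelefteq G$ with $0 < \dim N < \dim G$. If $G$ is nonabelian, take $N = [G,G]$, which for a connected group is closed and connected, is normal, satisfies $\dim N < \dim G$ by solvability, and has positive dimension (else $G$ would be abelian). If $G$ is abelian, take $N$ to be any one-dimensional closed subgroup, which is automatically normal. By the induction hypothesis applied to $N$, the locus $X^N$ is nonempty; it is complete and $G$-stable by the facts above. The quotient $G/N$ is connected solvable of dimension strictly less than $\dim G$ and acts regularly on $X^N$, so by induction it has a fixed point $y \in X^N$, and this $y$ is then fixed by all of $G$. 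The case $\dim G = 0$ is immediate, since $G = \{e\}$ and $X \neq \emptyset$.

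I expect the main obstacle to be exactly the base case, that is, making rigorous the heuristic that the limit of $t\cdot x$ along the group is unmoved by the group: the clean device is the extension-and-separatedness argument above, which replaces "limits" by honest morphisms $\mathbb{P}^1 \to X$ and exploits that the relevant translation of $\mathbb{P}^1$ fixes the boundary point. The structural inputs for the inductive step, namely that the derived subgroup of a connected group is closed and connected and that a connected solvable group of positive dimension contains a proper positive-dimensional closed connected normal subgroup, are standard for linear algebraic groups and may be cited.
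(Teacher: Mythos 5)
Your proposal is correct, and it is worth noting at the outset that the paper itself gives no proof of this statement: it is quoted as a classical result with a citation to \cite[\S 21.2]{Linear.Algebraic.Groups.Humphreys}, so the real comparison is with Humphreys's argument there. Humphreys also inducts on $\dim G$ using $N=[G,G]$ (closed, connected, normal, proper by solvability) and passes to the complete $G$-stable fixed locus $X^N$, but he finishes differently: he chooses a closed $G$-orbit in $X^N$ (orbits of minimal dimension are closed), notes that the isotropy group contains $[G,G]$ and is hence normal because $G/[G,G]$ is abelian, so the orbit is isomorphic to the connected \emph{affine} algebraic group $G/G_x$, which is also complete and therefore a single point. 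You replace this ``connected, affine and complete implies trivial'' mechanism by a full reduction to one-dimensional groups, where the fixed point is produced concretely as $\bar\phi(\infty)$ or $\bar\phi(0)$ via the curve-extension and separatedness argument; that base case is carried out correctly (the translation $\mu_s$ of $\mathbb{P}^1$ fixes the boundary point, and two morphisms from the reduced $\mathbb{P}^1$ to the separated $X$ agreeing on a dense open set coincide). What your route buys is a very hands-on base case with no orbit theory; what it costs is two extra standard inputs that Humphreys's version avoids: (i) the descended \emph{regular} action of $G/N$ on $X^N$, which requires descent along the faithfully flat quotient $G\to G/N$ and deserves an explicit citation, and (ii) in the abelian case, the existence of a one-dimensional closed \emph{connected} subgroup --- you should add ``connected'' (or pass to the identity component $N^{\circ}$), since the classification $\mathbb{G}_a$, $\mathbb{G}_m$ used in your base case applies only to connected groups. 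One last pedantic point: if ``variety'' is read as irreducible, $X^N$ need not be, but a connected group stabilizes each irreducible component of $X^N$, so the induction applies componentwise; with these small repairs your argument is a complete and legitimate alternative proof.
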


The argument for the reduction of the generation of jets to the $T$-invariant case in \cite[Page 180]{DiRoccoJets99} for smooth complete toric varieties carries to the not necessarily smooth case. We recall  this argument in the next proposition.

\begin{proposition}    \label{proposition.invariant.enough}
Let $D$ be a Cartier $T$-divisor on a complete toric variety $X$. If the divisor $D$ generates $k$-jets on any subset $Z$ of $X$ supported at $T$-invariant points, then $D$ is $k$-jet ample. 
\end{proposition}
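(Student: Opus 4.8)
The plan is to fix a single distribution of weights, encode $k$-jet generation as an open condition on the configuration space $X^r$, and then exploit that the complementary (closed, $T$-invariant) bad locus, if nonempty, would have to contain a torus-fixed configuration by the Borel fixed-point theorem recalled above --- contradicting the hypothesis.

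First I would reduce to one weight vector. By the remark on terminology, $D$ is $k$-jet ample if and only if it generates $k$-jets over every finite subset of $X$, so it suffices to fix positive integers $k_1,\dots,k_r$ with $\sum_i k_i = k+1$ and prove surjectivity of the evaluation map for every $r$-tuple of distinct points carrying these multiplicities. I would work on $X^r$ with the diagonal $T$-action, form the universal subscheme $\mathcal Z \subseteq X\times X^r$ cut out by the product ideal whose fiber over $(x_1,\dots,x_r)$ is $\prod_{i} m_{x_i}^{k_i}$, and consider the evaluation morphism of coherent sheaves on $X^r$,
\[
\phi\colon H^0(X,L)\otimes \mathcal O_{X^r} \longrightarrow \mathcal F := \pi_*\big(\mathcal O_{\mathcal Z}\otimes p_X^*L\big),
\]
where $L=\mathcal O_X(D)$ and $\pi, p_X$ are the projections from $X\times X^r$. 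I would then define the bad locus $B\subseteq X^r$ as the support of $\operatorname{coker}\phi$, i.e. the set of configurations at which $\phi$ fails to be fiberwise surjective.

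Next I would check that $B$ is closed and $T$-invariant. Invariance is immediate: since $D$ is a $T$-invariant divisor, $L$ carries a $T$-linearization, so every $t\in T$ identifies the evaluation datum at $(x_1,\dots,x_r)$ with the one at $t\cdot(x_1,\dots,x_r)$, whence surjectivity is a $T$-invariant condition. For closedness I would use that $\operatorname{coker}\phi$ is coherent and that the formation of the cokernel commutes with base change, so that $x\mapsto \dim_{k(x)}\big(\operatorname{coker}\phi\otimes k(x)\big)$ is upper semicontinuous and $B=\{x:\dim\ge 1\}$ is closed. Here lies the main obstacle: because the fiberwise length of $\mathcal Z$ jumps upward along the collision loci (and $X$ may be singular), $\mathcal Z\to X^r$ is not flat, so one must argue that, $\pi|_{\mathcal Z}$ being finite, the formation of $\mathcal F$ and of $\operatorname{coker}\phi$ is nonetheless compatible with restriction to fibers; this is what guarantees that the fiber of $\operatorname{coker}\phi$ at $x$ agrees with the genuine cokernel of the jet evaluation map there.

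Finally I would derive a contradiction from $B\neq\emptyset$. As a closed subset of the complete variety $X^r$, the set $B$ is itself complete and nonempty, and $T$ is connected and solvable, so the Borel fixed-point theorem produces a $T$-fixed configuration $(y_1,\dots,y_r)\in B$; diagonality of the action forces each $y_i$ to be a $T$-invariant point of $X$. Collecting equal entries into distinct points $w_1,\dots,w_s$ and setting $\kappa_j=\sum_{y_i=w_j}k_i$, one has $\prod_i m_{y_i}^{k_i}=\prod_j m_{w_j}^{\kappa_j}$ with $\sum_j\kappa_j=k+1$, so membership in $B$ says precisely that $D$ fails to generate $k$-jets over the $T$-invariant set $\{w_1,\dots,w_s\}$, contradicting the assumption that $D$ generates $k$-jets over every subset supported at $T$-invariant points. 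Hence $B=\emptyset$; letting the weight vector range over all compositions of $k+1$ shows that $D$ generates $k$-jets over every finite subset of $X$, i.e. $D$ is $k$-jet ample.
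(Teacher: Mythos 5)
Your proposal is correct and follows essentially the same strategy as the paper: both define the locus of configurations where jet evaluation fails in a power of $X$, observe it is closed and $T$-invariant, and apply the Borel fixed-point theorem to produce a $T$-fixed bad configuration contradicting the hypothesis (with collisions handled by merging multiplicities, exactly as the paper's $X^{k+1}$ encodes weights by repeated entries while you fix one composition $(k_1,\dots,k_r)$ at a time on $X^r$). Your sheaf-theoretic justification of closedness---support of the cokernel, with base change valid because $\pi|_{\mathcal Z}$ is finite, hence affine---correctly fills in the step the paper dispatches with ``by the nature of its definition, $Y$ is a closed subset.''
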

\begin{proof}
Let us fix a non-negative integer $k$. 
We define a subset $Y$ of the $(k+1)$-fold product $X^{k+1}$ as follows. 
A closed point $y=(y_1,\ldots,y_{k+1}) \in X^{k+1}$ is in $Y$ if and only if the evaluation map \[
H^0(X, \CO_X(D) ) \rightarrow H^0(X, \CO_X(D) \otimes \CO_X/(m_{x_1}^{k_1}\otimes m_{x_2}^{k_2}\otimes\cdots\otimes m_{x_r}^{k_r}))\]
is { not} surjective, where $x_1,\ldots,x_r \in X$ are the distinct points that occur as components of $y$ and $k_1,\ldots,k_r \in \mathbb{Z}^+$ are the respective number of times that each of the $x_i$ occurs as a component of $y$.
Notice that the surjectivity of this evaluation map does not depend on the order we give to $x_1,\ldots,x_r$.  
Notice also that $\sum_{i=1}^{r}k_i=k+1$.  
By the nature of its definition, $Y$ is a closed subset of $X^{k+1}$, and in particular $Y$ is complete.
Moreover, since $D$ is a $T$-divisor then $Y$ is a $T$-invariant closed subset of $X^{k+1}$, where $X^{k+1}$ is considered with the induced componentwise action of $T$.
Since $T$ is connected, we get an induced action of $T$ on each irreducible component of $Y$.
By contradiction, let us assume that $D$ is not $k$-jet ample but generates $k$-jets on any $Z$ supported at $T$-invariant points.
Then, $Y$ is not empty. 
By applying Borel Fixed-Point Theorem to any of the irreducible components of $Y$, we deduce that $Y$ has a $T$-invariant closed point $y$.   
The components of this closed point $y$ are $T$-invariant closed points in $X$ and give us an instance of a non-surjective evaluation map for a set supported at the $T$-invariant points of $X$, which is a contradiction.
\end{proof}


\subsection{Generation of jets at one $T$-invariant point}     \label{subsection.one.point}

In this subsection, we will present a sufficient condition for the generation of jets supported at one $T$-invariant point, for ample Cartier $T$-divisors on projective toric varieties. We start by describing our strategy in the following remark.

\begin{remark}  \label{remark.maximal.ideal.powers}
Let $D$ be a Cartier $T$-divisor on a toric variety $X=X(\Delta)$.
We are interested in studying the surjectivity of the evaluation map 
\begin{equation} \label{equation.restriction}
H^0(X, \mathcal{O}_X(D))\rightarrow H^0(X, \mathcal{O}_X(D)\otimes \CO_X/m_{x}^{k+1})    
\end{equation}
for $k \in \mathbb{Z}_{\geq 0}$, when $x \in X$ is a $T$-invariant point.
Let $\sigma \in \Delta$ be the maximal cone corresponding to $x$ and let $u_\sigma \in M$ be such that $\operatorname{div}(\chi^{-u_{\sigma}})|_{U_\sigma}=D|_{U_\sigma}$.
Let $R=\mathbbm{k}[\cone(P_D,u_{\sigma}) \cap M]$ and let $m_R$ be its irrelevant maximal ideal. 
The evaluation map in (\ref{equation.restriction}) can be written as the composition, 
\[
H^0(X, \mathcal{O}_X(D))\xhookrightarrow{\xi} H^0(U_\sigma, \mathcal{O}_X(D))\xrightarrow{\psi} H^0(U_\sigma, \mathcal{O}_X(D)\otimes \CO_X/m_{x}^{k+1})=H^0(X, \mathcal{O}_X(D)\otimes \CO_X/m_{x}^{k+1}),
\]
where the restriction map $\xi$ is injective since $\mathcal{O}_X(D)$ is an invertible sheaf, the evaluation $\psi$ is surjective since $U_\sigma$ is affine, and the two spaces on the right are equal since $\CO_X/m_{x}^{k+1}$ is supported at the point $x \in U_\sigma \subseteq X$.
We have that $H^0(U_\sigma,\mathcal{O}_X(D))=\chi^{u_\sigma}\mathbbm{k}[\cone(P_D,u_{\sigma}) \cap M]$. 
Notice that a section $\chi^u$, such that $\chi^{u-u_\sigma} \in  m_{R}^{k+1}$, maps to zero under the evaluation $\psi$.
Therefore the space $H^0(X, \mathcal{O}_X(D)\otimes \CO_X/m_{x}^{k+1})$ is spanned over $\mathbbm{k}$ by the images of the sections $\chi^u$ such that $\chi^{u-u_\sigma} \in 
\mathbbm{k}[\cone(P_D,u_{\sigma})] 
\smallsetminus m_{R}^{k+1}
= R \smallsetminus m_{R}^{k+1}$.
Our task will be to show that under the right assumptions such sections can be lifted to $H^0(X, \mathcal{O}_X(D))$, and more generally do this when we are given
a collection of $T$-invariant points $x_1\ldots,x_r$ and we consider evaluation maps of the form
\[H^0(X, \mathcal{O}_X(D))\rightarrow H^0(X, \mathcal{O}_X(D)\otimes \CO_X/(m_{x_1}^{k_1}\otimes m_{x_2}^{k_2}\otimes\cdots\otimes m_{x_r}^{k_r})).\]
\end{remark}


\begin{definition}    \label{defition.minimum.length}
Let $D$ be an ample Cartier $T$-divisor on a projective toric variety $X=X(\Delta)$.
Notice that in this case there is a correspondence between maximal cones $\sigma$ in the fan $\Delta$ and vertices $u_\sigma$ of the polytope $P_D$.
For each maximal cone $\sigma \in \Delta$, we will denote by $L_{\sigma}^D$ the minimum of the lattice lengths of the edges of $P_D$ having $u_{\sigma}$ as a vertex.
When there is no risk of ambiguity, we will write simply $L_{\sigma}$ to denote $L_{\sigma}^D$.
\end{definition}


We now present a sufficient condition for the generation of jets at one $T$-invariant point.

\begin{proposition}\label{proposition.onept}
(Generation of jets at one $T$-invariant point)
Let $D$ be an ample Cartier $T$-divisor 
on a projective toric variety $X=X(\Delta)$.   
Let $x \in X$ be a $T$-invariant point and let $\sigma \in \Delta$ be its corresponding maximal cone.  
If $L_{\sigma} \geq k + \Gamma_{\sigma^{\vee}}$, for some $k \in \mathbb{Z}_{\geq 0}$, then the evaluation map
\[H^0(X, \mathcal{O}_X(D))\rightarrow H^0(X, \mathcal{O}_X(D)\otimes \CO_X/m_{x}^{k+1})\]
 is surjective.  
\end{proposition}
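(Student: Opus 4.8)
The plan is to reduce the surjectivity to a membership statement in the polytope $P_D$ and then verify it by an explicit convexity argument. Following Remark~\ref{remark.maximal.ideal.powers}, I would set $Q=\cone(P_D,u_\sigma)=\sigma^\vee$, let $R=\mathbbm{k}[Q\cap M]$ with irrelevant maximal ideal $m_Q$, and recall that the target $H^0(X,\mathcal{O}_X(D)\otimes\CO_X/m_x^{k+1})$ is canonically identified with $R/m_Q^{k+1}$, a global section $\chi^u$ with $u\in P_D\cap M$ mapping to the class of $\chi^{u-u_\sigma}$. Thus the evaluation map is surjective exactly when the classes of $\{\chi^{u-u_\sigma}:u\in P_D\cap M\}$ span $R/m_Q^{k+1}$. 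By Lemma~\ref{lemma.generators}, applied with $k+1$ in place of $k$, the quotient $R/m_Q^{k+1}$ is already spanned over $\mathbbm{k}$ by the classes of those $\chi^v$ with $W^Q_{\max}(v)\le k+\Gamma_Q$, where $\Gamma_Q=\Gamma_{\sigma^\vee}$. Hence it suffices to prove that $u_\sigma+v\in P_D$ for every $v\in Q\cap M$ with $W^Q_{\max}(v)\le k+\Gamma_Q$; since $u_\sigma,v\in M$, such a point lies in $P_D\cap M$ and supplies the global section $\chi^{u_\sigma+v}$ we need.

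To carry this out I would first record the geometry of the rays of $Q$. As $u_\sigma$ is the vertex of $P_D$ corresponding to $\sigma$, the cone $Q=\cone(P_D,u_\sigma)$ is the tangent cone of $P_D$ at $u_\sigma$, and its primitive ray generators $w_1,\ldots,w_m$ are exactly the primitive lattice directions of the edges of $P_D$ emanating from $u_\sigma$. The edge in direction $w_i$ has lattice length $\ell_i\ge L_\sigma$ by Definition~\ref{defition.minimum.length}, so its far endpoint is $u_\sigma+\ell_i w_i\in P_D$. By convexity of $P_D$ the whole segment from $u_\sigma$ to that endpoint lies in $P_D$, and in particular $u_\sigma+L_\sigma w_i\in P_D$ for each $i$, using $L_\sigma\le\ell_i$.

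Now, given $v\in Q\cap M$ with $W^Q_{\max}(v)\le k+\Gamma_Q\le L_\sigma$, I would pick a maximizing expression $v=\sum_{i=1}^m a_i w_i$ with $a_i\ge 0$ and $s:=\sum_i a_i=W^Q_{\max}(v)\le L_\sigma$. If $s=0$ then $v=0$ and $u_\sigma+v=u_\sigma\in P_D$. Otherwise I would write $u_\sigma+v$ as the convex combination
\[
u_\sigma+v=\Bigl(1-\tfrac{s}{L_\sigma}\Bigr)u_\sigma+\sum_{i=1}^m\tfrac{a_i}{L_\sigma}\,(u_\sigma+L_\sigma w_i),
\]
whose coefficients are nonnegative and sum to $1$. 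Since each of $u_\sigma$ and $u_\sigma+L_\sigma w_i$ lies in $P_D$, convexity gives $u_\sigma+v\in P_D$, completing the surjectivity.

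The main obstacle is the geometric translation in the middle step: identifying the primitive ray generators of $\sigma^\vee$ with the primitive edge directions at $u_\sigma$, and matching the weight bound $W^Q_{\max}(v)\le L_\sigma$ with the minimal edge length so that the explicit convex combination has nonnegative coefficients. Everything else is bookkeeping once Lemma~\ref{lemma.generators} supplies the spanning set $\{\chi^v:W^Q_{\max}(v)\le k+\Gamma_Q\}$; the real content is showing that each such exponent is realized by a lattice point of $P_D$, i.e.\ by an honest global section.
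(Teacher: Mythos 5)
Your proposal is correct and follows essentially the same route as the paper: reduce via Remark~\ref{remark.maximal.ideal.powers} to lifting monomials of $R/m_Q^{k+1}$, bound their weights by $k+\Gamma_{\sigma^\vee}\le L_\sigma$ using Lemma~\ref{lemma.generators}, and conclude $u_\sigma+v\in P_D$ by the identical convex combination of $u_\sigma$ and the points $u_\sigma+L_\sigma w_i$ on the edges through $u_\sigma$. The only cosmetic difference is that the paper treats $k=0$ separately to ensure $L_\sigma\ge 1$, whereas in your argument the case $s>0$ automatically forces $L_\sigma\ge s>0$, so the division is harmless either way.
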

\begin{proof}
Note that $\mathcal{O}_X(D)$ is globally generated and 
$\{\chi^{u}\,|\,u\in P_D\cap M\}$ is a basis of $H^0(X, \mathcal{O}_X(D))$.
For $k=0$ the conclusion follows from the global generation of $\mathcal{O}_X(D)$, thus we assume that $k \geq 1$.
In particular, $L_\sigma \geq 1$. 
Let $u_\sigma$ be the vertex of the polytope $P_D$ corresponding to $\sigma$.  
Let $R=\mathbbm{k}[\cone(P_D,u_{\sigma}) \cap M]$ and let $m_R$ be its irrelevant maximal ideal. 
By Remark~\ref{remark.maximal.ideal.powers}, the space $H^0(X, \mathcal{O}_X(D)\otimes \CO_X/m_{x}^{k+1})$ is spanned over $\mathbbm{k}$ by the images of the sections $\chi^u$ such that $\chi^{u-u_\sigma} \in R \smallsetminus m_{R}^{k+1}$.
To show the surjectivity of the evaluation map, it is enough to show that $u \in P_D$ for all such sections.
For each such $\chi^u$, since $\chi^{u-u_\sigma} \notin m_R^{k+1}$, 
then by Lemma~\ref{lemma.generators} we have 
\begin{equation} \label{equation.inequality.weight.one.point}
W_{\max}^{\cone(P_D,u_{\sigma})}(u-u_\sigma) \le (k+1)-1+\Gamma_{\sigma^{\vee}} = k+\Gamma_{\sigma^{\vee}}\leq L_{\sigma}.
\end{equation}
Let $w_1,\ldots, w_{m}$ be the primitive generators of the rays of $\sigma^{\vee}$. 
By (\ref{equation.inequality.weight.one.point}),  
$u-u_\sigma = \alpha_1 w_1 +\ldots+ \alpha_m w_{m}$,
for some $\alpha_i \geq 0$ such that $\sum_{i=1}^{m} \alpha_i \leq L_{\sigma}$.
Therefore, we get the convex combination
\[
u = \frac{\alpha_1}{L_{\sigma}} (L_{\sigma}w_1+u_\sigma) +\cdots+ \frac{\alpha_m}{L_{\sigma}} (L_{\sigma}w_{m}+u_\sigma) + (1-\sum_{i=1}^{m}\frac{\alpha_i}{L_{\sigma}})u_\sigma.
\]
Notice that $w_1+u_\sigma,\ldots, w_{m}+u_\sigma$ are the first lattice points after $u_\sigma$ along the edges of $P_D$ containing $u_\sigma$ as a vertex. By definition, each such edge has lattice length at least $L_{\sigma}$.
Then, $L_{\sigma}w_1+u_\sigma,\ldots, L_{\sigma}{w_{m}}+u_\sigma$ are in $P_D$, and thus by convexity $u$ is also in $P_D$, as desired.
\end{proof}


\subsection{Generation of jets along two or more $T$-invariant points}     \label{subsection.two.or.more.points}

In this subsection, we will present a sufficient condition for the generation of jets supported along two or more $T$-invariant points, for ample Cartier $T$-divisors on projective toric varieties. 

\begin{lemma}    \label{lemma.non.adjacent}
Let $P$ be an $n$-dimensional polytope, $v$ be a vertex of $P$ and $v_1,\ldots,v_n$ be distinct vertices of $P$ each connected to $v$ by an edge.
Then, for any $u \in \conv(v,v_1,\ldots,v_n)$ and any $w \notin \{v,v_1,\ldots,v_n \} $ vertex of $P$, 
there exist $w_1,\ldots,w_{m}$ distinct vertices of $P$, each connected to $w$ by an edge,  
such that 
$u \in \cone_w(w_1,\ldots,w_{m})$, 
$\cone_w(w_1,\ldots,w_{m})$ is simplicial, 
and $u \notin \conv(w,w_1,\ldots,w_{m}) \smallsetminus \conv(w_1,\ldots,w_{m})$.
\end{lemma}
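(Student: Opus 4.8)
The plan is to treat the three required properties in two stages: the containment and simpliciality conditions are soft and follow from triangulating a tangent cone, while the last condition carries all of the content.

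First I would record the easy structural facts. Since $w$ is a vertex of $P$, the set $\{x-w \mid x\in P\}$ generates the tangent cone of $P$ at $w$, and this cone is generated precisely by the primitive directions $x-w$ toward the vertices $x$ of $P$ joined to $w$ by an edge; write $E_w$ for the set of such edge-neighbors. As $u\in\conv(v,v_1,\ldots,v_n)\subseteq P$ and $u\neq w$ (a convex combination of vertices different from the extreme point $w$ cannot equal $w$), the vector $u-w$ lies in this tangent cone. Using the triangulation of a rational cone into simplicial subcones spanned by its own rays recalled in Remark~\ref{remark.piecewise.linear}, I would subdivide the tangent cone at $w$ into simplicial cones whose rays are edge directions, and let $\cone_w(w_1,\ldots,w_m)$ be a full-dimensional cone of this subdivision containing $u$. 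This gives at once that $u\in\cone_w(w_1,\ldots,w_m)$ and that $\cone_w(w_1,\ldots,w_m)$ is simplicial, with $w_1,\ldots,w_m$ distinct vertices joined to $w$ by an edge.

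Next I would reformulate the remaining condition. Writing $u-w=\sum_i\alpha_i(w_i-w)$ with $\alpha_i\geq 0$ (unique by simpliciality), the simplex $\conv(w,w_1,\ldots,w_m)$ is $\{\sum_i\alpha_i\leq 1\}$ and its facet $\conv(w_1,\ldots,w_m)$ opposite $w$ is $\{\sum_i\alpha_i=1\}$, so $u\notin\conv(w,w_1,\ldots,w_m)\smallsetminus\conv(w_1,\ldots,w_m)$ is equivalent to $\sum_i\alpha_i\geq 1$. The crucial point is that the coefficient sum of any nonnegative representation of $u-w$ in terms of the directions $\{x-w \mid x\in E_w\}$ dominates the Minkowski gauge $g(u-w)$ of the polytope $\conv(\{0\}\cup\{x-w \mid x\in E_w\})$; in particular $\sum_i\alpha_i\geq g(u-w)$ for the chosen cone, and indeed for every cone of the subdivision. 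Thus the lemma reduces to the single inequality $g(u-w)\geq 1$, that is, to showing that $u$ does not lie in the interior region $\{g(\cdot-w)<1\}$ of the ``ear'' $\conv(\{w\}\cup E_w)$ of $P$ at $w$.

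Finally, for this inequality I would pass to the dual description of the gauge: $g(u-w)\geq 1$ holds as soon as there is a linear functional $\phi$ with $\langle\phi,x-w\rangle\leq 1$ for all $x\in E_w$ and $\langle\phi,u-w\rangle\geq 1$. Since $u$ is a convex combination of $v,v_1,\ldots,v_n$, it suffices to produce $\phi$ with $\langle\phi,x-w\rangle\leq 1$ on $E_w$ and $\langle\phi,y-w\rangle\geq 1$ for each $y\in\{v,v_1,\ldots,v_n\}$; then $\langle\phi,u-w\rangle\geq 1$ by averaging. This is a weak separation of the edge-neighbors of $w$ from the simplex $\conv(v,v_1,\ldots,v_n)$, normalized so that $w$ sits strictly below the separating level; concretely it amounts to the statement that the two ears $\conv(\{w\}\cup E_w)$ and $\conv(v,v_1,\ldots,v_n)$ meet only along the face $\conv(E_w)$ opposite $w$, so that the interior region of the former avoids the latter. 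I expect constructing such a $\phi$ (equivalently, establishing this ear-disjointness) to be the main obstacle, the delicate point being the case where $w$ is itself a neighbor of $v$ not chosen among $v_1,\ldots,v_n$; here one must exploit precisely that $\conv(v,v_1,\ldots,v_n)$ uses only the selected edges at $v$ and avoids the edge toward $w$, which is exactly the force of the hypothesis $w\notin\{v,v_1,\ldots,v_n\}$.
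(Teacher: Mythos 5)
There is a genuine gap, and it sits exactly where you predicted the ``main obstacle'' would be: the inequality $g(u-w)\geq 1$ for the Minkowski gauge $g$ of $\conv(\{0\}\cup\{x-w \mid x\in E_w\})$ is \emph{false} in general, so the separating functional $\phi$ you want cannot exist. Your gauge $g$ is the \emph{minimum} weight function ($g(y)=\min\{\sum a_i\}$ over nonnegative representations $y=\sum a_i(x_i-w)$), and gauges are subadditive, so from $g(y_j-w)\geq 1$ at the vertices $y_j\in\{v,v_1,\ldots,v_n\}$ you only get $g(u-w)\leq\sum t_j\,g(y_j-w)$ --- the averaging step goes the wrong way. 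The paper's proof works precisely because it uses the \emph{maximum} weight function $W$, which is superadditive, so $W(u-w)\geq\sum t_j W(y_j-w)\geq 1$ does follow; and it then does \emph{not} take an arbitrary simplicial cone of an arbitrary triangulation, but the cone over the face of $\conv(E_w-w)$ visible from the origin that contains the normalized point $u_0$, since on that cone the barycentric coefficient sum realizes $W_{\max}$ rather than merely dominating $W_{\min}$. Your first stage is also affected by this: for a fixed $u$, some edge-direction triangulations of the tangent cone at $w$ produce cones on which the third condition genuinely fails, so the choice of cone cannot be left arbitrary.

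A concrete counterexample: let $P=\conv\bigl(w,z_1,z_2,z_3,z_4\bigr)$ with $w=(0,0,0)$, $z_1=(1,0,1)$, $z_2=(0,1,1)$, $z_3=(-1,0,1)$, $z_4=(0,-1,2)$. One checks all five points are vertices, the edges at $w$ go to $z_1,z_2,z_3,z_4$ (so $E_w=\{z_1,z_2,z_3,z_4\}$), and the vertex $v=z_2$ has $v_1=z_1$, $v_2=z_3$, $v_3=z_4$ among its edge-neighbors. Take $u=(0,0,1)=\tfrac12 z_1+\tfrac12 z_3\in\conv(v,v_1,v_2,v_3)$ and $w\notin\{v,v_1,v_2,v_3\}$ as required. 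Then $u-w=\tfrac13(z_2-w)+\tfrac13(z_4-w)$, so $g(u-w)=\tfrac23<1$: the simplex $\conv(v,v_1,v_2,v_3)$ meets the open ear at $w$, your two ears do \emph{not} meet only along $\conv(E_w)$, and no $\phi$ with $\langle\phi,x-w\rangle\leq 1$ on $E_w$ and $\langle\phi,u-w\rangle\geq 1$ can exist, since such a $\phi$ would give $\langle\phi,u-w\rangle\leq\tfrac13+\tfrac13=\tfrac23$. Note also that the triangulation of the tangent cone at $w$ splitting along $(z_2-w,z_4-w)$ yields the cone $\cone_w(z_1,z_2,z_4)$, which contains $u$ with coefficient sum $\tfrac23<1$, so the third condition fails for that choice, while the lemma still holds via the other (visible-face) triangulation, with $m=2$, $w_1=z_1$, $w_2=z_3$, and $u\in\conv(w_1,w_2)$. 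So the lemma is true only because \emph{some} cone works, and identifying it requires the maximum-weight/visible-face mechanism that your reduction discards.
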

\begin{proof}
We may assume that the ambient space is $\mathbb{R}^n$ and that $w$ is the origin.
Let $w_1,\ldots,w_k$ be the distinct vertices of $P$ that are connected to $w$ by an edge.
Let $W:\cone(w_1,\ldots,w_k) \rightarrow \mathbb{R}$ be maximum weight function defined by 
\[W(x) := \max\left\{ \left. \sum_{i=1}^{k}a_{i} \,  \right| \, x = \sum_{i=1}^{k}a_{i}w_i \textnormal{ and } a_i \geq 0 \textnormal{ for all $i$} \right\}.
\]
Clearly $W(\lambda x)=\lambda W( x)$ for any $\lambda \geq 0$. 
Let $Q=\conv(w_1,\ldots,w_k)$.
Notice that for any $t >0$, the set $tQ$ consists precisely of the points $x$ in $\cone(w_1,\ldots,w_k)$ that can be written as $x=\sum_{i=1}^{k}a_{i}w_i$ with $a_i \geq 0$ for all $i$ and $\sum_{i=1}^{k}a_{i}=t$. 
Let us refer to the faces of $Q$ visible from $w$ as the lower faces of $Q$.
The lower faces of $Q$ consist precisely of the points that are not in $tQ$ for any $t>1$.
Then, the function $W$ is identically 1 on the lower faces of $Q$.
It follows that $W$ is a piecewise linear function on $\conv(w_1,\ldots,w_k)$ which is linear precisely on the cones over the lower faces of $Q$.

We claim that $W(z) \geq 1$ for any vertex $z$ of $P$ distinct from $w$.
By contradiction, assume that $W(z) < 1$. 
Let $z_0$ be the unique point in the ray $\overrightarrow{wz}$ such that $W(z_0)=1$. 
Then, $z_0$ is in the lower faces of $Q$. In particular $z_0 \in  Q$, and hence $z_0 \in P$.
Since the line segment $\overline{wz_0}$ is contained in $P$ and $z$ is in its relative interior, we deduce that $z$ cannot be a vertex of $P$, which is a contradiction.

Clearly $W(x+y) \geq W(x)+W(y)$, for any $x,y \in \cone(w_1,\ldots,w_k)$.
Since $u \in \conv(v,v_1,\ldots,v_n)$, and $W(v)\geq 1$ and $W(v_i)\geq 1$ for each $i$, it follows that $W(u) \geq 1$.
Let $u_0$ be the unique point in the ray $\overrightarrow{wu}$ such that $W(u_0)=1$.
Let $F$ be the unique face of $Q$ containing $u_0$ in its relative interior.
Notice that $F$ is a lower face of $Q$. 
The vertices of $Q$ are precisely $w_1,\ldots,w_k$. 
Hence, by relabeling we may assume the vertices of $F$ are $w_1,\ldots,w_r$, for some $r \leq k$.
Let $m$ be the positive integer such that the dimension of $F$ is $m-1$.
After relabeling if necessary, we may assume that $w_1,\ldots,w_{m}$ are such that $\conv(w_1,\ldots,w_{m})$ is $m-1$ dimensional and contains $u_0$.
We know that the function $W$ is linear on the cone over $F$ and hence also on $\cone(w_1,\ldots,w_{m})$.

By construction $\cone_w(w_1,\ldots,w_{m})=\cone(w_1,\ldots,w_{m})$ is simplicial and it contains $u$ since $u_0 \in \cone(w_1,\ldots,w_{m})$.
Moreover, since $W$ takes values strictly less than $1$ on $\conv(w,w_1,\ldots,w_{m}) \smallsetminus \conv(w_1,\ldots,w_{m})$,
then $u \notin \conv(w,w_1,\ldots,w_{m}) \smallsetminus \conv(w_1,\ldots,w_{m})$,  
as desired. 
\end{proof}


We now present a condition for the generation of jets along two or more $T$-invariant points.

\begin{proposition}\label{proposition.points}
(Generation of jets along two or more $T$-invariant points)
Let $D$ be an ample Cartier $T$-divisor 
on a projective toric variety $X=X(\Delta)$.   
Let $x_1,\ldots,x_r \in X$ be distinct $T$-invariant points 
and let $\sigma_1,\ldots,\sigma_r \in \Delta$ be their corresponding maximal cones, where $r \geq 2$. 
If for some $k \in \mathbb{Z}_{\geq 0}$, 
$L_{\sigma_i} \geq k + \Gamma_{\sigma_i^{\vee}}$, for each $1 \leq i \leq r$,  then for any positive integers $k_1,\ldots,k_r$ such that $\sum_{i=1}^{r} k_i=k+1$, the evaluation map 
\[H^0(X, \mathcal{O}_X(D))\rightarrow H^0(X, \mathcal{O}_X(D)\otimes \CO_X/(m_{x_1}^{k_1}\otimes m_{x_2}^{k_2}\otimes\cdots\otimes m_{x_r}^{k_r}))\]
is surjective. 
\end{proposition}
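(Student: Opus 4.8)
The plan is to reduce the statement to a family of single-point problems and then, for each, exhibit an explicit monomial section. Since the points $x_1,\ldots,x_r$ are distinct, the sheaf $\CO_X/(m_{x_1}^{k_1}\otimes\cdots\otimes m_{x_r}^{k_r})$ is supported on the $x_i$ and splits as a direct sum of skyscrapers, so the target space decomposes as $\bigoplus_{i=1}^{r}V_i$ with $V_i=H^0(X,\CO_X(D)\otimes\CO_X/m_{x_i}^{k_i})$. Hence it suffices to prove that, for each fixed $i$, every element of $V_i$ is the image of a global section that in addition vanishes to order at least $k_j$ at every $x_j$ with $j\neq i$; summing over $i$ then gives surjectivity onto the whole direct sum. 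By Remark~\ref{remark.maximal.ideal.powers} (with $k_i$ playing the role of $k+1$), the space $V_i$ is spanned by the images of the monomials $\chi^u$ with $\chi^{u-u_{\sigma_i}}\notin m_{R_i}^{k_i}$, where $R_i=\mathbbm{k}[\cone(P_D,u_{\sigma_i})\cap M]$, so I only need to produce, for each such $u$, one global section realizing the jet of $\chi^u$ at $x_i$ while vanishing at the remaining points.

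For such a $u$, Lemma~\ref{lemma.generators} forces $W^{\sigma_i^\vee}_{\max}(u-u_{\sigma_i})\le k_i-1+\Gamma_{\sigma_i^\vee}$, and since $r\ge 2$ and $\sum_i k_i=k+1$ we have $k_i\le k$, so this weight is at most $k-1+\Gamma_{\sigma_i^\vee}<k+\Gamma_{\sigma_i^\vee}\le L_{\sigma_i}$. The convexity computation in the proof of Proposition~\ref{proposition.onept} then applies verbatim and yields $u\in P_D$, so $\chi^u$ is itself a global section whose image in $V_i$ is the desired jet. It therefore remains only to check that this same monomial $\chi^u$ vanishes to order at least $k_j$ at each $x_j$, $j\neq i$; by Lemma~\ref{lemma.generators} this follows once one shows $W^{\sigma_j^\vee}_{\max}(u-u_{\sigma_j})>k_j-1+\Gamma_{\sigma_j^\vee}$.

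Establishing this last weight inequality is the core of the argument, and I would split it according to the position of $u_{\sigma_j}$ relative to $u_{\sigma_i}$. If $u$ can be written as a convex combination of $u_{\sigma_i}$ and $n$ vertices adjacent to it that avoid $u_{\sigma_j}$, then Lemma~\ref{lemma.non.adjacent} produces a simplicial cone at $u_{\sigma_j}$ on which its combinatorial weight function takes value at least $1$ at $u-u_{\sigma_j}$; rescaling the edge generators to primitive ones, each edge having lattice length at least $L_{\sigma_j}$, converts this into $W^{\sigma_j^\vee}_{\max}(u-u_{\sigma_j})\ge L_{\sigma_j}\ge k+\Gamma_{\sigma_j^\vee}>k_j-1+\Gamma_{\sigma_j^\vee}$. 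The genuinely harder case is when $u_{\sigma_j}$ is joined to $u_{\sigma_i}$ by an edge that $u$ actually uses. Writing $\ell\ (\ge L_{\sigma_j})$ for the lattice length of that edge and decomposing $u-u_{\sigma_i}$ into $s:=k^{R_i}_{u-u_{\sigma_i}}\le k_i-1$ irreducible elements of $\sigma_i^\vee\cap M$, each such element has coefficient at most $1$ along the shared edge direction; pushing these across the edge produces a nonnegative representation of $u-u_{\sigma_j}$ in $\sigma_j^\vee$ of weight at least $\ell-s\ge L_{\sigma_j}-(k_i-1)$. The constraint $\sum_i k_i=k+1$, giving $k_i+k_j\le k+1$, then upgrades this to $W^{\sigma_j^\vee}_{\max}(u-u_{\sigma_j})\ge (k+1-k_i)+\Gamma_{\sigma_j^\vee}\ge k_j+\Gamma_{\sigma_j^\vee}$, as required.

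The main obstacle is precisely this adjacent-edge case. The crude bound coming from $W^{\sigma_i^\vee}_{\max}(u-u_{\sigma_i})\le k_i-1+\Gamma_{\sigma_i^\vee}$ loses the term $\Gamma_{\sigma_i^\vee}$ and becomes too weak once the cones are singular, so instead one must exploit the finer fact that the actual order of vanishing at $x_i$ is at most $k_i-1$, together with the observation that every Hilbert-basis element of a simplicial cone has coefficient at most $1$ along each generator; the global hypothesis $\sum_i k_i=k+1$ is what closes the gap. I would also invoke the standard reduction used in Remark~\ref{remark.piecewise.linear} and in the proof of Lemma~\ref{lemma.gamma.cone} to pass from an arbitrary cone $\sigma_j^\vee$ to a simplicial subcone, so that the shared-facet structure of adjacent cones becomes available. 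Once all $j\neq i$ are treated, the single section $\chi^u$ maps to the prescribed jet in $V_i$ and to $0$ in every other $V_j$, which completes the reduction and hence the proof.
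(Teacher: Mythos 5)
Your global architecture coincides with the paper's: the direct-sum decomposition of the target, the monomial spanning of each factor via Remark~\ref{remark.maximal.ideal.powers}, the observation that each relevant $\chi^u$ is already a global section because $u\in P_D$ (the convexity computation of Proposition~\ref{proposition.onept}), the reduction to showing $\chi^{u-u_{\sigma_j}}\in m_j^{k_j}$ for $j\neq i$, and the non-adjacent case via Lemma~\ref{lemma.non.adjacent} are all exactly what the paper does. The divergence, and the genuine gap, is in the adjacent-edge case. Write $u_{\sigma_j}-u_{\sigma_i}=\ell w$ with $w$ primitive, and decompose $u-u_{\sigma_i}=\sum_t z_t$ into irreducibles with $z_t=c_tw+z_t'$, $c_t\le 1$. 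The resulting identity $u-u_{\sigma_j}=\sum_t z_t'+\bigl(\ell-\sum_t c_t\bigr)(-w)$ gives \emph{no} lower bound on $W^{\sigma_j^\vee}_{\max}(u-u_{\sigma_j})$: by Definition~\ref{definition.weight.function}, the maximum weight only sees expressions of $u-u_{\sigma_j}$ as nonnegative combinations of the primitive ray generators of $\sigma_j^\vee=\cone(P_D,u_{\sigma_j})$, and your residual vectors $z_t'$ live in the cone spanned by the \emph{other} edge directions at $u_{\sigma_i}$, which in general is not contained in $\cone(P_D,u_{\sigma_j})$. Already for $P_D=\conv\bigl((0,0),(\ell,0),(0,\ell)\bigr)$ with $u_{\sigma_i}=(0,0)$ and $u_{\sigma_j}=(\ell,0)$, the other edge direction $(0,1)$ at $u_{\sigma_i}$ does not lie in $\cone\bigl((-1,0),(-1,1)\bigr)=\cone(P_D,u_{\sigma_j})$. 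So the key inequality $W^{\sigma_j^\vee}_{\max}(u-u_{\sigma_j})\ge \ell-s$ is unproven, and it is in fact a far stronger assertion than what is needed: Lemma~\ref{lemma.generators} is a one-way implication (large weight implies ideal membership), so membership $\chi^{u-u_{\sigma_j}}\in m_j^{k_j}$ does not require, and need not come with, any weight lower bound at $u_{\sigma_j}$.

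The paper sidesteps this entirely: instead of transferring $\ell-\sum_t c_t$ units along the edge and trying to certify a weight at $u_{\sigma_j}$, it transfers only $k_j$ units and proves the polytope membership $u+k_jw\in P_D$, working wholly inside the simplicial cone $Q$ at $u_{\sigma_i}$ that contains $u-u_{\sigma_i}$. Concretely, from $W^{Q}(u-u_{\sigma_i})\le k_i-1+\Gamma_{\sigma_i^\vee}$ and $l\ge L_{\sigma_i}\ge k+\Gamma_{\sigma_i^\vee}$ one gets $l-W^{Q}(u-u_{\sigma_i})\ge k+1-k_i\ge k_j$ (note the $\Gamma_{\sigma_i^\vee}$ you feared losing cancels in this difference), whence $(u-u_{\sigma_i})+k_jw\in\bigl(W^{Q}(u-u_{\sigma_i})+k_j\bigr)P'\subseteq lP'\subseteq\conv\bigl(0,v_1-u_{\sigma_i},\ldots,v_n-u_{\sigma_i}\bigr)$, so $u+k_jw\in\conv(u_{\sigma_i},v_1,\ldots,v_n)\subseteq P_D$ and then $\chi^{u-u_{\sigma_j}}=\chi^{(u-u_{\sigma_j})+k_jw}\,(\chi^{-w})^{k_j}\in m_j^{k_j}$ directly, using only $\chi^{-w}\in m_j$. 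Your count $s$ of irreducible summands, the per-summand coefficient bound, and the ``pushing across the edge'' do not obviously recover this membership, and any repair of your adjacent case along these lines essentially collapses back to the paper's argument; as it stands, the adjacent-edge step of your proof does not go through.
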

\begin{proof}
We let $n$ denote the dimension of $X$.    
For each $1 \leq i \leq r$, let $u_{i}$ be the vertex of the polytope $P_D$ corresponding to $\sigma_i$. 
We have an isomorphism
\[
H^0(X, \mathcal{O}_X(D)\otimes \CO_X/(m_{x_1}^{k_1}\otimes m_{x_2}^{k_2}\otimes\cdots\otimes m_{x_r}^{k_r}))
\cong
\bigoplus_{i=1}^r H^0(X, \mathcal{O}_X(D)\otimes \CO_X/m_{x_i}^{k_i}).
\]
The set $\{\chi^{u}\,|\,u\in P_D\cap M\}$ is a basis of 
$H^0(X, \mathcal{O}_X(D))$.
For each $1 \leq i \leq r$, let $R_i=\mathbbm{k}[\cone(P_D,u_{i}) \cap M]$ and let $m_i$ be its irrelevant maximal ideal.
For each $1 \leq i \leq r$, by Remark~\ref{remark.maximal.ideal.powers} the space $H^0(X, \mathcal{O}_X(D)\otimes \CO_X/m_{x_i}^{k_i})$ is spanned over $\mathbbm{k}$ by the images of the sections $\chi^u$ such that $\chi^{u-u_i} \in R_i \smallsetminus m_{i}^{k_i}$.
For each $1 \leq i \leq r$, by Proposition~\ref{proposition.onept}, $u \in P_D$ for all such sections.
We claim now that if we fix distinct indices $i$ and $j$, all sections $\chi^u \in H^0(X, \mathcal{O}_X(D))$ such that $\chi^{u-u_i} \in R_i \smallsetminus m_{i}^{k_i}$, map to zero under the composition
\[
H^0(X, \mathcal{O}_X(D))
\rightarrow
H^0(X, \mathcal{O}_X(D)\otimes \CO_X/(m_{x_1}^{k_1}\otimes m_{x_2}^{k_2}\otimes\cdots\otimes m_{x_r}^{k_r}))
\rightarrow 
H^0(X, \mathcal{O}_X(D)\otimes \CO_X/m_{x_j}^{k_j}). 
\]
The surjectivity of the evaluation map in the statement clearly follows from this claim.
To establish the claim, it is enough to fix distinct indices $i$ and $j$, and some $u$ in $P_D$ such that $\chi^{u-u_i} \in R_i \smallsetminus m_{i}^{k_i}$ and show that $\chi^{u-u_j} \in m_j^{k_j} \subseteq R_j$.
Let us choose distinct vertices $v_1,\ldots,v_n$, each connected to $u_i$ by an edge of $P_D$ such that $Q=\cone(v_1-u_i,\ldots,v_n-u_i)$ is simplicial and contains $u-u_i$.
Let $R_Q=\mathbbm{k}[Q \cap M]$ and let $m_Q$ be its irrelevant maximal ideal.
Since $\chi^{u-u_i} \in R_i \smallsetminus m_{i}^{k_i}$, then by Lemma~\ref{lemma.generators}, 
\begin{equation}  \label{bound.weight}
W^Q(u-u_i) \leq W_{\max}^{\cone(P_D,u_{i})}(u-u_i) \leq k_i-1+ \Gamma_{\sigma_i^{\vee}}
\leq k + \Gamma_{\sigma_i^{\vee}} \leq L_{\sigma_i}.
\end{equation}
Since each of the vectors $v_1-u_i,\ldots,v_n-u_i$ has lattice length at least $L_{\sigma_i}$ and $W^Q(u-u_i) \leq L_{\sigma_i}$, 
then $u \in \conv(u_i,v_1,\ldots,v_n)$.
We now consider two cases depending on whether $u_j$ is one of the vertices $\{v_1,\ldots,v_n \}$.

Assume that $u_j \notin \{v_1,\ldots,v_n \}$.
In this case, by Lemma~\ref{lemma.non.adjacent} there exist $\{w_1,\ldots,w_{m} \}$ distinct vertices of $P_D$ each connected to $u_j$ by an edge, such that 
$u \in \cone_{u_j}(w_1,\ldots,w_{m})$, 
$\cone_{u_j}(w_1,\ldots,w_{m})$ is simplicial, 
and $u \notin \conv(u_j,w_1,\ldots,w_{m}) \smallsetminus \conv(w_1,\ldots,w_{m})$.
We set $Q'=\cone(w_1-u_j,\ldots, \linebreak[0] w_{m}-u_j)$. 
By definition of $L_{\sigma_j}$, the vectors $w_1-u_j,\ldots,w_{m}-u_j$ have lattice length at least $L_{\sigma_j}$.
It follows that 
\[
W_{\max}^{\cone(P_D,u_{j})}(u-u_j)
\geq W^{Q'}(u-u_j) \geq L_{\sigma_j} \geq k + \Gamma_{\sigma_j^{\vee}} > k - 1 + \Gamma_{\sigma_j^{\vee}}.
\] 
Therefore, $\chi^{u-u_j} \in m_j^{k_j} \subseteq R_j$ by Lemma~\ref{lemma.generators}, as desired.

Assume now that $u_j \in \{v_1,\ldots,v_n \}$.
Let $w$ be the primitive vector in the direction from $u_i$ to $u_j$. Notice that $\chi^{-w} \in m_j \subseteq R_j$.
To finish the proof, it is enough to show that $u+k_jw$ is in $P_D$.  %
Indeed, this implies that $\chi^{(u-u_j)+k_jw} \in R_j$, and hence 
$\chi^{u-u_j}=\chi^{(u-u_j)+k_jw}(\chi^{-w})^{k_j} \subseteq m_j^{k_j} \subseteq R_j$, as desired to complete the proof.
Let $l$ be the minimum of the lattice lengths of the vectors $v_1-u_i,\ldots,v_n-u_i$.
We saw in (\ref{bound.weight}) that $k_i-1+ \Gamma_{\sigma_i^{\vee}} \geq W^Q(u-u_i)$ and notice that $l \geq L_{\sigma_i} \geq k + \Gamma_{\sigma_i^{\vee}}$,
then   
\[
l-W^Q(u-u_i) 
\geq (k + \Gamma_{\sigma_i^{\vee}}) - (k_i-1+ \Gamma_{\sigma_i^{\vee}}) 
= k+1 - k_i\geq k_j.
\]
Let $v_1',\ldots,v_n'$ be the primitive integer vectors in the directions $v_1-u_i,\ldots,v_n-u_i$ and let $P'=\conv(0,v_1',\ldots,v_n')$.
Notice that $u-u_i \in W^Q(u-u_i)P'$   
and that $w \in \{v_1',\ldots,v_n' \} \subseteq P'$. 
Therefore, 
\begin{align*}
    (u-u_i)+k_jw & \in W^Q(u-u_i)P'+k_jP'  = (W^Q(u-u_i)+k_j)P' \subseteq  lP'  \\   
    & = \conv(0,lv_1',\ldots,lv_n') \subseteq \conv(0,v_1-u_i,\ldots,v_n-u_i).
\end{align*}
Then, $u+k_jw \in \conv(u_i,v_1,\ldots,v_n) \subseteq P_D$, and this completes the proof.
\end{proof}

\subsection{Putting it all together: Proofs of the main results on jet ampleness in  Section~\ref{subsection.theorems}}

\begin{proof}[Proof of Theorem~\ref{theorem.jet.ampleness.technical}]
By Proposition~\ref{proposition.invariant.enough}, we are reduced to show that $D$ generates $k$-jets on any $Z \subseteq X$ supported at $T$-invariant points.
Under the given assumptions, the generation of $k$-jets supported at one $T$-invariant point is proved in Proposition~\ref{proposition.onept} and  the generation of $k$-jets supported at two or more $T$-invariant points is proved in Proposition~\ref{proposition.points}. The desired conclusion follows.
\end{proof}

\begin{proof}[Proof of Theorem~\ref{theorem.jet.ampleness.gamma}]
The equivalence of conditions (\ref{thm.condition.intersection}), (\ref{thm.condition.length}), (\ref{thm.condition.concavity}) and { (\ref{thm.condition.seshadri})} is shown in Proposition~\ref{proposition.triple.equivalence}. 
For any $\sigma \in \Delta$ we have that $\Gamma_X \geq \Gamma_{\sigma^\vee}$ and 
that $L_\sigma \geq k + \Gamma_X$ by (\ref{thm.condition.length}). Hence, $L_{\sigma} \geq k + \Gamma_{\sigma^{\vee}}$, and then $D$ is $k$-jet ample by Theorem~\ref{theorem.jet.ampleness.technical}.
\end{proof}

\begin{proof}[Proof of Corollary~\ref{corollary.jet.ampleness.k+n-2}]
The equivalence of conditions (\ref{corollary.condition.intersection}), (\ref{corollary.condition.length}),  (\ref{corollary.condition.concavity}) and  {   (\ref{corollary.condition.seshadri})} is shown in Proposition~\ref{proposition.triple.equivalence}. 
By Lemma~\ref{lemma.gamma.cone}, in this case $0 \leq \Gamma_X \leq n-2$, and then the conclusion follows from Theorem~\ref{theorem.jet.ampleness.gamma}. The bound is sharp by Example~\ref{example.bounds.sharp} below. 
\end{proof}

\begin{proof}[Proof of Theorem~\ref{theorem.jet.ampleness.multiple}]
The $k$-jet ampleness of $mD$ follows from Example~\ref{example.products}.  
The bound is sharp by Example~\ref{example.bounds.sharp} below. 
\end{proof}



\section{Examples}    \label{section.examples}

In this section, we show that the bounds in Corollary~\ref{corollary.jet.ampleness.k+n-2} and Theorem~\ref{theorem.jet.ampleness.multiple} are sharp and present some sufficient conditions for $k$-jet ampleness on weighted projective spaces. 

\subsection{Sharpness of the bounds in Corollary~\ref{corollary.jet.ampleness.k+n-2} and Theorem~\ref{theorem.jet.ampleness.multiple}}


For $n\geq 2$, we will construct a projective toric variety $X(\Delta)$ with an ample $T$-invariant Cartier divisor $D$ such that the divisor $(k+n-3)D$ does not separate $k$-jets at one invariant point, for each $k \geq 1$. The construction is inspired by an example in \cite{EW91}. 

\begin{example}    \label{example.bounds.sharp}
Let $M=\mathbb{Z}^n$ for some $n \geq 2$ and let $\{e_1,\ldots,e_n\}$ be the its standard basis. Let $r$ be a positive integer such that $r>n-2$ and $a=e_1+\cdots+e_{n-1}+re_n\in M$. Let $P$ be the convex hull of $0,e_1,\ldots, e_{n-1}, a$. Let $X$ be the associated toric variety and $D$ be the Cartier $T$-invariant divisor such that $P_D=P$. 

Let $\sigma$ be the maximal cone corresponding to the vertex $0$ of $P$ and $x$ be the corresponding closed $T$-invariant point. We will show that the Cartier divisor $G:=(k+n-3)D$ does not separate $k$-jets at $x$ for any $k\geq 1$. 
Notice that $L^G_{\sigma}=k+n-3$. Let $Q$ be the dual cone $\sigma^\vee$. We have that $Q$ is generated by $e_1,\ldots,e_{n-1}, a$ in $M\otimes_{\mathbb{Z}}\mathbb{R}$. Then the affine coordinate ring of $U_\sigma$, denoted by $R$, is $\mathbbm{k}[Q\cap \mathbb{Z}^n]$ and its irrelevant ideal $m$ is the maximal ideal of $x$.

Let $a_i=e_1+\cdots+e_{n-1}+ie_n$ for $1\le i\le r$. In particular, $a_r=a$. 
Since \[
S_Q \cap M= \{ {\textstyle \sum_{i=1}^{n-1} } \alpha_{i}e_i+ \alpha_n a \, | \, 0 \leq \alpha_i < 1 \textnormal{ for all $1 \leq i \leq n$} \} \cap M =\{ 0, a_1, \ldots, a_{r-1} \},
\]
it follows that the maximal ideal $m$ is generated by the monomials 
$\{\chi^{e_1}, \ldots , \chi^{e_{n-1}}, \chi^{a_1}, \ldots, \chi^{a_r}\}$.

Let $u=(k-1)e_1+a_1$. This implies that $\chi^u\in m^k$. We will show that $$\chi^u\in m^k\smallsetminus m^{k+1},$$ and $u$ is not in the polytope $P_G=P_{(k+n-3)D}$. This implies that the map \[H^0(X,\mathcal{O}_X(G))\rightarrow H^0(X, \mathcal{O}_X(G)\otimes \mathcal{O}_X/m_x^{k+1})\] is not surjective, and hence that $G$ is not $k$-jet ample. 
\begin{itemize}[leftmargin=*]
    \item[-] Let us show that $u$ is not in the polytope $P_{G}$. We first calculate the weight of $u$ in $Q$. Note that
\[a_1=e_1+e_2+\cdots+e_n={\textstyle \sum_{i=1}^{n-1}}(1-\frac{1}{r})e_i+\frac{1}{r}a.
\]
Hence
$u=ke_1+e_2+\cdots+e_n=(k-1)e_1+\sum_{i=1}^{n-1}(1-\frac{1}{r})e_i+\frac{1}{r}a$.
We thus have 
\[W^{Q}_{\max}(u)=(k-1)+(1-\frac{1}{r})(n-1)+\frac{1}{r}=n+k-2-\frac{n-2}{r}>  
n+k-3.
\]
Recall that the polytope $P_G$ is the convex hull of $\{0,(k+n-3)e_1,\ldots, (k+n-3)e_{n-1}, (k+n-3)a\}$. In particular, for every lattice point $w$ in $P_G$, we have $W^{Q}(w)\le k+n-3$. Then, $u$ is not in $P_{G}$. 

\item[-]
Now we show that $\chi^u \notin m^{k+1}$. Assume that $\chi^u\in m^l$ for some positive integer $l$. We write $u=\sum_{i=1}^{l}w_i$ for some nonzero lattice points $w_i$ in $Q$. 
Each lattice point in $Q$ has the form 
\[{\textstyle 
\sum_{i=1}^{n-1}c_ie_i+\sum_{j=1}^{r}d_ja_j   }
\]
for some integers $c_i\ge 0$ and $d_j\ge 0$.   
Since the $n$-th coordinate of $u$ is $1$, there exists exactly one $w_i$ with positive $n$-th coordinate. We may assume that it is $w_1$. Hence $w_1$ has the form 
$\sum_{i=1}^{n-1}c_ie_i+a_1$
and all the other $w_i$ are sums of nonnegative integer multiples of $e_1, e_2, \ldots, e_{n-1}$. We thus have $w_1-a_1\in Q$ and  \[(k-1)e_1=u-a_1=(w_1-a_1)+{\textstyle \sum_{i=2}^{l}w_i}.\]
Hence each $w_i$ is a multiple of $e_1$. This implies that $l-1\le k-1$. Hence $l\le k$ and $\chi^u \notin m^{k+1}$. 
\end{itemize}

Let us consider the notion of higher concavity of piecewise linear functions on fans in Definition~\ref{definition.concavity} and 
its equivalences in Proposition~\ref{proposition.triple.equivalence}.  
The function $\psi_D$ in this example is $1$-concave because the lattice length of any edge of $P_D$ is exactly equal to $1$. 
This implies that  $\psi_{G}$ is $(k+n-3)$-concave. 
Equivalently, $G\cdot C\ge k+n-3$ holds for each complete $T$-invariant curve $C$ in $X$. This shows that Corollary~\ref{corollary.jet.ampleness.k+n-2} is sharp. Since $G$ is equal to the $k+n-3$ multiple of an ample divisor $D$, we deduce that Theorem~\ref{theorem.jet.ampleness.multiple} is sharp. In fact, Theorem~\ref{theorem.jet.ampleness.technical} is sharp asymptotically. Here
\begin{align*}
\Gamma_{\sigma^\vee}
&=\max\limits_{1 \leq i \leq r}
\left\{ W^Q(a_i)-k_{a_i} \right\}
=\max\limits_{1 \leq i \leq r}
\left\{ (n-1)(1-\frac{i}{r})+\frac{i}{r}-1 \right \}     
=n-2-\frac{n-2}{r}.
\end{align*}
when $r$ is large, $k+\Gamma_{\sigma^\vee}=k+n-2-\frac{n-2}{r}$ has the limit value equal to $k+n-2$. This shows that the $k+\Gamma_{\sigma^\vee}$ is indeed the optimal lower bound of $L_\sigma$ in Theorem~\ref{theorem.jet.ampleness.technical}. 
\end{example}

\subsection{Example: Weighted projective spaces}

\begin{theorem}    \label{theorem.example.weighted.projective.spaces}
Let $\mathbb{P}=\mathbb{P}(a_0,\ldots,a_{n})$ be a weighted projective space with $n\ge 2$, with weights reduced as usual to satisfy $\gcd(a_0,\ldots, \widehat{a_j}, \ldots,a_{n})=1$, for each $0 \leq j \leq n$.
Let $l=\lcm(a_0,\ldots,a_{n})$ and $h=\max\{\lcm(a_i,a_j)\ | \ 0 \leq i < j \leq n \}$.
Then, any ample Cartier divisor $D$ on $\mathbb{P}$ is $k$-jet ample for any  
\[
0 \leq k \leq \frac{l}{h} -n +2.
\]
\end{theorem}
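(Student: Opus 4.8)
The plan is to reduce the statement to the lattice-length criterion of Corollary~\ref{corollary.jet.ampleness.k+n-2}, condition (\ref{corollary.condition.length}), by computing explicitly the edges of the polytope of an arbitrary ample Cartier divisor on $\mathbb{P}$. First I would fix the standard toric presentation of the well-formed weighted projective space: take $N=\mathbb{Z}^{n+1}/\mathbb{Z}(a_0,\ldots,a_n)$, so that $M=\{u\in\mathbb{Z}^{n+1}\mid \sum_i a_iu_i=0\}$, the primitive ray generators are the images $v_0,\ldots,v_n$ of the standard basis vectors, the maximal cones are $\sigma_j=\cone(v_i\mid i\neq j)$ for $0\le j\le n$, and the pairing satisfies $\langle u,v_i\rangle=u_i$. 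Here the hypothesis $\gcd(a_0,\ldots,\widehat{a_j},\ldots,a_n)=1$ is exactly what guarantees that the $v_i$ are primitive and that this is the correct fan. The class group is $\mathbb{Z}$, and I would write a given $T$-divisor as $D=\sum_i d_iD_i$, of degree $c=\sum_i a_id_i$.

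Next I would describe $P_D$ and determine when $D$ is Cartier. The vertex $u_j$ of $P_D$ corresponding to $\sigma_j$ is the unique $u_j\in M_{\mathbb{R}}$ with $\langle u_j,v_i\rangle=-d_i$ for all $i\neq j$; imposing the relation $\sum_i a_i(u_j)_i=0$ gives $(u_j)_i=-d_i$ for $i\neq j$ and $(u_j)_j=c/a_j-d_j$. Thus $u_j$ lies in $M$ if and only if $a_j\mid c$, so $D$ is Cartier if and only if $a_j\mid c$ for every $j$, that is, if and only if $l=\lcm(a_0,\ldots,a_n)$ divides $c$. In particular every ample Cartier divisor has degree a positive multiple of $l$, hence $c\ge l$.

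The main computation, and the step I expect to be the crux, is the lattice length of the edge $\overline{u_iu_j}$ of the simplex $P_D$. From the vertex formula one reads off $u_i-u_j=\frac{c}{a_i}e_i-\frac{c}{a_j}e_j$, while the primitive lattice vector in this direction is $\frac{a_j}{g}e_i-\frac{a_i}{g}e_j$ with $g=\gcd(a_i,a_j)$. To justify primitivity in $M$ I would use that $M$, being the kernel of the integral linear functional $u\mapsto\langle u,(a_0,\ldots,a_n)\rangle$, is saturated in $\mathbb{Z}^{n+1}$, so that a vector primitive in $\mathbb{Z}^{n+1}$ is primitive in $M$; and $\frac{a_j}{g}e_i-\frac{a_i}{g}e_j$ is primitive in $\mathbb{Z}^{n+1}$ since $\gcd(a_j/g,a_i/g)=1$. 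Comparing the two expressions gives that $\overline{u_iu_j}$ has lattice length $\frac{c\,g}{a_ia_j}=c/\lcm(a_i,a_j)$. (This equals the intersection number $D\cdot C_{ij}$ with the invariant curve $C_{ij}$, so one could equally invoke condition (\ref{corollary.condition.intersection}).)

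Finally I would conclude. The smallest edge length of $P_D$ is $c/\max_{0\le i<j\le n}\lcm(a_i,a_j)=c/h\ge l/h$. If $0\le k\le \frac{l}{h}-n+2$, then every edge of $P_D$ has lattice length at least $l/h\ge k+n-2$, so $D$ is $k$-jet ample by Corollary~\ref{corollary.jet.ampleness.k+n-2}. The only points requiring care are the saturation/primitivity claim feeding into the length computation and the bookkeeping showing that an ample Cartier divisor forces $l\mid c$; everything else is the convex-geometric input already packaged in Corollary~\ref{corollary.jet.ampleness.k+n-2}.
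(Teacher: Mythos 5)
Your proof is correct and follows the same overall strategy as the paper: both reduce the theorem to the lattice-length criterion of Corollary~\ref{corollary.jet.ampleness.k+n-2}(\ref{corollary.condition.length}), and both rest on the computation that the edges of the relevant polytope have lattice length $(\text{degree})/\lcm(a_i,a_j)$. The difference lies in how this is grounded. The paper reduces to a generator $D_0$ of $\operatorname{Pic}(\mathbb{P})$, invokes the known model of $P_{D_0}$ as the simplex $Q$ with vertices $(l/a_i)e_i$ in the hyperplane $\{\sum_i a_ix_i=l\}$ with lattice $\Lambda\cap\mathbb{Z}^{n+1}$ (citing \cite{RT12}), reads off the edge lengths $l/\lcm(a_i,a_j)$ there, and handles a general ample Cartier divisor via $P_{rD_0}=rP_{D_0}$. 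You instead work in the fan presentation $N=\mathbb{Z}^{n+1}/\mathbb{Z}(a_0,\ldots,a_n)$ and derive the needed facts from scratch: the Cartier criterion $l\mid c$ (which replaces the citation and identifies $\operatorname{Pic}(\mathbb{P})$ as $l\mathbb{Z}\subseteq\operatorname{Cl}(\mathbb{P})=\mathbb{Z}$, so in particular $c\geq l$ for ample Cartier $D$), the vertex coordinates $u_j$, and the edge lengths $c/\lcm(a_i,a_j)$ for arbitrary degree $c$; your appeal to the saturation of $M=\ker(u\mapsto\sum_i a_iu_i)$ in $\mathbb{Z}^{n+1}$ is exactly the right justification for the primitivity of $\frac{a_j}{g}e_i-\frac{a_i}{g}e_j$, and your lattice picture is isomorphic (up to translation) to the paper's hyperplane model, with $c=l$ recovering their computation. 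So your route is self-contained and marginally more general, at the cost of length; the paper's is shorter but leans on an external reference. One cosmetic point: since Corollary~\ref{corollary.jet.ampleness.k+n-2} is stated for $T$-divisors and the theorem concerns arbitrary ample Cartier divisors, you should say explicitly (as in Example~\ref{example.jet.ampleness.curves}, or as the paper does via ``linearly equivalent to $rD_0$'') that $k$-jet ampleness depends only on the linear equivalence class, since you silently assume $D=\sum_i d_iD_i$ is $T$-invariant.
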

\begin{proof}
It is enough to show that the lattice length of any edge of the polytope $P_{D_0}$ corresponding to an ample $T$-divisor $D_0$ generating $\operatorname{Pic}(\mathbb{P})$ is at least $l/h$.
Indeed, any Cartier divisor is linearly equivalent to $rD_0$ for some $r \in \mathbb{Z}^+$, and $P_{rD_0}=rP_{D_0}$, so the lattice lengths of the edges of $P_{rD_0}$ are at least $r\cdot l/h \geq l/h \geq k+n-2$, and the desired conclusion would follow by Corollary~\ref{corollary.jet.ampleness.k+n-2}.
Consider the polytope $Q$ with vertices $(l/a_0,0,\ldots,0),(0,l/a_1,\ldots,0),\ldots,(0,\ldots,0,l/a_n)$ in the hyperplane $\Lambda = \{(x_0,\ldots,x_{n}) \in \mathbb{R}^{n+1} ~|~a_0x_{0}+\cdots+a_{n}x_{n}=l\}$, with lattice $\Lambda \cap \mathbb{Z}^{n+1}$. 
It is a well-known example in toric geometry that the toric variety associated to $Q$ is $\mathbb{P}$ and that the Cartier $T$-divisor associated  to $Q$ generates $\operatorname{Pic}\mathbb{P}$  
(see Corollary 1.25 in the extended version of \cite{RT12}). 
%
%
Now, for each $0 \leq i < j \leq n$, it is immediate to see that the lattice length of the segment between the $i$-th and $j$-th vertices is precisely $l/\lcm(a_i,a_j)$, and the conclusion follows.
\end{proof}

\begin{example}
Let $\mathbb{P}=\mathbb{P}(a,b,c)$ be a weighted projective plane with weights reduced such that $a,b,c$ are pairwise coprime positive integers.
Let $D$ be an ample Cartier divisor on $\mathbb{P}$.
Then, $D$ is $k$-jet ample for any $0 \leq k\leq \min\{a,b,c\}$ by Theorem~\ref{theorem.example.weighted.projective.spaces}. 
\end{example}



\section{Fujita's jet ampleness conjecture}     \label{section.application}

In this section, we prove that a generalized Fujita's conjecture for k-jet ampleness holds for toric varieties. We first deal with the case that $X$ is the projective space $\mathbb{P}^n$. Let  $H$ be a hyperplane divisor, and $D$ an ample Cartier divisor on $X$. Here $K_X$ is linearly equivalent to $-(n+1)H$. Let $\ell$ be the smallest intersection number of $D$ with curves in $X$. Hence $\ell$ is the degree of $D$. If $\ell\ge n+k+1$, then $\deg(K_X+D)=\ell-n-1\ge k$. Hence $K_X+D$ is $k$-jet ample.  

Now we show that if $X$ is not the projective space, a generalization of Fujita's conjecture for $k$-jet ampleness holds true for smaller intersection numbers, as stated in Theorem~\ref{thm.Fujita}. 
This generalization of Fujita's freeness conjecture was proved by Fujino, see \cite{Fuj03}. The Fujita's very ampleness conjecture was proved by Payne in \cite{Pay06}. In order to prove the higher jets cases for $k\ge 2$, we will need the following strengthening of a nefness result due to Fujino, which can be deduced from  \cite[Theorem 0.1]{Fuj03}. This version was proved by Payne in \cite[Section~4]{Pay06}. Note that $D+D'$ is not assumed to be Cartier here. 

\begin{theorem}[{\cite[Theorem 0.1]{Fuj03},\cite[Section 4]{Pay06}}]\label{Thm.Fujinoplus}
Let $X$ be a projective $n$-dimensional toric variety not isomorphic to $\mathbb{P}^{n}$. Let $D$ and $D^{\prime}$ be $\mathbb{Q}$-Cartier divisors such that $0 \geq D^{\prime} \geq K_{X}$, and $D \cdot C \geq n$ for all $T$-invariant curves $C$. Then $D+D^{\prime}$ is nef.
\end{theorem}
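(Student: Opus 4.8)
The plan is to verify nefness of the $\mathbb{Q}$-Cartier divisor $D+D'$ directly through the toric numerical criterion: on a complete toric variety a $\mathbb{Q}$-Cartier divisor is nef if and only if it meets every $T$-invariant curve nonnegatively. Each such curve is the curve $C_\tau$ associated to a wall $\tau$ (a codimension-one cone of $\Delta$), which sits between two maximal cones $\sigma,\sigma'$. Since intersection with $C_\tau$ is additive in the divisor, I would split $(D+D')\cdot C_\tau = D\cdot C_\tau + D'\cdot C_\tau$ and feed in the hypothesis $D\cdot C_\tau\ge n$. It therefore suffices to prove, for every wall $\tau$, the lower bound $D'\cdot C_\tau\ge -n$.

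To estimate $D'\cdot C_\tau$ I would use the standard combinatorial picture of intersection numbers. Writing $D'=\sum_\rho b_\rho D_\rho$ over the $T$-invariant prime divisors, the hypothesis $0\ge D'\ge K_X=-\sum_\rho D_\rho$ gives $-1\le b_\rho\le 0$ for every ray $\rho$. Set $\gamma_\rho:=D_\rho\cdot C_\tau$. Only the rays of $\sigma\cup\sigma'$ contribute; the two primitive generators $u,u'$ of the rays not lying on $\tau$ satisfy $\gamma_u,\gamma_{u'}>0$; and, pairing $0=\operatorname{div}(\chi^m)\cdot C_\tau=\langle m,\sum_\rho\gamma_\rho u_\rho\rangle$ against all $m\in M$, the numerical wall relation $\sum_\rho \gamma_\rho\,u_\rho=0$ in $N_{\mathbb{R}}$ holds, where $u_\rho$ is the primitive generator of $\rho$. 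Because $b_\rho\in[-1,0]$, the worst case is $b_\rho=-1$ exactly on the rays with $\gamma_\rho>0$, giving $D'\cdot C_\tau=\sum_\rho b_\rho\gamma_\rho\ge -\sum_{\rho:\,\gamma_\rho>0}\gamma_\rho$. Combined with $D\cdot C_\tau\ge n$, the nefness inequality across $\tau$ reduces to the purely combinatorial claim $\sum_{\rho:\,\gamma_\rho>0}\gamma_\rho\le n$.

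The crux — and the only place where $X\not\cong\mathbb{P}^n$ enters — is thus the combinatorial lemma: for every wall $\tau$ of a complete $n$-dimensional $X\not\cong\mathbb{P}^n$ one has $\sum_{\rho:\,\gamma_\rho>0}\gamma_\rho\le n$. The bound is governed by the sharper global fact that this quantity is at most $n+1$, with the value $n+1$ attained exactly when $X\cong\mathbb{P}^n$ and $C_\tau$ is a line. I would prove it by analyzing the wall relation $\sum_\rho\gamma_\rho u_\rho=0$: the positively weighted rays balance the negatively weighted ones, and one shows that saturating the bound forces there to be no negatively weighted ray together with $n+1$ unit-weight primitive rays satisfying a primitive relation $\sum_{i=0}^n u_i=0$ that spans $N$; completeness of $\Delta$ then forces the fan to be the face fan of the simplex on $u_0,\dots,u_n$, i.e. $X\cong\mathbb{P}^n$. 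This is exactly the combinatorial content isolated by Fujino and sharpened by Payne.

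I expect this lemma to be the main obstacle, for two reasons. First, characterizing $\mathbb{P}^n$ by an extremal wall relation is delicate: on singular $X$ the $\gamma_\rho$ are merely rational and individual weights can be large, so one must genuinely control both the large positive weights and the negatively weighted rays rather than simply count rays. Second, the scheme must run without a simplicial hypothesis; for a non-simplicial wall one either computes the $\gamma_\rho$ directly on $X$ and uses the wall relation as above, or passes to a projective simplicial refinement $\widetilde{\Delta}$ and checks that the inequality descends — which requires tracking the behavior of $K_X$, of the coefficient bounds on $D'$, and of the curve classes under the refinement. A second route, closer to Fujino's original argument, is an induction on $\dim X$ via toric adjunction: $C_\tau$ lies in some invariant divisor $D_\rho$, itself a complete toric variety of dimension $n-1$ with $(K_X+D_\rho)|_{D_\rho}=K_{D_\rho}$, and one restricts $D$ and $D'$ to $D_\rho$; there the obstacle shifts to handling the $\mathbb{P}^{n-1}$ cases that arise in the induction and to preserving the inequalities $0\ge D'\ge K_X$ under restriction.
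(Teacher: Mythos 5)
The paper does not actually prove this statement; it is imported verbatim from Fujino \cite{Fuj03} and Payne \cite[Section 4]{Pay06}, so your attempt must be measured against that argument. Your reduction via the toric Kleiman criterion, the wall relation $\sum_\rho \gamma_\rho u_\rho = 0$, and the worst-case estimate $D'\cdot C_\tau \ge -\sum_{\gamma_\rho>0}\gamma_\rho$ are all correct, but the combinatorial lemma the whole scheme rests on --- that $\sum_{\gamma_\rho>0}\gamma_\rho \le n$ for \emph{every} wall when $X\not\cong\mathbb{P}^n$, with the value $n+1$ attained only by lines in $\mathbb{P}^n$ --- is false. Take $X$ the blow-up of $\mathbb{P}^2$ at a torus-fixed point, with rays $u_1=(1,0)$, $u_2=(0,1)$, $u_3=(1,1)$, $u_4=(-1,-1)$ and maximal cones $\operatorname{Cone}(u_1,u_3)$, $\operatorname{Cone}(u_3,u_2)$, $\operatorname{Cone}(u_2,u_4)$, $\operatorname{Cone}(u_4,u_1)$. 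For the wall $\tau=\operatorname{Cone}(u_4)$, the curve $C=V(\tau)$ has class $H$ (a line missing the blown-up point), and the wall relation is $1\cdot u_1+1\cdot u_2+0\cdot u_3+1\cdot u_4=0$, so the positive intersection numbers are $1,1,1$ and $\sum_{\gamma_\rho>0}\gamma_\rho = 3 = n+1$ even though $X\not\cong\mathbb{P}^2$. Taking $D'=K_X$ gives $D'\cdot C=K_X\cdot H=-3$ exactly, so your per-wall chain yields only $(D+D')\cdot C \ge n-(n+1) = -1$, which does not close the argument; saturation of the bound does \emph{not} force a primitive relation $\sum_{i=0}^{n}u_i=0$ spanning $N$ with no negatively weighted rays.

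The theorem nevertheless holds in this example because $[C]=[H]=[H-E]+[E]$ decomposes in the Mori cone, and applying the hypothesis to the two extremal invariant curves gives $D\cdot C\ge 2n=4$, absorbing the loss: the per-curve estimate is too lossy precisely on non-extremal walls. This is the idea your proposal is missing and the one Fujino's proof (as adapted by Payne) is built on: since $\overline{NE}(X)$ is a rational polyhedral cone spanned by classes of $T$-invariant curves, it suffices to test $(D+D')\cdot C\ge 0$ on invariant curves spanning \emph{extremal rays}, and the dichotomy you want --- the relevant positivity bounded by $n$ unless $X\cong\mathbb{P}^n$ --- is Fujino's bound on lengths of extremal rays, a statement about extremal wall relations only, not arbitrary ones. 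In the example above, $H$ is not extremal ($H-E$ and $E$ are, each of $-K_X$-degree $2=n$), which is consistent with that repaired statement. Your alternative sketch via adjunction and induction on dimension is in fact closer in spirit to the actual argument, but as submitted the main route fails at the stated lemma.
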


Our proof follows the same idea of the proof of Fujita's very ampleness for singular toric varieties. We briefly describe the idea here. For any maximal cone $\sigma$, let $u_\sigma$ and $u_{\sigma}'$ be the corresponding points of the divisors $D$ and $D'$ in $M_{\mathbb{Q}}$. Since $-D'\le -K_X$ is a small divisor, the polytope $P_{D+D'}$ is obtained by moving the faces of $P_D$ by a small distance. The new vertices of $P_{D+D'}$ are interior points of $P_D$. 

\begin{definition} \label{definition.minimum.weight.function}
Let $Q$ be a cone in a lattice $M$ and let $w_1,\ldots,w_{m}$ be the primitive lattice generators of its rays. 
We define the minimum weight function $W_{\min}^Q: Q \rightarrow \mathbb{R}$ by 
\[W_{\min}^Q(u) := \min\left\{ \left. \sum_{i=1}^{m}a_{i} \,  \right| \, u= \sum_{i=1}^{m}a_{i}w_i \textnormal{ and } a_i \geq 0 \textnormal{ for all $i$} \right\}.
\]
for any $u \in Q$. When $Q$ is simplicial we have ${\displaystyle W_{\min}^Q=W_{\max}^Q}$.  
\end{definition}

In \cite{Pay06}, Payne proved that for each maximal cone $\sigma$, there is a lower bound for the lattice length of edges of $P_{D+D'}$ starting from the vertex $u_{\sigma}+u_{\sigma}'$ in terms of the minimal weight of $u_{\sigma}'$. 

\begin{proposition}[{\cite[Proposition]{Pay06}}]\label{prop.ld.m}
Let $X$ be a complete toric variety and $\sigma$ a maximal cone in the fan defining $X$ and $Q$ the dual cone $\sigma^\vee$. Let $D$ and $D'$ be $T$-invariant $\mathbb{Q}$-Cartier divisors such that $D$ is nef and $0 \geq D^{\prime} \geq K_{X}$. Let $t_{\sigma}=\min \{D \cdot V(\sigma \cap \tau)\}$ and $m_{\sigma}=\min \left\{\left(D+D^{\prime}\right) \cdot V(\sigma \cap \tau)\right\}$, where $\tau$ varies over all maximal
cones adjacent to $\sigma$. Suppose $t_\sigma \geq W^{Q}_{\min}\left(u_{\sigma}^{\prime}\right)$. Then
$$
m_{\sigma} \geq t_{\sigma}-W^Q_{\min}\left(u_{\sigma}^{\prime}\right)-1\ge t_{\sigma}-W^Q_{\max}\left(u_{\sigma}^{\prime}\right)-1.
$$
\end{proposition}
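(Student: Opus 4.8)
The plan is to recast both $t_\sigma$ and $m_\sigma$ as minima of (signed) lattice lengths of edges of the associated polytopes and then to prove the inequality one edge at a time. Let $w_1,\dots,w_m$ be the primitive ray generators of $Q=\sigma^\vee$; these correspond bijectively to the facets $\omega_j=\sigma\cap\tau_j$ of $\sigma$, hence to the maximal cones $\tau_j$ adjacent to $\sigma$. Writing $D=\sum_\rho a_\rho D_\rho$ and $D'=\sum_\rho a'_\rho D_\rho$, the hypothesis $0\ge D'\ge K_X$ says exactly $a'_\rho\in[-1,0]$, so $\langle u'_\sigma,v_\rho\rangle=-a'_\rho\in[0,1]$ for each ray $v_\rho$ of $\sigma$ and therefore $u'_\sigma\in Q$. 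For the nef divisor $D$ one has $u_{\tau_j}-u_\sigma=\ell_j w_j$ with $\ell_j=D\cdot V(\omega_j)\ge 0$, and by linearity of the intersection pairing $u'_{\tau_j}-u'_\sigma=c_j w_j$ with $c_j=D'\cdot V(\omega_j)$, whence $(D+D')\cdot V(\omega_j)=\ell_j+c_j$. Thus $t_\sigma=\min_j \ell_j$ and $m_\sigma=\min_j(\ell_j+c_j)$, and it suffices to prove, for each fixed $j$, that $\ell_j+c_j\ge t_\sigma-W-1$, where $W:=W^Q_{\min}(u'_\sigma)$. I would fix a minimal representation $u'_\sigma=\sum_k \beta_k w_k$ with $\beta_k\ge 0$ and $\sum_k\beta_k=W$.

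The key step is a slack computation at a well-chosen ray. Fix a ray $v'_j$ of $\tau_j$ not lying in $\omega_j$ and set $e_j=-\langle w_j,v'_j\rangle$, an integer $\ge 1$ since $w_j$ vanishes on $\omega_j$ and is strictly negative on $\tau_j\smallsetminus\omega_j$. Pairing the edge relation $\ell'_j w_j=(u_{\tau_j}+u'_{\tau_j})-(u_\sigma+u'_\sigma)$ with $v'_j$, and using that $u_{\tau_j}$ and $u'_{\tau_j}$ lie on the facets dual to $v'_j$ (so $\langle u_{\tau_j},v'_j\rangle=-a_{v'_j}$, $\langle u'_{\tau_j},v'_j\rangle=-a'_{v'_j}$), gives the clean identity
\[
\ell_j+c_j \ =\ \ell_j+\frac{a'_{v'_j}+\langle u'_\sigma,v'_j\rangle}{e_j},\qquad a'_{v'_j}\in[-1,0].
\]
Everything now reduces to bounding $\langle u'_\sigma,v'_j\rangle$ from below. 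The crucial input is the nefness of $D$: since $t_\sigma\le \ell_k$, the point $u_\sigma+t_\sigma w_k$ lies in $P_D$ for every $k$, and evaluating the defining inequality of $P_D$ at $v'_j$ (using $\langle u_\sigma,v'_j\rangle+a_{v'_j}=e_j\ell_j$) yields $t_\sigma\langle w_k,v'_j\rangle\ge -e_j\ell_j$, i.e. $\langle w_k,v'_j\rangle\ge -e_j\ell_j/t_\sigma$. Summing against the weights $\beta_k$ gives $\langle u'_\sigma,v'_j\rangle\ge -W e_j\ell_j/t_\sigma$.

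Combining the last two displays with $a'_{v'_j}\ge -1$, I obtain
\[
\ell_j+c_j \ \ge\ \ell_j-1-\frac{W\ell_j}{t_\sigma} \ =\ \ell_j\Bigl(1-\tfrac{W}{t_\sigma}\Bigr)-1 .
\]
The hypothesis $t_\sigma\ge W$ makes the factor $1-W/t_\sigma$ nonnegative, so I may replace $\ell_j$ by the smaller value $t_\sigma$ and conclude $\ell_j+c_j\ge t_\sigma\bigl(1-W/t_\sigma\bigr)-1=t_\sigma-W-1$; taking the minimum over $j$ gives $m_\sigma\ge t_\sigma-W-1$. The degenerate case $t_\sigma=0$ is handled separately: then $t_\sigma\ge W\ge 0$ forces $W=0$, hence $u'_\sigma=0$, and directly $c_j\in[-1,0]$, so $\ell_j+c_j\ge \ell_j-1\ge t_\sigma-1$. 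The second inequality in the statement is immediate from $W^Q_{\min}(u'_\sigma)\le W^Q_{\max}(u'_\sigma)$.

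I expect the main obstacle to be precisely the coupling exposed above: one cannot bound $c_j=D'\cdot V(\omega_j)$ in isolation, as it can be very negative whenever a face of $P_D$ meets $\omega_j$ at a sharp angle, forcing $|\langle w_k,v'_j\rangle|$ to be large. What rescues the estimate is that the same geometry forces $\ell_j=D\cdot V(\omega_j)$ to grow in tandem, and the nefness of $D$ — encoded in the membership $u_\sigma+t_\sigma w_k\in P_D$ — is exactly what turns this qualitative trade-off into the quantitative inequality $\langle w_k,v'_j\rangle\ge -e_j\ell_j/t_\sigma$. The delicate point is arranging the minimal-weight representation of $u'_\sigma$ so that it interacts correctly with the hypothesis $t_\sigma\ge W$; the non-simplicial case then costs nothing beyond choosing a single ray $v'_j$ of $\tau_j$ outside $\omega_j$ rather than assuming it is unique.
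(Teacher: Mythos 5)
Your proof is correct and follows essentially the same argument as the source the paper cites for this statement (the paper gives no proof of its own, quoting \cite{Pay06}): the per-edge reduction $u_{\tau_j}-u_\sigma=\ell_j w_j$ with $\ell_j = D\cdot V(\omega_j)$, the nefness input $u_\sigma + t_\sigma w_k \in P_D$ yielding $\langle w_k, v'_j\rangle \geq -e_j\ell_j/t_\sigma$, the minimal-weight representation of $u'_\sigma$, and the bound $\psi_{D'}(v'_j)\leq 1$ coming from $D'\geq K_X$ are exactly the ingredients of Payne's proof. Your explicit identity $c_j=(a'_{v'_j}+\langle u'_\sigma,v'_j\rangle)/e_j$ and the separate treatment of the degenerate case $t_\sigma=0$ are minor streamlinings of the same computation.
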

Later we will use the following upper bound of the maximal weight of $u'_{\sigma}$.  
\begin{lemma}[{\cite[Lemma 3]{Pay06}}] \label{lemma.weightof uprime}
Let $D^{\prime}$ be a $\mathbb{Q}$-Cartier $T$-divisor, with $0\geq D' \geq K_{X}$. Then $W^Q_{\max}\left(u_{\sigma}^{\prime}\right) \leq W^Q_{\max}(u)$ for any
lattice point $u$ in the interior of $\sigma^{\vee}.$
\end{lemma}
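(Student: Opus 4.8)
The plan is to recast the maximum weight function through linear programming duality, and then to compare the two relevant linear functionals on a single common feasible region. Throughout, let $w_1,\ldots,w_m$ denote the primitive ray generators of $Q=\sigma^\vee$, and for each ray $\rho$ of $\sigma$ let $v_\rho$ denote its primitive generator.

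The first step is to translate the hypothesis $0 \ge D' \ge K_X$ into a bound on the support datum $u'_\sigma$. Writing $D' = \sum_\rho a'_\rho D_\rho$ and recalling $K_X = -\sum_\rho D_\rho$, the inequalities $0 \ge D' \ge K_X$ amount to $-1 \le a'_\rho \le 0$ for each ray $\rho$ of the fan. Since $\langle u'_\sigma, v_\rho \rangle = -a'_\rho$, this gives $0 \le \langle u'_\sigma, v_\rho \rangle \le 1$ for every ray $\rho$ of $\sigma$. In particular $u'_\sigma \in \sigma^\vee = Q$, so that $W^Q_{\max}(u'_\sigma)$ is defined, and the upper bound $\langle u'_\sigma, v_\rho \rangle \le 1$ is the only feature of $u'_\sigma$ I will use below.

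The key tool is the dual description of the weight function: for any $u \in Q$, linear programming duality yields
\[
W^Q_{\max}(u) = \min\left\{ \langle u, y \rangle \ \middle| \ y \in N_{\mathbb{R}}, \ \langle w_i, y \rangle \ge 1 \text{ for all } i \right\}.
\]
I would record this as a short standalone observation applicable to both $u$ and $u'_\sigma$: the primal program is feasible because the argument lies in $Q$, and it is bounded because $Q$ is strictly convex (there is no nontrivial nonnegative relation among the $w_i$), so strong duality applies and the optimal values agree. I expect the only delicate point of the whole argument to be the clean justification of this formula, namely the boundedness of the primal and the fact, used next, that every dual-feasible $y$ automatically lies in $\sigma$; once these are in place the rest is immediate. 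Write $P^\ast = \{ y \in N_{\mathbb{R}} \mid \langle w_i, y \rangle \ge 1 \text{ for all } i \}$ for the common feasible region.

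It then remains to compare the functionals $\langle u, \,\cdot\, \rangle$ and $\langle u'_\sigma, \,\cdot\, \rangle$ on $P^\ast$. Any $y \in P^\ast$ satisfies $\langle w_i, y \rangle \ge 0$ for all $i$, hence $y \in (\sigma^\vee)^\vee = \sigma$, so it can be written as $y = \sum_\rho c_\rho v_\rho$ with $c_\rho \ge 0$. Because $u$ is a lattice point in the interior of $\sigma^\vee$, each $\langle u, v_\rho \rangle$ is a positive integer and hence at least $1$, giving $\langle u, y \rangle \ge \sum_\rho c_\rho$; dually, $\langle u'_\sigma, v_\rho \rangle \le 1$ gives $\langle u'_\sigma, y \rangle \le \sum_\rho c_\rho$. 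Thus $\langle u, y \rangle \ge \langle u'_\sigma, y \rangle$ for every $y \in P^\ast$, and minimizing over $P^\ast$ while applying the duality formula to both arguments yields $W^Q_{\max}(u) \ge W^Q_{\max}(u'_\sigma)$, as desired.
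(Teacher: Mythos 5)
Your proof is correct, but note that the paper itself contains no proof of this lemma: it is imported verbatim from \cite[Lemma 3]{Pay06}, so the relevant comparison is with Payne's argument. Both proofs begin with the same translation of the hypotheses, which you carry out correctly under the paper's conventions ($K_X=-\sum_\rho D_\rho$ and $\operatorname{div}(\chi^{-u'_\sigma})|_{U_\sigma}=D'|_{U_\sigma}$): namely $0\le \langle u'_\sigma, v_\rho\rangle \le 1$ for each primitive ray generator $v_\rho$ of $\sigma$, while a lattice point $u$ interior to $\sigma^\vee$ has $\langle u, v_\rho\rangle \ge 1$. From there the routes diverge. Payne's proof is a superadditivity one-liner: the two displayed bounds give $\langle u-u'_\sigma, v_\rho\rangle\ge 0$ for every ray of $\sigma$, hence $u-u'_\sigma\in \sigma^\vee=Q$, and then $W^Q_{\max}(u)\ge W^Q_{\max}(u'_\sigma)+W^Q_{\max}(u-u'_\sigma)\ge W^Q_{\max}(u'_\sigma)$, using only that $W^Q_{\max}$ is superadditive (concatenate optimal expressions) and nonnegative on $Q$. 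You instead pass through the linear-programming dual description $W^Q_{\max}(u)=\min\{\langle u,y\rangle \mid \langle w_i,y\rangle\ge 1 \textnormal{ for all } i\}$ and compare the functionals $\langle u,\cdot\rangle$ and $\langle u'_\sigma,\cdot\rangle$ pointwise on the common feasible region; your justifications are all sound --- the primal is feasible since the argument lies in $Q$ and bounded since strict convexity of $Q$ (valid here, as $\sigma$ is a full-dimensional maximal cone) rules out nontrivial nonnegative relations among the $w_i$, every dual-feasible $y$ lies in $(\sigma^\vee)^\vee=\sigma$ and so decomposes as $\sum_\rho c_\rho v_\rho$ with $c_\rho\ge 0$, and minimizing the pointwise inequality over the feasible region is legitimate. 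What your detour buys is a clean, reusable characterization of $W^Q_{\max}$ as a minimum of linear functionals (which also makes its concavity and the monotonicity in $u'_\sigma$ transparent); what it costs is the duality apparatus, where Payne's two lines reach the identical conclusion in the same generality (any $u'\in Q$ pairing $\le 1$ with every $v_\rho$ works in both arguments). There is no gap: your proof is essentially the dual formulation of the standard one.
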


Now we prove Theorem~\ref{thm.Fujita} on Fujita's jet ampleness. 
\begin{proof}[Proof of Theorem~\ref{thm.Fujita}]
The case $k=0$ is proved by Fujino, see \cite[Corollary 0.2]{Fuj03}. We will assume that $k\ge 1$. 
Notice that ${\frac{n}{n+k}}D \cdot C \geq n$, for all $T$-invariant curves $C$. 
By Theorem~\ref{Thm.Fujinoplus}, we have $\frac{n}{n+k}D+D'$ is nef. For any curve $C$ in $X$, we have 
\begin{equation}\label{ineq-k}
\left(D+D^{\prime}\right) \cdot C=\frac{k}{n+k}(D \cdot C)+\left(\frac{n}{n+k} D+D^{\prime}\right) \cdot C \geq k.    
\end{equation}

Hence $D+D'$ is ample. In particular, $P_{D+D'}$ is a lattice polytope with vertices $u_{\sigma}+u'_\sigma$ for all maximal cones $\sigma$ in the fan of $X$. Let $L_\sigma$ be the minimum of the lattice lengths of the edges of $P_{D+D'}$ having $u_{\sigma}+u'_{\sigma}$ as a vertex. 
For any maximal cone $\sigma'$ sharing an $(n-1)$-dimensional cone $\tau$ with $\sigma$, we have that the lattice length of the edge connecting $u_{\sigma}+u'_{\sigma}$ and $u_{\sigma'}+u'_{\sigma'}$ is the intersection number $(D+D')\cdot V(\tau)$ (see for example the proof of Proposition~\ref{proposition.triple.equivalence}). 
Hence $L_\sigma=m_\sigma$. 

We will show that $m_\sigma\ge k+\Gamma_{\sigma^\vee}$ for every maximal cone $\sigma$. Then the $k$-jet ampleness of $D+D'$ follows from the main theorem. Let $\mu$ be the minimal intersection number of $D+D'$ with $T$-invariant curves in $X$. Since $\mu=\min\{m_\sigma\}$, where $\sigma$ runs over maximal cones of $X$, the inequality \eqref{ineq-k} implies that  $\mu\ge k$, which implies that $m_\sigma\ge k$. 

For simplicity, we will denote $\sigma^\vee$ by $Q$. Let $w_1, w_2, \ldots, w_{m}$ be the primitive lattice generators of the rays of $Q$ and $
S=\{\sum_{i=1}^{m} a_{i} w_{i} \,|\, 0 \leq a_{i}<1 \text { for all } i\}$. Let $p$ be a lattice point in $S$ which computes $\Gamma_Q$. Hence $\Gamma_{\sigma^\vee}=\Gamma_{Q}=W^{Q}_{\max}(p)-k_p$. Now we show that $m_\sigma\ge k+\Gamma_{Q}$. 

\begin{itemize}[leftmargin=*]
    \item[-] If $\sigma$ is smooth, then $\Gamma_{Q}=0$. Hence $m_\sigma\ge \mu\ge k=k+\Gamma_{Q}$.

\item[-] If $\sigma$ is not smooth, i.e., $S\neq \{0\}$.  

\noindent If $p=0$, then $\Gamma_Q=0$. Hence $m_\sigma\ge \mu\ge k=k+\Gamma_Q$.

\noindent If $p$ is not equal to $0$, then $k_p\ge 1$. 
Furthermore, we can choose a top-dimensional simplicial subcone $Q'$ of $Q$ containing $p$ such that $W_{\max}^Q$ is linear on $Q'$. Let $\{w_{i_1}, \ldots, w_{i_n}\}$ be the subset of $\{w_1, w_2, \ldots, w_{m}\}$  containing the primitive lattice generators of the rays of $Q'$. Then $p=a_{1} w_{i_{1}}+\cdots+a_{n} w_{i_n}$ for some $0\le a_i<1$. One can check that $p'=(1-a_{1}) w_{i_{1}}+\cdots+(1-a_{n}) w_{i_n}$ is a lattice point in the interior of $Q'$. Hence it is in the interior of $Q$ and $W^Q_{\max}(p')=W_{\max}^{Q'}(p')=\sum_{i=1}^n (1-a_i)=n-W^Q_{\max}(p)$. Lemma~\ref{lemma.weightof uprime} implies that 
$W^Q_{\max}\left(u_{\sigma}^{\prime}\right) \leq W^Q_{\max}(p')=n-W^Q_{\max}(p)$.
This implies that $W^Q_{\max}(u_\sigma')\le n<n+k\le t_\sigma$.
Proposition~\ref{prop.ld.m} and Lemma~\ref{lemma.weightof uprime} imply that 
\[
m_\sigma \geq n+k-W^Q_{\max}\left(u_{\sigma}^{\prime}\right)-1 
\geq k-1+W^Q_{\max}(p)
= k-1+\Gamma_Q+k_p\ge k+\Gamma_Q.
\qedhere
\] 
\end{itemize}
\end{proof}

\begin{corollary}
Let $D$ be an ample Cartier divisor on a projective $n$-dimensional Gorenstein toric variety $X$ not isomorphic to $\mathbb{P}^n$. Then $K_X+(n+k)D$ is $k$-jet ample for all $k\in \mathbb{Z}_{\ge 0}$.   
\end{corollary}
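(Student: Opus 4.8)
The plan is to obtain this statement as an immediate consequence of Theorem~\ref{thm.Fujita}. The key observation is that since $X$ is Gorenstein, the canonical divisor $K_X$ is Cartier, which is precisely what allows us to take the theorem's divisor $D'$ equal to $K_X$. First I would replace $D$ by a linearly equivalent $T$-invariant ample Cartier divisor; this is harmless, since $k$-jet ampleness depends only on the linear equivalence class of $K_X+(n+k)D$, and $K_X$ is already $T$-invariant by the conventions of Section~\ref{preliminaries.notation}.

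Next I would verify the hypotheses of Theorem~\ref{thm.Fujita} with the theorem's first divisor taken to be $(n+k)D$ and the theorem's divisor $D'$ taken to be $K_X$. The condition $0 \ge K_X \ge K_X$ holds because the right-hand inequality is an equality, while $-K_X$ is the sum of the $T$-invariant prime divisors and is therefore effective, so that $K_X \le 0$. The sum $(n+k)D + K_X$ is Cartier since both summands are Cartier, which is again where the Gorenstein hypothesis enters. Finally, for every complete $T$-invariant curve $C$, the ampleness of the Cartier divisor $D$ forces $D \cdot C$ to be a positive integer, hence $D \cdot C \ge 1$, and therefore $(n+k)D \cdot C = (n+k)(D \cdot C) \ge n+k$.

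With all hypotheses verified, Theorem~\ref{thm.Fujita} yields that $(n+k)D + K_X = K_X + (n+k)D$ is $k$-jet ample, as desired; the case $k=0$ recovers Fujino's freeness result and is consistent with the $k=0$ instance of the theorem. I expect no genuine obstacle here beyond bookkeeping: the only points requiring care are the elementary facts that $K_X \le 0$ and that an ample Cartier divisor meets every invariant curve in a positive integer, so that the intersection bound $(n+k)D \cdot C \ge n+k$ needed to invoke the theorem is met with room to spare.
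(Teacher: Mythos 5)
Your proposal is correct and matches the paper's intended derivation: the corollary is stated there without proof precisely because it follows from Theorem~\ref{thm.Fujita} by taking the theorem's ample divisor to be $(n+k)D$ (after replacing $D$ by a linearly equivalent $T$-invariant divisor) and $D' = K_X$, with the Gorenstein hypothesis guaranteeing $K_X$ is Cartier. Your verification of the hypotheses, including $K_X \le 0$ and $D \cdot C \ge 1$ for ample Cartier $D$, is exactly the bookkeeping the paper leaves to the reader.
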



\section{Higher concavity of piecewise linear functions}   \label{section.concavity}

In this section, we introduce a notion of higher concavity for piecewise linear functions on fans in Definition~\ref{definition.concavity} and study its basic properties in Section~\ref{preliminaries.concavity}. In particular, we relate this higher concavity to properties of divisors on toric varieties in Proposition~\ref{proposition.triple.equivalence}.

\subsection{Review: Multiplicities of cones}   \label{preliminaries.multiplicities}

Recall that if $\sigma$ is a cone and $v_1,\ldots,v_k$ are the first lattice points along its rays,
the multiplicity of $\sigma$, denoted by $\operatorname{mult}(\sigma)$ or $\operatorname{mult}(v_1,\ldots,v_k)$, is defined to be the index of  the lattice generated by the $v_i$ in the lattice associated to $\sigma$:
\[
\operatorname{mult}(\sigma)=\operatorname{mult}(v_1,\ldots,v_k):=[ N_{\sigma}:\mathbb{Z}v_1+\cdots+\mathbb{Z}v_k].
\]

{ The primitive generators of the rays of a smooth cone are part of a basis of the lattice, hence the mul\-ti\-pli\-ci\-ty of any smooth cone is one.} Using multiplicities we can write the following relation between the primitive generators of the rays of a top dimensional simplicial cone and the primitive generators of the rays of its dual.

\begin{lemma}   \label{lemma.multiplicities}
Let $v_1,  \allowbreak \ldots, \allowbreak v_n \allowbreak \in \allowbreak \mathbb{Z}^n$ be primitive vectors that are linearly independent and 
let $w_1, \allowbreak \ldots, \allowbreak w_n \allowbreak \in \allowbreak \mathbb{Z}^n$ be the primitive generators of the rays of  $\operatorname{Cone}(v_1,\ldots,v_n)^{\vee}$, labeled such that $\langle w_i,v_j \rangle=0$ for $i \neq j$. Then, for each $1\leq i \leq n$,
\[
\langle v_i, w_i\rangle=\frac{\operatorname{mult}(v_1,\ldots,v_n)}{\operatorname{mult}(v_1,\ldots,\widehat{v_i},\ldots,v_n)}.
\]
\end{lemma}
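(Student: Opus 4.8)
The plan is to compute both sides explicitly using the defining duality relation and the standard formula expressing multiplicity as the absolute value of a determinant. Since $v_1,\ldots,v_n$ are linearly independent, the cone $\operatorname{Cone}(v_1,\ldots,v_n)$ is top-dimensional and simplicial, so its dual is also simplicial with exactly $n$ rays, and the labeling $\langle w_i, v_j\rangle = 0$ for $i\neq j$ is well-defined because each facet of the primal cone is spanned by $n-1$ of the $v_j$ and determines the corresponding ray of the dual. The key algebraic fact I would use is that $\langle w_i, v_i\rangle > 0$ for each $i$ (since $w_i$ is nonzero on the dual cone and $v_i$ lies in the cone but not on the facet defining $w_i$), so all pairings in question are positive and we may drop absolute values once signs are pinned down.

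First I would fix notation by letting $V$ be the $n\times n$ integer matrix with columns $v_1,\ldots,v_n$ and $W$ the matrix with columns $w_1,\ldots,w_n$. The orthogonality relations $\langle w_i, v_j\rangle = 0$ for $i\neq j$ say precisely that $W^{\mathsf{T}} V$ is a diagonal matrix, with $i$-th diagonal entry $\langle w_i, v_i\rangle$. Taking determinants gives
\[
\det(W^{\mathsf{T}}V) = \prod_{i=1}^{n} \langle w_i, v_i\rangle.
\]
Next I would bring in multiplicities: since $v_1,\ldots,v_n$ are linearly independent primitive vectors spanning $N_\sigma = \mathbb{Z}^n$, one has $\operatorname{mult}(v_1,\ldots,v_n) = |\det V|$, and likewise $\operatorname{mult}(w_1,\ldots,w_n) = |\det W|$ for the dual cone. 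The subfacet multiplicities $\operatorname{mult}(v_1,\ldots,\widehat{v_i},\ldots,v_n)$ are computed in the rank $n-1$ sublattice spanned by the corresponding $v_j$'s.

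The crux of the argument, and the step I expect to be the main obstacle, is relating the facet multiplicity $\operatorname{mult}(v_1,\ldots,\widehat{v_i},\ldots,v_n)$ to the entries of these determinants in a way that isolates a single pairing $\langle v_i, w_i\rangle$ rather than the product. The clean route is to expand $\det V$ along cofactors: because $w_i$ is (up to a positive rational scalar) the primitive generator in the direction of the vector orthogonal to $v_1,\ldots,\widehat{v_i},\ldots,v_n$, its coordinates are, up to normalization, the signed maximal minors of the submatrix of $V$ obtained by deleting the $i$-th column, and the primitivity of $w_i$ forces the normalizing factor to be exactly $\gcd$ of those minors, which in turn equals $\operatorname{mult}(v_1,\ldots,\widehat{v_i},\ldots,v_n)$ by the interpretation of that gcd as the index of the lattice the $v_j$ span inside its saturation. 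Pairing this expression for $w_i$ with $v_i$ recovers $\det V$ in the numerator and the facet multiplicity in the denominator, which is exactly the claimed identity
\[
\langle v_i, w_i\rangle = \frac{\operatorname{mult}(v_1,\ldots,v_n)}{\operatorname{mult}(v_1,\ldots,\widehat{v_i},\ldots,v_n)}.
\]
I would handle the sign bookkeeping by noting all quantities involved are positive, so it suffices to match absolute values and then observe $\langle v_i, w_i\rangle > 0$. An alternative that avoids explicit cofactor expansion is a change-of-lattice argument: pass to coordinates in which $v_1,\ldots,v_n$ are a basis to reduce $\operatorname{mult}(v_1,\ldots,v_n)$ to $1$, track how the facet multiplicity and the pairing transform under this unimodular-up-to-index change, and reduce to the smooth case where the identity reads $\langle v_i, w_i\rangle = 1$; but I expect the cofactor computation to be the most transparent, with the gcd-equals-multiplicity interpretation being the delicate point to justify carefully.
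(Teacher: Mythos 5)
Your proof is correct, but it takes a genuinely different route from the paper's. You compute $w_i$ explicitly: up to sign it is the vector of signed maximal minors of the $n\times(n-1)$ integer matrix with columns $v_j$, $j\neq i$, divided by the gcd of those minors, and pairing with $v_i$ is then a cofactor expansion producing $\det V$; the identity follows once you know that this gcd equals the index of $\sum_{j\neq i}\mathbb{Z}v_j$ in its saturation (a Smith normal form fact), which is precisely $\operatorname{mult}(v_1,\ldots,\widehat{v_i},\ldots,v_n)$ because the $v_j$ are assumed primitive. The paper instead runs the change-of-lattice argument that you mention only as an afterthought: reducing to $i=1$ by symmetry, it chooses a $\mathbb{Z}$-basis $v_2',\ldots,v_n'$ of the saturated facet lattice $\langle v_2,\ldots,v_n\rangle_{\mathbb{R}}\cap\mathbb{Z}^n$, extends by some $v_1'$ to a basis of $\mathbb{Z}^n$, uses primitivity of $w_1$ to get $|\langle w_1,v_1'\rangle|=1$, writes $v_1=c_1v_1'+\sum_{j\geq 2}c_jv_j$ so that $\langle w_1,v_1\rangle=c_1\langle w_1,v_1'\rangle$, and establishes $\operatorname{mult}(v_1,\ldots,v_n)=|c_1|\operatorname{mult}(v_1',v_2,\ldots,v_n)=|c_1|\operatorname{mult}(v_2,\ldots,v_n)$ by scaling and column operations on multiplicities, with no determinant identities or Smith normal form. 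Your cofactor route buys an explicit closed formula for $w_i$ (useful computationally) and handles all indices symmetrically, at the price of carefully justifying the gcd-equals-index fact you rightly flag as the delicate point; the paper's route stays entirely within the lattice-index calculus used throughout its Section~5 and needs only primitivity and bilinearity. Two small remarks: your opening identity that $W^{\mathsf{T}}V$ is diagonal with $\det(W^{\mathsf{T}}V)=\prod_{i}\langle w_i,v_i\rangle$ is correct but not actually needed, since it yields only the product and not the individual factors; and your positivity argument is sound, as $w_i\in\operatorname{Cone}(v_1,\ldots,v_n)^{\vee}$ gives $\langle w_i,v_i\rangle\geq 0$, while equality would make $w_i$ vanish on a spanning set of $\mathbb{R}^n$ and hence be zero.
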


\begin{proof}
By symmetry, it is enough to prove the formula in the case $i=1$.
Let $v_2',\ldots,v_n' \in \mathbb{Z}^n$ be a  $\mathbb{Z}$-basis of the lattice $\langle v_2,\ldots,v_n \rangle_{\mathbb{R}}\cap \mathbb{Z}^n$.
Let $v_1'\in \mathbb{Z}^n$ be such that $v_1',\ldots,v_n'$ is a $\mathbb{Z}$-basis of $\mathbb{Z}^n$.
Notice that $\operatorname{mult}(v_1',v_2,\ldots,v_n)=\operatorname{mult}(v_2,\ldots,v_n)$.
Since $w_1$ is primitive, there exists a vector $u \in \mathbb{Z}^n$ such that $\langle w_1,u\rangle=1$.
Since $u= a_1 v_1'+\cdots+ a_n v_n' $ for some $a_i \in \mathbb{Z}$, we have $1 = \langle w_1,u\rangle= a_1 \langle w_1,v_1'\rangle $, and then $|\langle w_1,v_1'\rangle|=1$.
We know that $v_1=c_1v_1'+c_2v_2+\cdots+c_nv_n$ for some $c_i\in \mathbb{Q}$. Hence $\langle w_1,v_1\rangle =c_1 \langle w_1, v_1'\rangle$.    
Let $m$ be a positive integer, divisible enough such that $mc_i\in \mathbb{Z}$ for all $i$. Then we have
\begin{align*}
\operatorname{mult}(v_1,\ldots,v_n)
&=
\frac{1}{m}\operatorname{mult}(m v_1,v_2,\ldots,v_n)
=
\frac{1}{m} \operatorname{mult}(mc_1v_1'+mc_2v_2+\cdots+mc_nv_n,v_2,\ldots,v_n)  \\
&=\frac{1}{m}   
\operatorname{mult}(mc_1v_1',v_2,\ldots,v_n)
=
\frac{|mc_1|}{m} \operatorname{mult}(v_1',v_2,\ldots,v_n)      \\
&= 
|c_1| \operatorname{mult}(v_1',v_2,\ldots,v_n).
\end{align*}
Therefore,
\[
\langle w_1,v_1\rangle 
=\langle w_1,v_1'\rangle c_1
=|\langle w_1,v_1'\rangle| |c_1| 
=|\langle w_1,v_1'\rangle| \frac{\operatorname{mult}(v_1,\ldots,v_n)}{\operatorname{mult}(v_1',v_2,\ldots,v_n)}
=
\frac{\operatorname{mult}(v_1,\ldots,v_n)}{\operatorname{mult}(v_2,\ldots,v_n)}.      
 \qedhere
\]  
\end{proof}

Using multiplicities we can write a simple expression for the class of a given element of a lattice on the quotient of the lattice by the sublattice associated to a codimension-one cone.

\begin{lemma}    \label{definition.equivalence}
Let $\tau$ be a codimension-one cone in $N_\mathbb{R}$ and let $v_0 \in N \smallsetminus N_{\tau}$. 
If $s_0 \cdot e \in \mathbb{Z} \cdot e \cong N/N_{\tau}$ is the class of the image of $v_0$, where the generator $e$ is chosen such that $s_0 \in \mathbb{Z}^+$, then \[
s_0=\frac{\operatorname{mult}(\tau,v_0)}{\operatorname{mult}(\tau)}.
\]
\end{lemma}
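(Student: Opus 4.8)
The plan is to deduce the formula from the multiplicativity of lattice indices along the short exact sequence $0 \to N_\tau \to N \to N/N_\tau \to 0$. First I would record that $N_\tau = N \cap \langle \tau \rangle_{\mathbb{R}}$ is saturated in $N$, so that the quotient $N/N_\tau$ is torsion-free; being of rank $n-(n-1)=1$, it is isomorphic to $\mathbb{Z}$. This is exactly what lets us choose the generator $e$ and write the image of $v_0$ as $s_0 \cdot e$ with $s_0 \in \mathbb{Z}^+$.

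Next I would introduce the two sublattices whose indices are the multiplicities in question. Writing $v_1, \ldots, v_m$ for the primitive generators of the rays of $\tau$, set $\Lambda := \mathbb{Z}v_1 + \cdots + \mathbb{Z}v_m \subseteq N_\tau$ and $\Lambda' := \Lambda + \mathbb{Z}v_0 \subseteq N$, so that $\operatorname{mult}(\tau) = [N_\tau : \Lambda]$ and $\operatorname{mult}(\tau, v_0) = [N : \Lambda']$. The key intermediate step is the claim $\Lambda' \cap N_\tau = \Lambda$: the inclusion $\supseteq$ is immediate, and for $\subseteq$ I would write a general element of $\Lambda'$ as $a_0 v_0 + \sum_{i} a_i v_i$ and observe that lying in $N_\tau$ forces its image $a_0 s_0\, e$ in $N/N_\tau$ to vanish, whence $a_0 = 0$ since $s_0 \neq 0$.

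With the claim established, the inclusion $\Lambda' \hookrightarrow N$ restricts to a morphism of short exact sequences from $0 \to \Lambda' \cap N_\tau \to \Lambda' \to \operatorname{im}(\Lambda') \to 0$ into $0 \to N_\tau \to N \to N/N_\tau \to 0$, where $\operatorname{im}(\Lambda')$ denotes the image of $\Lambda'$ in $N/N_\tau$ and all three vertical maps are injective. Since the vertical kernels vanish, the snake lemma produces the exact sequence of finite cokernels $0 \to N_\tau/(\Lambda' \cap N_\tau) \to N/\Lambda' \to (N/N_\tau)/\operatorname{im}(\Lambda') \to 0$, and comparing orders gives the index identity
\[
[N : \Lambda'] = [N_\tau : \Lambda' \cap N_\tau]\cdot [\,N/N_\tau : \operatorname{im}(\Lambda')\,].
\]
Finally I would evaluate the two factors on the right: by the claim $[N_\tau : \Lambda' \cap N_\tau] = [N_\tau : \Lambda] = \operatorname{mult}(\tau)$, while $\operatorname{im}(\Lambda')$ is generated by the images of $v_1, \ldots, v_m, v_0$, that is, by $s_0\, e$ alone, so $[N/N_\tau : \operatorname{im}(\Lambda')] = s_0$. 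Substituting yields $\operatorname{mult}(\tau, v_0) = \operatorname{mult}(\tau)\cdot s_0$, which rearranges to the desired formula.

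I expect the one genuinely delicate point to be the index-multiplicativity step: one must note that the naive tower $\Lambda \subseteq N_\tau \subseteq N$ is useless because $N_\tau$ has infinite index in $N$, and instead track the three \emph{finite} cokernels through the snake lemma so that their orders multiply. The remaining ingredients — the saturation of $N_\tau$, the intersection claim, and the computation $\operatorname{im}(\Lambda') = s_0\mathbb{Z}e$ — are routine.
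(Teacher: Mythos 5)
Your proof is correct, but it reaches the identity $\operatorname{mult}(\tau,v_0)=s_0\cdot\operatorname{mult}(\tau)$ by a genuinely different route from the paper's. The paper argues by an explicit change of generators: it chooses a lift $v_0'\in N$ of the generator $e$, writes $v_0=s_0v_0'+c_1v_1+\cdots+c_rv_r$ with $c_i\in\mathbb{Q}$, clears denominators by a sufficiently divisible $m\in\mathbb{Z}^+$, and then applies the same determinant-style rules for $\operatorname{mult}$ used in the proof of Lemma~\ref{lemma.multiplicities} (homogeneity in one argument, invariance under adding an integral combination of the other generators) to obtain $\operatorname{mult}(\tau,v_0)=s_0\cdot\operatorname{mult}(v_0',v_1,\ldots,v_r)=s_0\cdot\operatorname{mult}(\tau)$; the last equality holds because $v_0'$ splits $N$ as $N_\tau\oplus\mathbb{Z}v_0'$, which is the paper's implicit substitute for your index-multiplicativity step. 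You avoid choosing a lift altogether: the snake-lemma identity $[N:\Lambda']=[N_\tau:\Lambda'\cap N_\tau]\cdot[N/N_\tau:\operatorname{im}(\Lambda')]$, combined with your intersection claim $\Lambda'\cap N_\tau=\Lambda$ (whose verification via the vanishing of $a_0s_0\,e$ is correct and needs no uniqueness of the representation) and the computation $\operatorname{im}(\Lambda')=s_0\mathbb{Z}e$, yields the formula with no rational coefficients and no denominator-clearing. What the paper's computation buys is uniformity with the adjacent Lemma~\ref{lemma.multiplicities}, whose proof uses identical manipulations; what yours buys is that the one genuine input --- multiplicativity of indices along $0\to N_\tau\to N\to N/N_\tau\to 0$, delicate precisely because $[N:N_\tau]$ is infinite, as you rightly flag --- is isolated and handled structurally, and your reading $\operatorname{mult}(\tau,v_0)=[N:\Lambda+\mathbb{Z}v_0]$ with $v_0$ as given (not replaced by a primitive multiple) is exactly how the paper's own proof and its application in Lemma~\ref{lemma.intersection} use the notation.
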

\begin{proof}
Let $v_1,\dots,v_r$ be the primitive generators of the rays of $\tau$ 
and let $v_0' \in N$ be a lifting of $e$.
We know that $v_0=s_0v_0'+c_1v_1+c_2v_2+\cdots+c_rv_r$ for some not necessarily unique  $c_i\in \mathbb{Q}$.
Choose $m \in \mathbb{Z}^+$ sufficiently divisible such that $mc_i \in \mathbb{Z}$ for all $i$.
Then we have
\begin{align*}
\operatorname{mult}(\tau,v_0) &=\operatorname{mult}(v_0,v_1,\ldots,v_r)
=
\frac{1}{m}\operatorname{mult}(mv_0, v_1,v_2,\ldots,v_r)
\\
&=\frac{1}{m}   
\operatorname{mult}(ms_0v_0',v_1,\ldots,v_r)
=
\frac{ms_0}{m} \operatorname{mult}(v_0',v_1,\ldots,v_r)      \\
&= 
s_0 \cdot \operatorname{mult}(v_1,\ldots,v_r)
=
s_0 \cdot \operatorname{mult}(\tau).  \qedhere
\end{align*}
\end{proof}


We can also use multiplicities to evaluate some intersection numbers on toric varieties. Related statements can be found in \cite[Section 1.6]{laterveer1996} and \cite[Lemma 2]{Pay06}.  

\begin{lemma}\label{lemma.intersection}
Let $D$ be a Cartier $T$-divisor  on a complete toric variety $X$. For each maximal cone $\sigma\in \Delta$, let $u_\sigma\in M_\mathbb{Z}$ be the lattice point such that $\psi_D(v)=\langle u_\sigma, v\rangle$, for any $v\in \sigma$. Let $\sigma_1$ and $\sigma_2$ be any maximal cones sharing a facet $\tau$. Then for any primitive generator $v_0$ of a ray of $\sigma_2$ not in $\sigma_1$, we have the intersection number
\[D\cdot V(\tau)=\langle u_{\sigma_1}-u_{\sigma_2},v_0\rangle \cdot \frac{\operatorname{mult}(\tau)}{\operatorname{mult}(\tau,v_0)}.\]
\end{lemma}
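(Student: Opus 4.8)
The plan is to compute the intersection number $D \cdot V(\tau)$ by combining the projection formula with the standard toric description of curves $V(\tau)$ associated to codimension-one cones $\tau$. The curve $V(\tau)$ is a complete toric curve, hence isomorphic to $\mathbb{P}^1$, and its degree as measured against $D$ is captured by how the piecewise linear function $\psi_D$ changes across the wall $\tau$ separating $\sigma_1$ and $\sigma_2$. The key point is that $\psi_D$ is given by $\langle u_{\sigma_1}, \cdot \rangle$ on $\sigma_1$ and by $\langle u_{\sigma_2}, \cdot \rangle$ on $\sigma_2$, and these two linear functionals agree on $\tau$. Therefore the difference $u_{\sigma_1} - u_{\sigma_2}$ annihilates the lattice $N_\tau$, so it descends to a well-defined functional on the rank-one quotient $N/N_\tau$.

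First I would recall the standard intersection formula for toric varieties. Choosing $v_0$ a primitive generator of the ray of $\sigma_2$ not lying in $\sigma_1$, one has the class of $v_0$ in $N/N_\tau \cong \mathbb{Z}\cdot e$ written as $s_0 \cdot e$ with $s_0 \in \mathbb{Z}^+$; by Lemma~\ref{definition.equivalence} we know $s_0 = \operatorname{mult}(\tau,v_0)/\operatorname{mult}(\tau)$. The classical wall-crossing formula for the self-intersection/degree of $V(\tau)$ against a Cartier divisor $D$ states that $D \cdot V(\tau)$ equals the evaluation of the change in the linear functional across the wall on the generator $e$ of the quotient lattice. Since $u_{\sigma_1} - u_{\sigma_2}$ vanishes on $N_\tau$, it induces a functional on $N/N_\tau$, and its value on $e$ times the appropriate normalization gives the degree. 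Concretely, because $v_0 \equiv s_0 e \pmod{N_\tau}$, we have $\langle u_{\sigma_1} - u_{\sigma_2}, v_0 \rangle = s_0 \cdot \langle u_{\sigma_1} - u_{\sigma_2}, e \rangle$, where the right pairing is the induced one on $N/N_\tau$.

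Next I would assemble these pieces. The intersection number $D \cdot V(\tau)$ equals $\langle u_{\sigma_1} - u_{\sigma_2}, e \rangle$ (the degree is exactly the induced functional evaluated on the positive generator $e$, with $v_0$ lying on the $\sigma_2$ side so that the sign works out to give a nonnegative number when $D$ is nef). Solving $\langle u_{\sigma_1}-u_{\sigma_2}, e \rangle = \langle u_{\sigma_1}-u_{\sigma_2}, v_0 \rangle / s_0$ and substituting $s_0 = \operatorname{mult}(\tau,v_0)/\operatorname{mult}(\tau)$ yields
\[
D \cdot V(\tau) = \langle u_{\sigma_1} - u_{\sigma_2}, v_0 \rangle \cdot \frac{\operatorname{mult}(\tau)}{\operatorname{mult}(\tau,v_0)},
\]
which is exactly the claimed formula.

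The main obstacle I anticipate is pinning down the correct sign convention and normalization in the wall-crossing formula, namely verifying that the degree $D \cdot V(\tau)$ is given by the induced functional on the positive generator $e$ rather than its negative, and ensuring the choice of $v_0$ on the $\sigma_2$-side is consistent with the orientation implicit in writing $u_{\sigma_1} - u_{\sigma_2}$. One clean way to circumvent delicate sign bookkeeping is to reduce to the quotient fan: the curve $V(\tau)$ is the orbit closure corresponding to $\tau$, and its structure is governed by the image of $\sigma_1$ and $\sigma_2$ in $N/N_\tau \otimes \mathbb{R} \cong \mathbb{R}$, which is a complete fan on $\mathbb{R}$ giving $\mathbb{P}^1$. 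Restricting $\psi_D$ and computing the degree of the restricted bundle on this $\mathbb{P}^1$ via the two slopes on the two half-lines makes the computation transparent and fixes the signs, and the multiplicity factor $\operatorname{mult}(\tau)/\operatorname{mult}(\tau,v_0)$ then enters precisely through Lemma~\ref{definition.equivalence}.
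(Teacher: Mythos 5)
Your proposal is correct and follows essentially the same route as the paper: the paper reduces to the divisor $\tilde D = D + \operatorname{div}(\chi^{u_{\sigma_1}})$, trivial on $U_{\sigma_1}$, and cites Fulton's local formula $D\cdot V(\tau) = a_0/s_0$ with $a_0 = \langle u_{\sigma_1}-u_{\sigma_2}, v_0\rangle$, then substitutes $s_0 = \operatorname{mult}(\tau,v_0)/\operatorname{mult}(\tau)$ via Lemma~\ref{definition.equivalence}, exactly as you do. The only difference is presentational: where the paper cites Fulton [Section 5.1], you sketch a self-contained verification of that same wall-crossing formula by descending $u_{\sigma_1}-u_{\sigma_2}$ to $N/N_\tau$ and computing the degree on the quotient $\mathbb{P}^1$, which is a valid way to fix the signs.
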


\begin{proof}
Let $\tilde{D}$ be the divisor $D+\Div(\chi^{u_{\sigma_1}})$, which is linearly equivalent to $D$.  One can check that $\psi_{\tilde D}=\psi_D-u_{\sigma_1}$. Since $\tilde D$ is a trivial divisor on $U_{\sigma_1}$. Then $\tilde D$ intersects $V(\tau)$ only at the closed point $x_{\sigma_2}$.   
By \cite[Section 5.1]{Fulton}, we compute the intersection number as   
\[D\cdot V(\tau)=\tilde D \cdot V(\tau)=\frac{a_0}{s_0},\]
where $a_0=-\psi_{\tilde D}(v_0)=\langle u_{\sigma_1}-u_{\sigma_2}, v_0\rangle$
and $s_0$ is as in Lemma~\ref{definition.equivalence}.
By Lemma~\ref{definition.equivalence},
$s_0=\frac{\mult(\tau,v_0)}{\mult(\tau)}$  
and the conclusion follows.
\end{proof}


\subsection{Higher concavity of piecewise linear functions}    \label{preliminaries.concavity}
We will now introduce the notion of $k$-concavity for piecewise linear functions on fans.
This definition of $k$-concavity is a weighted variation of the notion of $k$-convexity introduced by Di Rocco in \cite{DiRoccoJets99} to study jet ampleness on smooth toric varieties.

\begin{definition}[$k$-concavity]    \label{definition.concavity}
Let $\Delta$ be a fan on $N_{\mathbb{R}}$ with convex full-dimensional support 
and 
let $\psi: \operatorname{Supp}(\Delta) \rightarrow \mathbb{R}$ be a function that is linear on each cone. 
For each maximal cone $\sigma \in \Delta$, let $u_{\sigma} \in M_{\mathbb{R}}$ be such that 
$\psi(v)=\langle u_{\sigma},v \rangle$, for each $v \in \sigma$.
Given $k \in { \mathbb{Q}_{\geq 0}}$, we will say that the function $\psi$ is 
$k$-concave if given any two maximal cones $\sigma_1$ and $\sigma_2$ sharing a facet $\tau$, and each primitive generator $v_0$ of a ray of $\sigma_2$ not in $\sigma_1$, we have
\begin{equation}   \label{definition.inequality}
\langle u_{\sigma_1}, v_0 \rangle   
\  \geq   \
\psi(v_0) + k \cdot s_0,
\end{equation}
where if $e$ denotes the generator of the one-dimensional lattice $N/N_{\tau}$ such that the image of $v_0$ in $N/N_{\tau}$ is a positive multiple of $e$, then $s_0$ is the integer such that $v_0$ maps to $s_0\cdot e$ in $N/N_{\tau}$.
\end{definition}

\begin{remark}\label{remark.definition}
\begin{enumerate}[wide, labelwidth=!, labelindent=0pt]
    \item[\textnormal{($a$)}] The inequality (\ref{definition.inequality}) in Definition~\ref{definition.concavity} can alternatively be presented as
\[
\langle u_{\sigma_1}, v_0 \rangle   
\  \geq   \
\psi(v_0) + k \cdot \frac{\operatorname{mult}(\tau,v_0)}{\operatorname{mult}(\tau)}.
\]
since $s_0 = {\operatorname{mult}(\tau,v_0)}/{\operatorname{mult}(\tau)}$ by Lemma~\ref{definition.equivalence}.

\item[\textnormal{($b$)}] In the setting of Definition~\ref{definition.concavity}, for fixed $\sigma_1$, $\sigma_2$ and $\tau$, 
the inequalities in (\ref{definition.inequality}) hold for each $v_0$ if and only if they hold for a single $v_0$, since the quantity \[
\frac{1}{s_0}\left[{\langle u_{\sigma_1}, v_0 \rangle} - \psi(v_0) \right] 
=
\frac{1}{s_0}{\langle u_{\sigma_1}-u_{\sigma_2}, v_0 \rangle}
=
{\langle u_{\sigma_1}-u_{\sigma_2}, e \rangle}
\]
is constant as $v_0$ ranges over the primitive generators of the rays of $\sigma_2$ not in $\sigma_1$. 
\end{enumerate}
\end{remark}

\begin{remark} The following properties of $k$-concavity follow at once from the definition.
\begin{itemize}[leftmargin=5mm]
    \item[-] If $\psi$ is $k$-concave for some $k\in {\mathbb{Q}_{\geq 0}}$, then $\psi$ is $k'$-concave for all {rational numbers} $0 \leq k' \leq k$.
    \item[-] If $\psi_1$ and $\psi_2$ are respectively $k_1$-concave and $k_2$-concave for some $k_1,k_2 \in {\mathbb{Q}_{\geq 0}}$, then $\psi_1+\psi_2$ is $(k_1+k_2)$-concave.
    \item[-] If $\psi$ is $k$-concave for some $k\in {\mathbb{Q}_{\geq 0}}$ and $m \in {\mathbb{Q}_{\geq 0}}$, then $m\psi$ is $(mk)$-concave.
\end{itemize}
\end{remark}


The following lemma shows that for the piecewise linear function $\psi_D$ associated to an ample $T$-divisor $D$ on a projective toric variety,  
our notion of $k$-concavity in Definition~\ref{definition.concavity} agrees with that of $k$-convexity {introduced in \cite[Definition 3.2]{DiRoccoJets99} when the toric variety is smooth and $k \in \mathbb{Z}_{\geq 0}$}.


\begin{lemma}
Let $\Delta$ be a smooth projective fan on $N_{\mathbb{R}}$ and 
let $\psi_D: \operatorname{Supp}(\Delta)\rightarrow \mathbb{R}$ be the piecewise linear function associated to an ample $T$-divisor $D$ on $X(\Delta)$.
For each maximal cone $\sigma \in \Delta$, let $u_{\sigma} \in M_{\mathbb{R}}$ be such that 
$\psi_D(v)=\langle u_{\sigma},v \rangle$, for each $v \in \sigma$.
Then, the function $\psi_D$ is $k$-concave if and only if for any maximal cone $\sigma \in \Delta$ and every $v_0 \in N$ such that $v_0 \notin \sigma$, one has
\begin{equation}  \label{inequalities.lemma}
\langle u_{\sigma}, v_0 \rangle   
\  \geq   \
\psi_D(v_0) + k.
 \end{equation}
\end{lemma}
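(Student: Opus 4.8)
The plan is to prove the two implications of the equivalence separately, after first recording the one feature of the smooth case that makes them comparable: smoothness forces the weight $s_0$ in Definition~\ref{definition.concavity} to equal $1$. Indeed, if $\sigma_1,\sigma_2$ are adjacent maximal cones of a smooth fan sharing a facet $\tau$ and $v_0$ is the primitive generator of the ray of $\sigma_2$ not in $\sigma_1$, then the generators of $\tau$ together with $v_0$ form a $\mathbb{Z}$-basis of $N$, so $\operatorname{mult}(\tau)=\operatorname{mult}(\tau,v_0)=1$ and hence $s_0=\operatorname{mult}(\tau,v_0)/\operatorname{mult}(\tau)=1$ by Lemma~\ref{definition.equivalence}. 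Thus the defining inequality (\ref{definition.inequality}) of $k$-concavity for such a pair reads exactly $\langle u_{\sigma_1},v_0\rangle\geq \psi_D(v_0)+k$, which is precisely the instance of (\ref{inequalities.lemma}) in which $v_0$ is this primitive ray generator. Applying (\ref{inequalities.lemma}) to each such $v_0$ (each lies outside $\sigma_1$) therefore gives the implication ``(\ref{inequalities.lemma})$\Rightarrow k$-concave'' immediately.

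For the converse I would assume $\psi_D$ is $k$-concave, fix a maximal cone $\sigma$ and a lattice point $v_0\in N\setminus\sigma$, and aim for $\langle u_\sigma,v_0\rangle\geq \psi_D(v_0)+k$. The natural but delicate approach is to connect a point of $\sigma$ to $v_0$ and telescope the inequality across the walls the segment meets; keeping all crossings transversal and correctly signed is exactly where such arguments get technical. The key observation that avoids this is that a single wall-crossing already suffices. Writing $\sigma=\operatorname{Cone}(v_1,\ldots,v_n)$ with dual basis $u_1,\ldots,u_n$, the condition $v_0\notin\sigma$ means $\langle u_i,v_0\rangle<0$ for some $i$. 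I fix such an $i$, let $\tau$ be the facet of $\sigma$ opposite $v_i$, and let $\sigma'$ be the unique maximal cone sharing $\tau$ with $\sigma$ (it exists because a projective, hence complete, fan has every facet of a maximal cone shared with exactly one other). Letting $w$ be the primitive generator of the ray of $\sigma'$ not in $\tau$, the difference $u_\sigma-u_{\sigma'}$ vanishes on $N_\tau$, and the image of $v_0$ in $N/N_\tau\cong\mathbb{Z}\cdot[w]$ is the positive integer $c:=-\langle u_i,v_0\rangle\geq 1$.

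Combining these facts finishes the argument. Since $u_\sigma-u_{\sigma'}$ descends to a functional on $N/N_\tau$, I get $\langle u_\sigma-u_{\sigma'},v_0\rangle=c\,\langle u_\sigma-u_{\sigma'},w\rangle$, and the $k$-concavity of $\psi_D$ for the pair $(\sigma,\sigma')$ (with $s_0=1$) gives $\langle u_\sigma-u_{\sigma'},w\rangle=\langle u_\sigma,w\rangle-\psi_D(w)\geq k$, so $\langle u_\sigma-u_{\sigma'},v_0\rangle\geq ck\geq k$. Finally, since $\psi_D$ is concave, $u_{\sigma'}$ is a vertex of $P_D$ and hence $\langle u_{\sigma'},v_0\rangle\geq\psi_D(v_0)$; adding this gives
\[
\langle u_\sigma,v_0\rangle-\psi_D(v_0)=\langle u_\sigma-u_{\sigma'},v_0\rangle+\bigl(\langle u_{\sigma'},v_0\rangle-\psi_D(v_0)\bigr)\geq k+0=k,
\]
as required. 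The only delicate step is the sign bookkeeping—selecting the correct facet, orienting the generator of $N/N_\tau$, and verifying $c\geq 1$—whereas the conceptual crux, and the thing that lets me bypass any multi-step telescoping, is the nonnegativity $\langle u_{\sigma'},v_0\rangle-\psi_D(v_0)\geq 0$ coming from concavity of the support function.
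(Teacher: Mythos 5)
Your proof is correct, and its converse direction takes a genuinely different route from the paper's. The forward implication is the same in both arguments: smoothness forces $s_0=1$, so the defining inequality of Definition~\ref{definition.concavity} is literally the instance of (\ref{inequalities.lemma}) at the primitive ray generator. For the converse, the paper works globally at the other end of the segment: it chooses a maximal cone $\sigma_2$ containing $v_0$, shows (implicitly by the same integer-scaling trick you make explicit with $c=-\langle u_i,v_0\rangle\geq 1$) that the vertices of $P_D$ adjacent to $u_{\sigma_2}$ lie in the half-space $\{u \,:\, \langle u,v_0\rangle \geq \langle u_{\sigma_2},v_0\rangle +k\}$, and then deduces from the convexity of $P_D$ that every vertex other than $u_{\sigma_2}$, in particular $u_{\sigma_1}$, lies there as well. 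You instead work locally at $\sigma_1$ itself: a single wall-crossing through a facet whose dual coordinate is negative on $v_0$, scaled by $c\geq 1$, plus the support-function bound $\langle u_{\sigma'},v_0\rangle\geq\psi_D(v_0)$. This buys a real simplification: you avoid the adjacent-vertices/half-space step altogether, and your argument is insensitive to where $v_0$ sits relative to the walls, whereas the paper's blanket claim that \emph{all} adjacent vertices satisfy the $+k$ inequality is only literally correct when $v_0$ avoids the facets of $\sigma_2$ (if $v_0$ lies on the facet shared with some $\sigma_i'$, then $\langle u_{\sigma_i'},v_0\rangle=\langle u_{\sigma_2},v_0\rangle$, and one must restrict to adjacent cones not containing $v_0$); the paper's version, in exchange, displays the geometry of $P_D$ near the minimizing vertex. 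One cosmetic point: your justification ``since $\psi_D$ is concave, $u_{\sigma'}$ is a vertex of $P_D$'' conflates two facts — $u_{\sigma'}$ is a vertex because $D$ is ample, while $\langle u_{\sigma'},\cdot\rangle\geq\psi_D$ holds because $\psi_D$ is concave (guaranteed by $k$-concavity via Lemma~\ref{zero.concave.same.as.concave}) or, equivalently, because $\psi_D(v)=\min_{u\in P_D}\langle u,v\rangle$ for ample $D$; either justification alone suffices, so the step stands.
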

\begin{proof}
The multiplicities of all cones in a smooth fan are equal to one.
Then clearly, $\psi_D$ is $k$-concave if and only if for any
two maximal cones $\sigma_1$ and $\sigma_2$ sharing a facet $\tau$, 
and every $v_0 \in N \cap \sigma_2$ such that $v_0 \notin \sigma_1$, one has $\langle u_{\sigma_1}, v_0 \rangle \geq 
\psi_D(v_0) + k$.
Then, if $\psi_D$ satisfies the inequalities in (\ref{inequalities.lemma}) it is $k$-concave.
Reciprocally, suppose that the function $\psi_D$ is $k$-concave.
Let $\sigma_1$ be a maximal cone in $\Delta$ and let $v_0$ in $N$ such that $v_0 \notin \sigma_1$. 
Let $\sigma_2$ be a maximal cone in $\Delta$ such that $v_0 \in \sigma_2$.
Since $D$ is ample, as $\sigma$ ranges over the maximal cones in $\Delta$, the points $u_\sigma$ are all distinct and they are the vertices of the polytope $P_D$.
Let $\sigma_1',\ldots,\sigma_n'$ be the maximal cones in $\Delta$ that share a facet with $\sigma_2$.
Since $\psi_D$ is $k$-concave, we have that $\langle u_{\sigma_i'}, v_0 \rangle \geq 
\psi_D(v_0) + k = \langle u_{\sigma_2}, v_0 \rangle + k$, for each $i$.
The points $u_{\sigma_1'},\ldots,u_{\sigma_n'}$ are precisely the vertices of the polytope $P_D$ that share an edge with $u_{\sigma_2}$, and they are all contained in the half-space $\{ u \in M_\mathbb{R} \ | \ \langle u  , v_0  \rangle  \geq \langle u_{\sigma_2}  , v_0  \rangle +k   \}$. 
Then all vertices of $P_D$ except possibly $u_{\sigma_2}$ are in this half-space.
Therefore, $\langle u_{\sigma_1}, v_0 \rangle \geq  \langle u_{\sigma_2}, v_0 \rangle + k  =  \psi_D(v_0) + k$.
\end{proof}


The following lemma shows that $k$-concavity is a strengthening of the notion of concavity.

\begin{lemma}      \label{zero.concave.same.as.concave}
Let $\Delta$ be a fan on $N_{\mathbb{R}}$ 
whose support is convex and full-dimensional
and let $\psi: \operatorname{Supp}(\Delta)\rightarrow \mathbb{R}$ be a function that is linear on each cone.
Then, $\psi$ is $0$-concave if and only if $\psi$ is a concave function. In particular, if $\psi$ is $k$-concave for any $k\in {\mathbb{Q}_{\geq 0}}$, then it is concave.
\end{lemma}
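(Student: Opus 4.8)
The plan is to establish both implications, using throughout that $\psi$ is continuous and that $\psi(v)=\langle u_\sigma,v\rangle$ whenever $v\in\sigma$.

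For the implication ``concave $\Rightarrow$ $0$-concave'', I would first show that each linear piece dominates $\psi$, that is, $\psi(v)\le\langle u_\sigma,v\rangle$ for every maximal cone $\sigma$ and every $v\in\operatorname{Supp}(\Delta)$. Fix $p$ in the (topological) interior of $\sigma$, so that $\psi=\langle u_\sigma,\cdot\rangle$ on a ball around $p$. If $\langle u_\sigma,w\rangle<\psi(w)$ for some $w\in\operatorname{Supp}(\Delta)$, set $q=p+s(p-w)$ with $s>0$ small enough that $q\in\sigma$; then $p=\tfrac{1}{1+s}q+\tfrac{s}{1+s}w$ is a convex combination, and comparing the concavity inequality $\psi(p)\ge\tfrac{1}{1+s}\psi(q)+\tfrac{s}{1+s}\psi(w)$ with the linear identity for $\langle u_\sigma,\cdot\rangle$ (using $\psi(p)=\langle u_\sigma,p\rangle$ and $\psi(q)=\langle u_\sigma,q\rangle$) forces $\langle u_\sigma,w\rangle\ge\psi(w)$, a contradiction. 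Granting the domination, the defining inequality $\langle u_{\sigma_1},v_0\rangle\ge\psi(v_0)$ of $0$-concavity holds at once, since $k=0$.

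For the converse ``$0$-concave $\Rightarrow$ concave'', the plan is to reduce to concavity along segments, which is legitimate since $\operatorname{Supp}(\Delta)$ is convex. Fixing $a,b\in\operatorname{Supp}(\Delta)$, set $d=b-a$ and $g(t)=\psi\bigl((1-t)a+tb\bigr)$; this is a continuous piecewise linear function of $t$, hence concave precisely when its slope is non-increasing at each breakpoint. A breakpoint arises where the segment passes from a maximal cone $\sigma_1$ into an adjacent maximal cone $\sigma_2$ across their common facet $\tau$, and there the slope changes from $\langle u_{\sigma_1},d\rangle$ to $\langle u_{\sigma_2},d\rangle$; so I must check $\langle u_{\sigma_1}-u_{\sigma_2},d\rangle\ge 0$. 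Since $u_{\sigma_1}$ and $u_{\sigma_2}$ agree on $\tau$, the functional $\langle u_{\sigma_1}-u_{\sigma_2},\cdot\rangle$ vanishes on $N_\tau\otimes\mathbb{R}$ and thus factors through the line $N_\mathbb{R}/(N_\tau\otimes\mathbb{R})\cong\mathbb{R}\cdot e$. As the segment crosses from $\sigma_1$ into $\sigma_2$, the image of $d$ in this line is a positive multiple of $e$; and choosing a primitive generator $v_0$ of a ray of $\sigma_2$ not in $\sigma_1$ (which maps to $s_0 e$ with $s_0>0$), the $0$-concavity hypothesis together with part ($b$) of Remark~\ref{remark.definition} gives $\langle u_{\sigma_1}-u_{\sigma_2},e\rangle=\tfrac{1}{s_0}\bigl(\langle u_{\sigma_1},v_0\rangle-\psi(v_0)\bigr)\ge 0$. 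Multiplying by the positive scalar yields $\langle u_{\sigma_1}-u_{\sigma_2},d\rangle\ge 0$, as required, so $g$ is concave. The final assertion then follows because $k$-concavity for any $k\in\mathbb{Q}_{\ge 0}$ implies $0$-concavity, whence concavity.

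I expect the main obstacle to be the globalization step of ensuring the slope comparison covers every breakpoint. A generic segment meets only facets, each transversally and separating two adjacent maximal cones, so the computation above applies directly; a segment meeting faces of codimension $\ge 2$, where several cones come together, I would handle by a density argument, since such degenerate segments form a nowhere dense set and $\psi$ is continuous, so the two-point concavity inequality extends from the generic segments to all pairs $a,b$.
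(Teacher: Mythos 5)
Your proof is correct and follows essentially the same route as the paper's: both directions reduce to generic segments avoiding codimension-two cones via a density-plus-continuity argument, extract the key inequality at each facet crossing from the defining condition (your slope comparison $\langle u_{\sigma_1}-u_{\sigma_2}, e\rangle \geq 0$ is precisely what drives the paper's two-cone case), and handle the converse by perturbing from the interior of a maximal cone along a segment and invoking concavity. The only minor variations are that you verify concavity of the one-variable restriction by slope monotonicity at breakpoints where the paper instead inducts on the number of cones traversed using a gluing fact for concave functions on overlapping intervals, and that you establish the slightly stronger global domination $\psi(v) \leq \langle u_\sigma, v\rangle$ for all $v$ where the paper proves only the needed instance at $v_0$.
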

\begin{proof}
For each maximal cone $\sigma \in \Delta$, let $u_{\sigma} \in M_{\mathbb{R}}$ be such that 
$\psi(v)=\langle u_{\sigma},v \rangle$, for each $v \in \sigma$.

Assume that $\psi$ is $0$-concave.
We want to show that the function $\Phi: \operatorname{Supp}(\Delta) \times \operatorname{Supp}(\Delta) \times [0,1]\longrightarrow \mathbb{R}$ defined by $\Phi(x,y,t)=\psi(tx+(1-t)y)-t\psi(x)-(1-t)\psi(y)$ only takes values greater than or equal to zero.
Let $W$ be the subset of $\operatorname{Supp}(\Delta) \times \operatorname{Supp}(\Delta)$
consisting of the pairs $(x,y)$ such that the line segment joining $x$ and $y$ intersects the interior of each cone that it intersects, but it does not intersect any cone in $\Delta$ of codimension at least two.
By looking at the dimension of its complement, we see that $W$ is dense in $\operatorname{Supp}(\Delta) \times \operatorname{Supp}(\Delta)$. 
Since $\Phi$ is continuous, it is enough to show that $\Phi|_{W \times [0,1]}$ only takes values greater than or equal to zero.
We prove this by induction on the number $r$ of cones whose interior is intersected by the line segment joining the points $x$ and $y$ of a given triple $(x,y,t)$ in $ W \times [0,1]$.
The claim holds for $r=1$ since $\psi$ is linear on each cone.
Let us see the claim also holds for $r=2$. 
For this, it is enough to show the claim for any given $(x,y,t)$ in $W \times [0,1]$, such that the line segment between $x$ and $y$ is contained in two cones $\sigma_1$ and $\sigma_2$ sharing a common facet $\tau$, such that $x \in \sigma_1$ and $y$ in $\sigma_2$.
By symmetry, we may assume that $tx+(1-t)y \in \sigma_2$.
Then we have, 
\begin{align*}
\psi(tx+(1-t)y)
&=
\langle u_{\sigma_2} ,  tx+(1-t)y \rangle 
=
t \langle u_{\sigma_2} , x \rangle + (1-t) \langle u_{\sigma_2} ,  y \rangle   \\
&\geq
t\psi(x)+ (1-t) \langle u_{\sigma_2} ,  y \rangle
=
t\psi(x)+ (1-t) \psi(y).
\end{align*}
For the inductive step, suppose that for some $r\geq 2$ we are given $(x,y,t)$ in $W \times [0,1]$, such that the line segment between $x$ and $y$ intersects the interiors of precisely the maximal cones $\sigma_1,\ldots,\sigma_r$, which are such that each $\sigma_i$ and $\sigma_{i+1}$ share a common facet.
After reparameterization of the linear segment between $x$ and $y$, the conclusion now follows by induction from the simple fact that if $a<b<c<d$ are real numbers and  $f:[a,d]\rightarrow \mathbb{R}$ is a function that is concave on $[a,c]$ and $[b,d]$, then it is concave on the whole $[a,d]$.

Reciprocally, let us assume that $\psi$ is concave.
Let $\sigma_1$ and $\sigma_2$ be two maximal cones sharing a facet 
and let $v_0$ be a primitive generator of a ray of $\sigma_2$ not in $\sigma_1$. 
Let $v_0'$ be a point in the interior of $\sigma_1$ and choose a number $0 < t < 1$ such that $tv_0+(1-t)v_0' \in \sigma_1$.
Then we have,
\begin{align*}
t \langle u_{\sigma_1}  , v_0  \rangle +
(1-t) \langle u_{\sigma_1}  , v_0'  \rangle
&=
\langle u_{\sigma_1}  , tv_0+(1-t)v_0'  \rangle
=
\psi(tv_0+(1-t)v_0')      \\
&\geq  
t \psi(v_0) + (1-t)\psi(v_0')      
=
t \psi(v_0) + (1-t) \langle u_{\sigma_1}  , v_0'  \rangle.
\end{align*}
It follows that $\langle u_{\sigma_1}, v_0 \rangle \geq 
\psi(v_0)$.
Therefore, $\psi$ is $0$-concave.
\end{proof}

Now we relate higher concavity to properties of ample divisors on projective toric varieties. 
{ We refer to Definition~\ref{definition.seshadri} to review the notion of the Seshadri constant.} 

\begin{proposition}    \label{proposition.triple.equivalence}
Let $D$ be an ample Cartier $T$-divisor on a projective toric variety $X=X(\Delta)$. For any nonnegative {rational number} $k$, the following statements are equivalent: 

\begin{enumerate}
    \item \label{condition.intersection} $D \cdot C \geq k$ for each complete $T$-invariant curve $C$ in $X$,
    \item \label{condition.length} Each edge of the polytope $P_D$ has lattice length at least $k$,
    \item \label{condition.concavity} The piecewise linear function $\psi_D$ is $k$-concave, 
    \item \label{condition.seshadri}  { The Seshadri constant $\varepsilon(D)$ is at least $k$.}
\end{enumerate}
\end{proposition}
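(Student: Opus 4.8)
The plan is to prove the chain $(\ref{condition.intersection}) \Leftrightarrow (\ref{condition.concavity})$ and $(\ref{condition.intersection}) \Leftrightarrow (\ref{condition.length})$ by a direct combinatorial dictionary between the walls of the fan and the edges of $P_D$, and then to deduce $(\ref{condition.seshadri})$ from the results on Seshadri constants established in Section~\ref{section.Seshadri}. The single identity driving everything is the intersection formula of Lemma~\ref{lemma.intersection}: if $\sigma_1, \sigma_2$ are maximal cones sharing a facet $\tau$ and $v_0$ is a primitive generator of a ray of $\sigma_2$ not in $\sigma_1$, then, writing $s_0 = \mult(\tau,v_0)/\mult(\tau)$ (Lemma~\ref{definition.equivalence}) and using $\psi_D(v_0) = \langle u_{\sigma_2}, v_0\rangle$ since $v_0 \in \sigma_2$, one has
\[
D \cdot V(\tau) \;=\; \frac{\langle u_{\sigma_1}, v_0\rangle - \psi_D(v_0)}{s_0}.
\]
Every complete $T$-invariant curve is of the form $V(\tau)$ for a wall $\tau$, and conversely, so these are exactly the intersection numbers appearing in $(\ref{condition.intersection})$.

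For $(\ref{condition.intersection}) \Leftrightarrow (\ref{condition.concavity})$ I would argue wall by wall. Using Remark~\ref{remark.definition}$(a)$, the defining inequality of $k$-concavity at the wall $\tau$ reads $\langle u_{\sigma_1}, v_0\rangle \geq \psi_D(v_0) + k s_0$, which upon dividing by $s_0 > 0$ is exactly $D \cdot V(\tau) \geq k$ by the displayed identity. By Remark~\ref{remark.definition}$(b)$ the choice of $v_0$ is immaterial, so $\psi_D$ is $k$-concave precisely when $D \cdot V(\tau) \geq k$ for every wall $\tau$, i.e. precisely when $(\ref{condition.intersection})$ holds.

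For $(\ref{condition.intersection}) \Leftrightarrow (\ref{condition.length})$ I would use the standard duality identifying edges of $P_D$ with walls of $\Delta$: the edge dual to $\tau = \sigma_1 \cap \sigma_2$ joins the vertices $u_{\sigma_1}$ and $u_{\sigma_2}$. Since both $u_{\sigma_i}$ induce $\psi_D$ on $\tau$, the difference $u_{\sigma_1} - u_{\sigma_2}$ lies in the rank-one lattice $\tau^{\perp} \cap M$; writing $u_{\sigma_1} - u_{\sigma_2} = \ell\, m_0$ with $m_0$ a primitive generator of $\tau^{\perp}\cap M$, the lattice length of the edge is $|\ell|$. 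Pairing with $v_0$ and using that $m_0$ pairs to $\pm 1$ with the generator $e$ of $N/N_\tau$ while $v_0 \mapsto s_0 e$ gives $\langle u_{\sigma_1} - u_{\sigma_2}, v_0\rangle = \pm \ell s_0$, so the displayed identity yields $D \cdot V(\tau) = |\ell|$, the lattice length of the edge. Thus the edge lengths and the invariant intersection numbers coincide term by term, giving the equivalence.

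Finally, for $(\ref{condition.seshadri})$ I would invoke Section~\ref{section.Seshadri}: the lower semicontinuity of the Seshadri function reduces the global Seshadri constant to its values at the $T$-invariant points, so $\varepsilon(D) = \min_\sigma \varepsilon(D, x_\sigma)$, and Ito's description \cite[Proposition 3.12, Remark 3.14]{Ito12} identifies $\varepsilon(D, x_\sigma)$ with the minimum of the lattice lengths of the edges of $P_D$ emanating from $u_\sigma$. Combining these with the already-proved equivalence $(\ref{condition.intersection}) \Leftrightarrow (\ref{condition.length})$ gives $\varepsilon(D) = \min_C D\cdot C$, the minimum taken over all complete $T$-invariant curves, from which $(\ref{condition.seshadri}) \Leftrightarrow (\ref{condition.intersection})$ is immediate. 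I expect this last step to be the main obstacle, since it is the only one requiring input beyond the combinatorics of $P_D$: one must match the local Seshadri constant at each fixed point to the lattice length of the shortest edge at $u_\sigma$ and justify the reduction to invariant points. By contrast, the first three equivalences are a formal consequence of Lemma~\ref{lemma.intersection} and the definition of $k$-concavity.
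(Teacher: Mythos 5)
Your proposal is correct and follows essentially the same route as the paper's proof: the equivalence of (\ref{condition.intersection}) and (\ref{condition.concavity}) via the identity of Lemma~\ref{lemma.intersection} together with Remark~\ref{remark.definition}, the identification of each edge's lattice length with $D\cdot V(\tau)$ through the rank-one lattice $\tau^{\perp}\cap M$ and the perfect pairing with $N/N_\tau$ (which the paper establishes by the same lifting argument, noting $D\cdot V(\tau)>0$ by ampleness to fix the sign), and the Seshadri equivalence via Ito's \cite[Proposition~3.12, Remark~3.14]{Ito12} combined with Corollary~\ref{corollary.seshadri.invariant.points}. The only cosmetic difference is that you phrase the primitive-pairing step abstractly while the paper verifies it concretely with an auxiliary vector $w$ satisfying $\langle u, w\rangle = 1$.
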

\begin{proof}
Since (\ref{condition.intersection}), (\ref{condition.length}), (\ref{condition.concavity}) and {(\ref{condition.seshadri})} hold for $k=0$, we assume that {$k >0$}. 
For each maximal cone $\sigma \in \Delta$, let $u_{\sigma} \in M_{\mathbb{R}}$ be such that 
$\psi_D(v)=\langle u_{\sigma},v \rangle$, for each $v \in \sigma$.
We first show that (\ref{condition.intersection}) implies (\ref{condition.concavity}). Assume that $D \cdot C\ge k$ for each complete $T$-invariant curve $C$ in $X$. 
Let $\sigma_1$ and $\sigma_2$ be any two maximal cones sharing a facet $\tau$, $v_0$ be a primitive generator of a ray of $\sigma_2$ not in $\sigma_1$, 
and $s_0$ as before. 
Lemma~\ref{lemma.intersection} implies that 
\[\langle u_{\sigma_1}-u_{\sigma_2},v_0\rangle \cdot \frac{1}{s_0}=D\cdot V(\tau)\ge k.\]
Equivalently, $\langle u_{\sigma_1}-u_{\sigma_2},v_0\rangle \ge k\cdot s_0$.
Note that $\psi_D(v_0)=\langle u_{\sigma_2}, v_0\rangle$. 
This implies that
\[
\langle u_{\sigma_{1}}, v_{0}\rangle
\geq \langle u_{\sigma_{2}}, v_{0}\rangle +k \cdot s_{0}= \psi\left(v_{0}\right)+k \cdot s_{0}.
\]

Now we show that (\ref{condition.concavity}) implies (\ref{condition.intersection}). Let $C$ be a $T$-invariant curve. Since there is a one to one correspondence between $T$-invariant curves and $(n-1)$-dimensional cones. There exists an  $(n-1)$-dimensional cone $\tau$ such that $C=V(\tau)$. Since $X$ is complete, then there are exactly two maximal cones containing $\tau$. We denote them by $\sigma_1$ and $\sigma_2$. The above argument also shows that if $\psi_D$ is $k$-concave, then $D\cdot C=D\cdot V(\tau)\ge k$. 

Now we show that (\ref{condition.intersection}) is equivalent to (\ref{condition.length}).  
Recall that there is a one to one correspondence between the maximal cones in $\Delta$ and the vertices of $P_D$. Any pair of maximal cones $\sigma_1$ and $\sigma_2$, sharing a facet $\tau$, corresponds to an edge of $P_D$, which is equal to the line segment $\overline{u_{\sigma_1}u_{\sigma_2}}$. Let $\gamma$ be the lattice length of this edge. We show that $\gamma=D\cdot V(\tau)$.

Let $v_0 \in N$ be the primitive generator of some ray of $\sigma_2$ not in $\sigma_1$.  
Let $s_0 \cdot e \in \mathbb{Z} \cdot e \cong N/N_{\tau}$ be the class of the image of $v_0$, where the generator $e$ is chosen such that $s_0 \in \mathbb{Z}^+$.
Let $u \in M$ be the primitive vector in the direction $u_{\sigma_1}-u_{\sigma_2}$ and let $v_0' \in N$ be a lifting of $e$.  
Note that $u_{\sigma_1}-u_{\sigma_2} \in \tau^\perp$. 
Since $u$ is primitive, there exists $w \in N$ such that $\langle u, w \rangle=1$. Let $a\cdot e$ be the class of $w$ in $N/N_{\tau}$. 
The images of $w-av_0'$ and $v_0-s_0v_0'$ are zero in $N/N_{\tau}$. Hence $w-av_0', v_0-s_0v_0' \in N_\tau$. 
In particular,  
$1=\langle u, w\rangle =\langle u, av_0'\rangle=a\langle u, v_0'\rangle$, and then $|\langle u, v_0'\rangle| = 1$.  
Notice that $D\cdot V(\tau) > 0$, since $D$ is ample. Now, using Lemmas~\ref{definition.equivalence} and \ref{lemma.intersection}, we have  
$$
\gamma=|\langle \gamma u, v_0'\rangle|=|\langle u_{\sigma_1}-u_{\sigma_2},v_0' \rangle|
=|\langle u_{\sigma_1}-u_{\sigma_2},v_0\rangle| \cdot \frac{1}{s_0}
=|\langle u_{\sigma_1}-u_{\sigma_2},v_0\rangle| \cdot\frac{\mult(\tau)}{\mult(\tau, v_0)}=D\cdot V(\tau).
$$

This proves that (\ref{condition.intersection}) is equivalent to (\ref{condition.length}). 
{ 

Now we show that (\ref{condition.length}) is equivalent to (\ref{condition.seshadri}).
Recall that the Seshadri constant of an ample Cartier $T$-divisor at a $T$-invariant point is the minimum of the lengths of the edges through the corresponding vertex in the associated polytope \cite[Remark~3.14]{Ito12}.
The article \cite{Ito12} is written over the field of complex numbers, but his argument to obtain \cite[Remark~3.14]{Ito12} runs for any algebraically closed field.
Assuming (\ref{condition.length}), then $\min\{ \varepsilon(D;p) \, | \,  p \in X \textnormal{ is $T$-invariant} \} \geq k$ by \cite[Remark~3.14]{Ito12} 
and then 
(\ref{condition.seshadri}) follows by Corollary~\ref{corollary.seshadri.invariant.points}. 
Similarly, if we assume (\ref{condition.seshadri}), then $\min\{ \varepsilon(D;p) \, | \,  p \in X \textnormal{ is $T$-invariant} \} \geq k$, and hence (\ref{condition.length}) follows by  \cite[Remark~3.14]{Ito12}. 
}
\end{proof}


\section{Lower semi-continuity of Seshadri constants on projective toric varieties}   \label{section.Seshadri}

In this section, we establish the lower semi-continuity in the Zariski topology of the Seshadri constant function on projective toric varieties in Theorem~\ref{theorem.lower.semicontinuous}. As a consequence Corollary~\ref{corollary.seshadri.invariant.points} tells us that we can compute the global Seshadri constant from the values at the $T$-invariant points. 
We start by recalling the definition of the Seshadri constant introduced by Demailly.

\begin{definition} \label{definition.seshadri} Let $X$ be a projective variety and $D$ an ample Cartier divisor on $X$. The Seshadri constant $\varepsilon(X,D;p)=\varepsilon(D;p)$ of $D$ at the point $p \in X$ is $\max\{\lambda \in \mathbb{R}_{\geq 0} \, | \, \pi^{*}_{p}D - \lambda E_p \textnormal{ is nef} \} $, where $\pi_p: \operatorname{Bl}_p X \rightarrow X$ is the blowup of $X$ along $p$ and $E_p$ is the exceptional divisor of $\pi_p$. The Seshadri constant $\varepsilon(X,D)=\varepsilon(D)$ of $D$ is $\inf\{ \varepsilon(X, D;p) \, | \,  p \in X\}$. 
\end{definition}


\begin{remark}   \label{remark.fibers}
Let $p$ be a smooth closed point in a variety $X$. 
Let $\pi_1:X\times X \rightarrow X$ be the first projection and let $\pi: \operatorname{Bl}_{\Delta_X} X\times X \rightarrow X \times X$ be the blowup of $X\times X$ along the diagonal $\Delta_X$. 
Then, the scheme theoretic intersection $\pi_1^{-1}(p) \cap \Delta_X$ of the diagonal $\Delta_X$ with the fiber $\pi_1^{-1}(p)$ is the reduced point $(p,p)$ in $X \times X$. 
Also, the fiber $(\pi_{1} \circ \pi)^{-1}(p)$ is isomorphic to the blowup of $\{p \} \times X $ along $\pi_1^{-1}(p) \cap \Delta_X$, hence it is isomorphic to $\operatorname{Bl}_{p} X$.
Moreover, the exceptional divisor of $\pi$ restricts to the exceptional divisor of $\operatorname{Bl}_{p} X$. 
For the proof, one reduces to the case that $X$ is smooth and affine, where these statements easily follow after writing each claim in local coordinates.  
\end{remark}


\begin{lemma}\label{generic-point}
Let $D$ be an ample Cartier $T$-divisor on a projective toric variety $X$. Let $p$ be a smooth closed point of $X$ and $q$ a closed point in the torus $T$ of $X$. Then 
\[\varepsilon(X,D;p)\le \varepsilon(X,D;q).\]
\end{lemma}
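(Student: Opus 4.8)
The plan is to run a family argument over $X$ using the blowup of the diagonal introduced in Remark~\ref{remark.fibers}, exploiting torus symmetry together with the openness of ampleness in families. Write $Y=\operatorname{Bl}_{\Delta_X}(X\times X)$, let $\pi:Y\to X\times X$ be the blowup with exceptional divisor $\mathcal{E}$, and let $f=\pi_1\circ\pi:Y\to X$ be the composition with the first projection. For a rational number $\lambda\ge 0$ set $N_\lambda=\pi^{*}\pi_2^{*}D-\lambda\mathcal{E}$, a $\mathbb{Q}$-Cartier divisor on $Y$. By Remark~\ref{remark.fibers}, for every smooth closed point $x\in X$ the fiber $f^{-1}(x)$ is isomorphic to $\operatorname{Bl}_xX$, the restriction $\mathcal{E}|_{f^{-1}(x)}$ is the exceptional divisor $E_x$, and $\pi_2$ identifies $\pi^{*}\pi_2^{*}D|_{f^{-1}(x)}$ with $\pi_x^{*}D$; hence $N_\lambda|_{f^{-1}(x)}=\pi_x^{*}D-\lambda E_x$ and, by Definition~\ref{definition.seshadri}, $\varepsilon(D;x)=\sup\{\lambda\ge 0 : N_\lambda|_{f^{-1}(x)}\text{ is nef}\}$. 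It therefore suffices to prove that for each rational $0\le\lambda<\varepsilon(D;p)$ the restriction $N_\lambda|_{f^{-1}(q)}$ is nef; letting $\lambda\uparrow\varepsilon(D;p)$ then yields $\varepsilon(D;p)\le\varepsilon(D;q)$.

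Fix such a $\lambda$. First I would upgrade nefness to ampleness on the special fiber: for $\lambda$ strictly below the Seshadri threshold the divisor $\pi_p^{*}D-\lambda E_p$ is not merely nef but ample on $\operatorname{Bl}_pX$, since the ample locus along the ray $\lambda\mapsto\pi_p^{*}D-\lambda E_p$ is the open interval $[0,\varepsilon(D;p))$. Thus $N_\lambda|_{f^{-1}(p)}$ is ample. Next I would restrict the family to the smooth locus $X_{\mathrm{sm}}$, which is open, dense, and contains both $p$ and the torus $T$. Over $X_{\mathrm{sm}}$ the morphism $f$ is proper, and it is flat there because all its fibers are blowups of $X$ at a smooth point and hence share a common Hilbert polynomial. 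Since ampleness of the fiberwise restriction is an open condition in a proper family (applied to a sufficiently divisible integral multiple of $N_\lambda$ to clear denominators), the set $V_\lambda=\{x\in X_{\mathrm{sm}} : N_\lambda|_{f^{-1}(x)}\text{ is ample}\}$ is open, and it contains $p$.

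The torus symmetry finishes the argument. The diagonal $T$-action on $X\times X$ preserves $\Delta_X$, so $T$ acts on $Y$ and both $f$ and $\pi_2$ are $T$-equivariant; because $D$ is a $T$-divisor, $\pi^{*}\pi_2^{*}D$ and $\mathcal{E}$ are $T$-invariant, so $N_\lambda$ is $T$-invariant and the isomorphism $f^{-1}(x)\cong f^{-1}(t\cdot x)$ induced by $t\in T$ carries $N_\lambda$ to $N_\lambda$. Consequently $V_\lambda$ is a $T$-invariant subset of $X_{\mathrm{sm}}$. A nonempty $T$-invariant open subset of $X_{\mathrm{sm}}$ is dense, so it meets $T$; as $T$ acts transitively on itself, its intersection with $T$ must be all of $T$, and therefore $T\subseteq V_\lambda$. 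Since $q\in T$, the restriction $N_\lambda|_{f^{-1}(q)}$ is ample, in particular nef, as required.

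The step I expect to be the main obstacle is the passage from nefness to ampleness on the special fiber together with the verification that the relative ample locus $V_\lambda$ is genuinely open: this requires knowing that $\pi_p^{*}D-\lambda E_p$ is ample for $\lambda<\varepsilon(D;p)$, rather than only nef at the endpoint, and that openness of ampleness applies to the $\mathbb{Q}$-Cartier divisor $N_\lambda$ over the locus where $f$ is proper and flat. By contrast, the $T$-equivariance of the construction and the fact that a nonempty invariant open set swallows the dense torus are routine.
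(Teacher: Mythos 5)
Your proposal is correct in substance and follows the same skeleton as the paper's proof --- the blowup of the diagonal as a family of pointwise blowups (Remark~\ref{remark.fibers}), $T$-equivariance of the whole construction, and the endgame that a nonempty $T$-invariant open subset of the smooth locus meets, hence contains, the torus --- but it differs precisely in the step where the real difficulty lives: how to extract an \emph{open} condition from nefness, which is not open in families. The paper stays with nefness throughout: it introduces an auxiliary divisor $A=\pi^{*}\pi_X^{*}A_X-E$ that is ample on every fiber, writes the fiberwise-nef locus of $G=\pi^{*}\pi_X^{*}D-\left(\varepsilon(X,D;p)-\frac{1}{m}\right)E$ as the countable intersection of the open $T$-invariant loci where $G+\frac{1}{m}A$ is fiberwise ample (via \cite[Theorem 1.2.17]{PAG}), and then uses the finiteness of $T$-invariant open (resp.\ closed) subsets of a toric variety to see that the chains $V_m$ and $N_m$ stabilize, so the relevant locus is genuinely open (resp.\ closed). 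You instead upgrade to ampleness on the distinguished fiber: for each rational $0<\lambda<\varepsilon(X,D;p)$ you claim $\pi_p^{*}D-\lambda E_p$ is ample, apply openness of amplitude once per $\lambda$, conclude $q\in V_\lambda$ for every such $\lambda$ separately, and take the supremum at the end. This genuinely avoids both the auxiliary divisor $A$ and the toric stabilization trick, and is arguably the cleaner route.

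Two points need repair, though neither is fatal. First, your justification of the ampleness upgrade --- ``the ample locus along the ray is the open interval $[0,\varepsilon(D;p))$'' --- is false at $\lambda=0$ (where $\pi_p^{*}D$ is nef but contracts $E_p$) and is otherwise asserted rather than proved; the correct argument is the standard interpolation: since $\mathcal{O}(-E_p)$ is $\pi_p$-ample, $b\,\pi_p^{*}D-E_p$ is ample for $b\gg 0$, and for $\frac{1}{b}<\lambda<\varepsilon(X,D;p)$ one writes $\pi_p^{*}D-\lambda E_p$ as $t\bigl(\pi_p^{*}D-\tfrac{1}{b}E_p\bigr)+(1-t)\bigl(\pi_p^{*}D-\varepsilon(X,D;p)E_p\bigr)$ with $t\in(0,1)$, an ample-plus-nef combination, hence ample (the case $\lambda\le\frac{1}{b}$ being similar, and $\lambda=0$ needing only the trivial nefness of $\pi_q^{*}D$). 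Second, the flatness of $f$ over the smooth locus is both inadequately justified (you never show the varying blowups share a Hilbert polynomial) and unnecessary: the openness-of-amplitude theorem you need, \cite[Theorem 1.2.17]{PAG} --- the very statement the paper cites --- requires only a proper morphism, which is exactly how the paper applies it to $\pi_U\circ\pi$ with no flatness hypothesis.
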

\begin{proof}
Let $U$ be the smooth locus of $X$. 
Let $\Delta_U$ be the intersection of the diagonal of $X \times X$ with $U \times X$, considered as a closed subvariety of $U \times X$. 
Let $\pi: \operatorname{Bl}_{\Delta_U} (U \times X) \rightarrow U \times X$ be the blowup of $U \times X$ along $\Delta_U$ and $E$ be the exceptional divisor of $\pi$.
Let $\pi_U$ and $\pi_X$ be the projection morphisms from $U \times X$ onto $U$ and $X$, respectively. 
If we consider the diagonal action of $T$ on $U \times X$, then $\Delta_U$ is $T$-invariant. 
Hence, we get an induced $T$-action on $\operatorname{Bl}_{\Delta_U} (U \times X)$. 
The morphisms $\pi$, $\pi_U$ and $\pi_X$ are $T$-equivariant with respect to these actions. 
%
%
Fix a $T$-invariant Cartier divisor $A_X$ on $X$, sufficiently ample so that the $T$-invariant divisor $A=\pi^*\pi_X^*A_X-E$ on $\operatorname{Bl}_{\Delta_U} (U \times X)$ is ample when restricted to each fiber of the projective surjective morphism $\pi_U \circ \pi$.  

Let $G$ be a $\mathbb{Q}$-Cartier $T$-invariant $\mathbb{Q}$-divisor on $\operatorname{Bl}_{\Delta_U} (U \times X)$ whose support does not contain any of the fibers of $\pi_U \circ \pi$.
For each $m \in \mathbb{Z}^+$, the set $V_m$ consisting of the points $x \in U$ such that $\left.\left(G+\frac{1}{m}A\right)\right|_{(\pi_U \circ \pi)^{-1}(x)}$ is ample on $(\pi_U \circ \pi)^{-1}(x)$ is $T$-invariant and it is open by \cite[Theorem 1.2.17]{PAG}. 
Notice that $G|_{(\pi_U \circ \pi)^{-1}(x)}$ is nef on $(\pi_U \circ \pi)^{-1}(x)$ if and only if $x \in \bigcap_{m \in \mathbb{Z}^+} V_m$. 
We have that $V_m \supseteq V_{m+1}$ for all $m \in \mathbb{Z}^+$ and since we are on the toric variety $U$, the sequence $V_m$ stabilizes to an open $T$-invariant subset $V=\bigcap_{m \in \mathbb{Z}^+} V_m$ of $U$. 
Therefore, the set consisting of the points $x \in U$ such that $G|_{(\pi_U \circ \pi)^{-1}(x)}$ is nef on $(\pi_U \circ \pi)^{-1}(x)$ is open and $T$-invariant.

We can assume that $\varepsilon(X,D;p)>0$ and fix an integer $m_0$ such that $\varepsilon(X,D;p) -  \frac{1}{m_0} > 0$, for example $m_0:= \left\lceil{\frac{1}{\varepsilon(X,D;p)}}\right\rceil$. 
For each integer $m \geq m_0$, the set 
\[
N_m = \left\{ x \in U \, \left| \, \left. \left(\pi^*\pi_X^*D-\left(\varepsilon(X,D;p) -  \frac{1}{m}\right)E\right)\right|_{{(\pi_U \circ \pi)}^{-1}(x)} \textnormal{ is not nef on $(\pi_U \circ \pi)^{-1}(x)$} \right.  \right\}
\]
is $T$-invariant and it is closed in $U$ by the previous paragraph.
Notice that by Remark~\ref{remark.fibers},
$N_m = \{ x \in U \, | \, \varepsilon(X,D;x) < \varepsilon(X,D;p) - \frac{1}{m}  \}$.
We have that $N_m \subseteq N_{m+1}$ for all $m \in \mathbb{Z}^+$ and since we are on the toric variety $U$, the sequence $N_m$ stabilizes to a closed $T$-invariant subset $N$ of $U$, which by construction satisfies $N= \bigcup_{m \geq m_0} N_m =\{ x \in U \, | \, \varepsilon(X,D;x) < \varepsilon(X,D;p) \}$.

Therefore, the nonempty $T$-invariant open set $U \smallsetminus N =\{ x \in U \, | \, \varepsilon(X,D;p) \leq \varepsilon(X,D;x) \}$ intersects the torus, and hence contains the whole torus, as desired. 
\end{proof}


\begin{definition}
Let $P$ be an integral polytope in $M_{\mathbb{R}}$. For each edge $\rho$ in $P$, we denote by $M_\rho$ the lattice $\mathbb{R}(\rho-\rho) \cap M$ in the subspace $\mathbb{R}(\rho-\rho)$ of $M_{\mathbb{R}}$.
We will write $|\cdot|_{M_{\rho}}$ for the lattice length function on $\mathbb{R}(\rho-\rho)$ with respect to $M_{\rho}$.
The lattice length of $\rho$ denoted by $|\rho|_{M_{\rho}}$ is defined to be
the lattice length with respect to $M_{\rho}$ of a translation of $\rho$ contained in $\mathbb{R}(\rho-\rho)$.
For any vertex $v$ of $P$, we define $s(P;v)$ as the minimum of the lattice lengths of the edges through $v$, that is, 
\[
s(P;v):=\min\{|\rho|_{M_{\rho}}~|~ \textnormal{$\rho$ is an edge of $P$ having $v$ as a vertex} \} \in \mathbb{Z}^+.
\]
\end{definition}

Let $D$ be an ample Cartier $T$-divisor on a projective toric variety $X=X(\Delta)$ and $\xi$ a face of the polytope $P_D$. 
We denote by $O_\xi$ the $T$-orbit in $X$ corresponding to $\xi$ and   
we denote by $X_\xi$ the closure of $O_\xi$ in $X$. 
Let $D_\xi$ be an ample Cartier invariant divisor on the toric subvariety $X_{\xi}$ such that $\mathcal{O}_{X_\xi}(D_\xi)=\mathcal{O}_X(D)|_{X_\xi}$. 
Let $\pi_\xi: M_\mathbb{R}\rightarrow M_\mathbb{R}/\mathbb{R}(\xi-\xi)$ be the projection map and set $P'_\xi=\pi_{\xi}(P_D)$ and $v'_\xi=\pi_\xi(\xi)$. Ito gave the following formula to calculate the Seshadri constant of $D$ at every closed point of $X$.


\begin{proposition}[{\cite[Proposition 3.12]{Ito12}}] \label{ito's prop3.12}
Let $\xi$ be a face of an $n$-dimensional integral polytope $P\subset M_{\mathbb{R}}$. Let $X$ be the projective toric variety associated to $P$ and $D$ be the ample Cartier $T$-divisor on $X$ corresponding to $P$. Then, for any $q \in O_\xi$ one has
\[\varepsilon(X,D;q)=\min\{\varepsilon(X_\xi,D_\xi;q),s(P'_\xi;v'_\xi)\}.\]
\end{proposition}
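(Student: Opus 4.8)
The plan is to evaluate $\varepsilon(X,D;q)$ through its description as an infimum over curves,
\[
\varepsilon(X,D;q)=\inf_{C\ni q}\frac{D\cdot C}{\mult_q C},
\]
where $C$ ranges over the irreducible curves of $X$ passing through $q$ (see \cite[Proposition~5.1.5]{PAG}). Since $O_\xi$ is a single orbit of the torus and $D$ is $T$-invariant, the torus action carries the blowup at one point of $O_\xi$ to the blowup at any other compatibly with $D$, so $\varepsilon(X,D;q)$ is independent of $q\in O_\xi$ and I may fix a convenient point. The guiding principle is that the curves through $q$ fall into two families, those contained in the orbit closure $X_\xi$ and those that are not, and that these families are governed respectively by $\varepsilon(X_\xi,D_\xi;q)$ and by the combinatorial quantity $s(P'_\xi;v'_\xi)$; I would establish the inequalities $\leq$ and $\geq$ separately.

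For the inequality $\varepsilon(X,D;q)\leq\min\{\varepsilon(X_\xi,D_\xi;q),s(P'_\xi;v'_\xi)\}$ I would produce test curves realizing each bound. Any irreducible curve $C\subseteq X_\xi$ through $q$ satisfies $D\cdot C=D_\xi\cdot C$ because $\CO_{X_\xi}(D_\xi)=\CO_X(D)|_{X_\xi}$, and its multiplicity at $q$ is intrinsic to $C$, so taking the infimum over such curves gives the first bound. For the second bound, let $X'_\xi$ be the projective toric variety attached to the lattice polytope $P'_\xi$, let $D'_\xi$ be its ample Cartier $T$-divisor, and let $x'$ be the $T$-fixed point of $X'_\xi$ corresponding to $v'_\xi$; the $T$-invariant curves through $x'$ have $D'_\xi$-degrees equal to the lattice lengths of the edges of $P'_\xi$ through $v'_\xi$. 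Lifting the shortest such edge to a curve through $q$ transversal to $X_\xi$ realizes the value $s(P'_\xi;v'_\xi)$.

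For the reverse inequality I would bound the ratio $\frac{D\cdot C}{\mult_q C}$ from below for an arbitrary irreducible $C\ni q$. If $C\subseteq X_\xi$, then as above the ratio is at least $\varepsilon(X_\xi,D_\xi;q)$. If $C\not\subseteq X_\xi$, I would use the lattice projection $\pi_\xi\colon M_\mathbb{R}\to M_\mathbb{R}/\mathbb{R}(\xi-\xi)$, which near $O_\xi$ corresponds to projecting $X$ onto the transversal toric variety $X'_\xi$, sending $q$ to $x'$ and $D$ to $D'_\xi$. Such a curve cannot collapse to $x'$ (its preimage is $X_\xi$), so it maps to a genuine curve $C'$ through $x'$, and the aim is to show
\[
\frac{D\cdot C}{\mult_q C}\ \geq\ \frac{D'_\xi\cdot C'}{\mult_{x'} C'}\ \geq\ \varepsilon(X'_\xi,D'_\xi;x')\ =\ s(P'_\xi;v'_\xi),
\]
where the last equality is the $T$-fixed-point formula for Seshadri constants in \cite[Remark~3.14]{Ito12}. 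Combining the two cases yields $\varepsilon(X,D;q)\geq\min\{\varepsilon(X_\xi,D_\xi;q),s(P'_\xi;v'_\xi)\}$.

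The hard part will be making the transversal estimate of the previous paragraph rigorous, since $\pi_\xi$ induces only a rational map $X\dashrightarrow X'_\xi$ and one must control its indeterminacy along $C$ together with the behavior of intersection numbers and of the multiplicity at $q$. The cleanest route I envisage is to work in the \'etale-local product structure of $X$ along $O_\xi$, in which a neighborhood of $q$ is identified with a product of $O_\xi$ with the affine chart of $X'_\xi$ about $x'$; in these coordinates the transversal part of $C$ is literally a curve in that chart, and the two displayed inequalities become the statements that projecting away the $O_\xi$-directions does not increase the multiplicity and satisfies $D\cdot C\geq D'_\xi\cdot C'$. Verifying these comparisons, together with the degenerate cases in which $C$ is highly tangent to $X_\xi$, is the technical heart of the argument.
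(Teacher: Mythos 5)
You should first know that the paper does not actually prove this proposition: it is quoted verbatim from \cite[Proposition 3.12]{Ito12}, and the paper's sole contribution is the accompanying remark that Ito's argument works over any algebraically closed field once the very-general-point semicontinuity input \cite[Example 5.1.11]{PAG} is replaced by Lemma~\ref{generic-point}. Your observation that the torus action makes $\varepsilon(X,D;\cdot)$ constant along $O_\xi$ is exactly the right substitute for that step, and your upper bound $\varepsilon(X,D;q)\le\varepsilon(X_\xi,D_\xi;q)$ via curves inside $X_\xi$ (equivalently \cite[Proposition 5.1.5]{PAG}) is sound. But as a proof your text has genuine gaps at precisely the two places where all the content lies, and the assertions made there are not just unverified --- as stated they are problematic. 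The projection $\pi_\xi$ dualizes to an inclusion of lattices $N'\hookrightarrow N$, and since the normal fan of $P'_\xi=\pi_\xi(P_D)$ consists of the cones $\sigma\cap N'_{\mathbb{R}}$, the induced toric \emph{morphism} goes $X'_\xi\to X$, i.e.\ in the wrong direction; your map $X\dashrightarrow X'_\xi$ is only rational. After resolving indeterminacy by $\mu:\tilde X\to X$ with $f:\tilde X\to X'_\xi$, your inequality $D\cdot C\ge D'_\xi\cdot C'$ amounts to $(\mu^*D-f^*D'_\xi)\cdot\tilde C\ge 0$ on the strict transform, and nothing you write supplies this: lattice points of $P'_\xi$ need not lift to lattice points of $P_D$ (already for the segment from $(0,1)$ to $(2,0)$ projected to $[0,2]$, the point $1$ has no lattice lift), so sections of $D'_\xi$ do not pull back into $|D|$, and the \'etale-local product structure near $O_\xi$ does not split $D$ globally. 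Moreover your displayed chain omits the degree $d=\deg(C\to C')$: one has $f_*\tilde C=d\,C'$, so the estimate actually needed is $\mult_q C\le d\cdot\mult_{x'}C'$, refined by local degrees at $q$ when $C$ has several branches or branches tangent to the fiber direction --- exactly the ``degenerate cases'' you set aside, and exactly where a naive ``projection does not increase multiplicity'' claim can fail.

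The second gap is the test curve realizing $s(P'_\xi;v'_\xi)$, which you assert can be obtained by ``lifting the shortest edge'' but never construct. The natural candidates through a non-fixed point $q\in O_\xi$, namely closures of one-parameter-subgroup orbits $\overline{\lambda_v(\mathbbm{k}^*)\cdot q}$ with $v\in N'$, have $D$-degree equal to the lattice width of $P_D$ in the direction $v$, which in general strictly exceeds the lattice length of the corresponding edge of $P'_\xi$; so realizing the ratio $s(P'_\xi;v'_\xi)$ by a curve through $q$ requires a real construction (this, together with the transversal estimate, is where Ito's argument does its work, proceeding by induction over faces $\xi\subset\tau$ with $\dim\tau=\dim\xi+1$, much as in the paper's own proof of Theorem~\ref{theorem.lower.semicontinuous}). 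In short: your outline correctly identifies the two regimes (curves in $X_\xi$ versus transversal curves) and the expected answers in each, and it correctly handles the characteristic-free reduction, but both inequalities specific to the transversal regime --- the lower bound via the rational projection and the upper bound via a lifted edge --- are left as claims whose obvious justifications break down, so the proposal does not yet constitute a proof.
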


\begin{remark} The article \cite{Ito12} is written over the field of complex numbers, but his argument to obtain \cite[Proposition 3.12]{Ito12} runs for any algebraically closed field (one considers points in the torus orbit instead of very general points, and uses Lemma~\ref{generic-point} instead of \cite[Example 5.1.11]{PAG}).
\end{remark}


\begin{theorem}    \label{theorem.lower.semicontinuous}
Let $D$ be an ample Cartier $T$-divisor on a projective toric variety $X=X(\Delta)$. The Seshadri constant function 
\begin{align*}
\varepsilon(D;-):X &\longrightarrow \mathbb{R}\\
 p &\longmapsto \varepsilon(D;p)   
\end{align*}
is lower semi-continuous in the Zariski topology of $X$.  
\end{theorem}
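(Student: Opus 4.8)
The plan is to reduce lower semi-continuity to a monotonicity statement along the orbit stratification, to be established by induction on $\dim X$ with Proposition~\ref{ito's prop3.12} as the main engine. First, $\varepsilon(D;-)$ is lower semi-continuous if and only if every sublevel set $\{p \in X \mid \varepsilon(D;p) \le \lambda\}$ is closed. Because $D$ is a $T$-divisor, the torus acts on $X$ by automorphisms preserving $\mathcal{O}_X(D)$ and, after blowing up a point together with its translate, preserving the formation of the Seshadri constant; hence $\varepsilon(D;-)$ is $T$-invariant and therefore constant on each orbit $O_\xi$, with value $\varepsilon_\xi$ say. Since $X$ is the disjoint union of its finitely many orbits and $\overline{O_\xi}=\bigsqcup_{\eta \subseteq \xi} O_\eta$, where $\eta$ ranges over the faces of $\xi$, each sublevel set is a $T$-stable union of orbits, and such a union is closed exactly when it is stable under specialization. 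Thus the theorem is equivalent to the monotonicity
\[
\eta \subseteq \xi \ \Longrightarrow\ \varepsilon_\eta \le \varepsilon_\xi
\]
for all faces $\eta \subseteq \xi$ of $P_D$; chaining covering relations, it suffices to treat the case in which $\xi$ covers $\eta$.

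I would run the induction on $n=\dim X$, the cases $n \le 1$ being immediate since then $X$ is smooth and $\varepsilon(D;-)$ is constant. Suppose first that $\xi=P_D$, so that $O_\xi$ is the dense torus and $\eta$ is a facet of $P_D$. Then $\eta$ corresponds to a ray of the defining fan, i.e.\ to a one-dimensional cone, which is automatically smooth; hence $O_\eta$ lies in the smooth locus of $X$, and Lemma~\ref{generic-point} gives at once $\varepsilon_\eta \le \varepsilon_\xi$.

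Now suppose $\xi \ne P_D$, pick $q_\eta \in O_\eta$ and $q_\xi \in O_\xi$, and observe that $O_\eta,O_\xi \subseteq X_\xi$ with $q_\xi$ in the dense torus of $X_\xi$. By Proposition~\ref{ito's prop3.12},
\[
\varepsilon_\eta = \min\{\varepsilon(X_\eta,D_\eta;q_\eta),\, s(P'_\eta;v'_\eta)\},
\qquad
\varepsilon_\xi = \min\{\varepsilon(X_\xi,D_\xi;q_\xi),\, s(P'_\xi;v'_\xi)\}.
\]
To control the first term of $\varepsilon_\xi$, I would apply Ito's formula inside $X_\xi$ at $q_\eta$ and use that $\pi_\eta(\xi)$ is a face of $\pi_\eta(P_D)=P'_\eta$ through the common vertex $v'_\eta$; its edges at $v'_\eta$ are then a subset of those of $P'_\eta$, so $s(\pi_\eta(\xi);v'_\eta)\ge s(P'_\eta;v'_\eta)$, whence $\varepsilon_\eta \le \varepsilon(X_\xi,D_\xi;q_\eta)$. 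As $\dim X_\xi < n$, the inductive hypothesis applied to $(X_\xi,D_\xi)$ yields $\varepsilon(X_\xi,D_\xi;q_\eta) \le \varepsilon(X_\xi,D_\xi;q_\xi)$, and hence $\varepsilon_\eta$ is at most the first term of $\varepsilon_\xi$.

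The remaining inequality, $\varepsilon_\eta \le s(P'_\xi;v'_\xi)$, is where I expect the real difficulty. My plan here is to reinterpret $s(P'_\xi;v'_\xi)$, via \cite[Remark~3.14]{Ito12}, as the Seshadri constant at the $T$-fixed point of the lower-dimensional toric variety attached to the projected polytope $P'_\xi$, and then to compare the nested quotient maps $\pi_\eta$ and $\pi_\xi$ — a vertex-figure computation in the lattice $M$ relating the edge lengths of $P'_\eta$ and $P'_\xi$ at $v'_\eta$ and $v'_\xi$ — in order to feed the comparison back into the inductive hypothesis. The main obstacle is precisely this combinatorial comparison of lattice edge lengths under the iterated projections, together with the bookkeeping required to keep the ambient Seshadri constants and the intrinsic ones on the strata $X_\xi$ consistently aligned; once it is in place, the monotonicity, and with it the theorem, follows formally.
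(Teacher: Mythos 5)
Your architecture coincides with the paper's: reduce lower semi-continuity, via constancy of $\varepsilon(D;-)$ on $T$-orbits, to the monotonicity $\varepsilon_\eta\le\varepsilon_\xi$ for a covering pair of faces $\eta\subset\xi$ of $P_D$; settle the case $\xi=P_D$ by Lemma~\ref{generic-point} applied to the smooth points of the codimension-one orbit; and split $\varepsilon_\xi$ by Proposition~\ref{ito's prop3.12}. Your handling of the first term of that minimum is correct but roundabout: having reduced to covering pairs, $q_\eta$ lies in a codimension-one orbit of $X_\xi$ and is therefore a smooth point of $X_\xi$, so Lemma~\ref{generic-point} applied on $(X_\xi,D_\xi)$ already gives $\varepsilon(X_\xi,D_\xi;q_\eta)\le\varepsilon(X_\xi,D_\xi;q_\xi)$ and your induction on dimension is unnecessary. (The paper obtains $\varepsilon_\eta\le\varepsilon(X_\xi,D_\xi;q_\eta)$ from \cite[Proposition~5.1.5]{PAG}, whereas you get it by applying Ito's formula inside $X_\xi$ together with the observation that the edges of the face $\pi_\eta(\xi)$ of $P'_\eta$ at $v'_\eta$ are among the edges of $P'_\eta$ at $v'_\eta$; both routes are valid.)

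The genuine gap is the step you yourself flag as the ``main obstacle'': $\varepsilon_\eta\le s(P'_\xi;v'_\xi)$, equivalently (since Proposition~\ref{ito's prop3.12} gives $\varepsilon_\eta\le s(P'_\eta;v'_\eta)$) the inequality $s(P'_\eta;v'_\eta)\le s(P'_\xi;v'_\xi)$. This is not bookkeeping that ``follows formally''; it is the entire nontrivial content of the paper's proof, and the detour you propose through \cite[Remark~3.14]{Ito12} does not produce it: interpreting the two quantities as Seshadri constants at fixed points of the toric varieties attached to $P'_\eta$ and $P'_\xi$ leaves you with two unrelated varieties and no morphism along which to compare them. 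The paper's argument is direct lattice convex geometry. Since $\eta$ is a facet of $\xi$, the induced projection $\alpha\colon M_\mathbb{R}/\mathbb{R}(\eta-\eta)\to M_\mathbb{R}/\mathbb{R}(\xi-\xi)$ has one-dimensional kernel, spanned by the direction of the edge $\pi_\eta(\xi)$ of $P'_\eta$. Choose an edge $\eta''$ of $P'_\xi$ through $v'_\xi$ of minimal lattice length; then $\alpha^{-1}(\eta'')\cap P'_\eta$ is a two-dimensional face of $P'_\eta$, and its edge $\rho'$ through $v'_\eta$ other than $\pi_\eta(\xi)$ satisfies $|\rho'|\le|\alpha(\rho')|\le|\eta''|=s(P'_\xi;v'_\xi)$, where the lengths are taken with respect to the quotient lattices $\pi_\eta(M)$ and $\pi_\xi(M)$: the first inequality holds because $\alpha$ multiplies the lattice length of a non-collapsed segment by a positive integer, and the second because $\alpha(\rho')$ is a subsegment of $\eta''$. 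Hence $s(P'_\eta;v'_\eta)\le|\rho'|\le s(P'_\xi;v'_\xi)$. Without this (or an equivalent) argument, your proof is incomplete at precisely its central point.
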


\begin{proof}
Let $\xi$ be a face of $P_D$ such that $\dim \xi\leq n-1$ and $\tau$ be a face of $P_D$ such that $\xi \subseteq \tau$ and $\dim \tau=\dim \xi+1$. Then for any $p\in O_\xi$ and $q\in O_{\tau}$, we claim that  \[\varepsilon(X,D;p)\le \varepsilon(X,D;q).\] 
Since the Seshadri constant is constant on $T$-orbits, our assertion follows from the above inequality. 

Now we prove the claim. If $\dim\xi=n-1$, then $p$ is a smooth point in $X$.  The desired inequality follows from Lemma \ref{generic-point}. We may assume that $\dim\xi\le n-2$.  
Set $M':=\pi_{\xi}(M)$, $P'_{\xi}:=\pi_{\xi}(P_D)$ and $v'_\xi:=\pi_\xi(\xi)$.
The point $v'_\xi$ is a vertex of the polytope $P'_{\xi}$.  
Let $\pi_{\tau}$ be the projection $M_\mathbb{R}\rightarrow M_\mathbb{R}/\mathbb{R}(\tau-\tau)$ and set $M'':=\pi_{\tau}(M)$, $P''_\tau:=\pi_{\tau}(P_D)$ and $v''_\tau:=\pi_\tau(\tau)$. 
The point $v''_\tau$ is a vertex of the polytope $P''_{\tau}$. The projection map $\pi_{\tau}$ factors through the projection map $\alpha: M_\mathbb{R}/\mathbb{R}(\xi-\xi)\rightarrow M_\mathbb{R}/\mathbb{R}(\tau-\tau)$. 
We have that $M''=\alpha(M')$, $P''_{\tau}=\alpha(P'_{\xi})$ and $v''_{\tau}=\alpha(v'_{\xi})$.

We show that $s(P'_\xi;v'_\xi)\leq s(P''_\tau;v''_\tau)$. Let $\eta''$ be an edge of $P''_{\tau}$ which computes  $s(P''_\tau;v''_\tau)$. Note that  $\alpha^{-1}(\mathbb{R}(\eta''-\eta''))$ is a $2$-dimensional vector space and $\alpha$ maps $\pi_\xi(\tau)$ to the point $v''_\tau$. 
Hence $\alpha^{-1}(\eta'') {\cap P'_\xi}$ is a $2$-dimensional face of $P'_\xi$. Let $\rho'$ be the edge of $\alpha^{-1}(\eta''){\cap P'_\xi}$ passing through $v'_{\xi}$, which is different from $\pi_{\xi}(\tau)$. This is represented in the following figure.
\begin{center}
\definecolor{uuuuuu}{rgb}{0.26666666666666666,0.26666666666666666,0.26666666666666666}
\definecolor{redcolor}{rgb}{1,0,0}
\definecolor{greencolor}{rgb}{0.6,0.8,0.3}
\definecolor{bcduew}{rgb}{0.7372549019607844,0.8313725490196079,0.9019607843137255}
\begin{tikzpicture}[line cap=round,line join=round,x=1cm,y=1cm]
\fill[line width=2pt,color=bcduew,fill=bcduew,fill opacity=0.28] (-3,0) -- (-1,0) -- (0,2) -- (-1.5,3) -- (-2.7,2.5) -- (-4,1) -- cycle;
\draw [line width=2pt,color=redcolor] (-3,0)-- (-1,0);
\draw [line width=1.5pt] (-1,0)-- (0,2);
\draw [line width=1.5pt] (0,2)-- (-1.5,3);
\draw [line width=1.5pt] (-1.5,3)-- (-2.7,2.5);
\draw [line width=1.5pt] (-2.7,2.5)-- (-4,1);
\draw [line width=1.5pt,color=greencolor] (-4,1)-- (-3,0);
\draw [line width=1pt] (3,4)-- (3,-1);
\draw [line width=1.5pt,color=bcduew] (3,3)-- (3,0);
\draw [line width=1.5pt,color=greencolor] (3,1)-- (3,0);
\draw [line width=1pt] (2.9,2.5)-- (3.1,2.5);
\draw [line width=1pt] (2.9,2.5)-- (3.1,2.5);
\draw [line width=1pt] (2.9,2)-- (3.1,2);
\draw [line width=1pt] (2.9,1.5)-- (3.1,1.5);
\draw [line width=1pt] (2.9,1)-- (3.1,1);
\draw [line width=1pt] (2.9,0.5)-- (3.1,0.5);
\draw [line width=1pt, dotted] (-0.8,0)-- (2.8,0);
\draw [line width=1pt,dotted] (-1.3,3)-- (2.8,3);
\begin{scriptsize}
\draw [fill=black] (-3,0) circle (1pt);
\draw [fill=black] (-1,0) circle (1pt);
\draw [fill=black] (0,2) circle (1pt);
\draw [fill=black] (-1.5,3) circle (1pt);
\draw [fill=black] (-2.7,2.5) circle (1pt);
\draw [fill=black] (-4,1) circle (1pt);
\draw [fill=uuuuuu] (3,3) circle (2pt);
\draw [fill=redcolor] (3,0) circle (2pt);
\end{scriptsize}
\draw[color=black] (3.35,3) node {}; 
\draw[color=black] (-3.3,-0.3) node {$v'_{\xi}$};
\draw[color=red] (-1.9,-0.35) node {$\pi_{\xi}(\tau)$};
\draw[color=greencolor] (-3.8,0.4) node {$\rho'$};
\draw[color=greencolor] (3.8,0.5) node {$\alpha(\rho')$};
\draw[color=black] (-2,1.4) node {$\alpha^{-1}(\eta'')\cap P'_{\xi}$};
\draw[color=black] (3.8,1.4) node {$\eta''$};
\draw[color=blue] (1.4,1.7) node {$\alpha$};
\draw[color=redcolor] (3.35,0) node {$v''_{\tau}$};
\draw[->,blue] (0.7,1.5) -- (2.1,1.5);
\end{tikzpicture}
\end{center}
We have that $|\rho'|_{M'_{\rho'}}\le |\alpha(\rho')|_{M''_{\eta''}}\le  |\eta''|_{M''_{\eta''}} =s(P''_\tau;v''_\tau)$, the first inequality follows the definition of $M''_{\eta''}$ and the second inequality follows from the fact that $\alpha{(\rho')}$ is a subset of $\eta''$.
Hence  $s(P'_\xi;v'_\xi)\le |\rho'|_{M_{\rho'}}\leq s(P''_\tau;v''_\tau)$. 
This inequality implies that $\varepsilon(X,D;p)\leq s(P''_\tau;v''_\tau)$ by Proposition~\ref{ito's prop3.12}.

From the equivalent definition of the Seshadri constant in \cite[Proposition 5.1.5]{PAG}, we have that  $\varepsilon(X,D;p) \leq \varepsilon(X_\tau,D_\tau;p)$. 
Since $O_\xi$ has codimension one in $X_\tau$, then $p$ is a smooth point in the toric variety $X_\tau$. Lemma \ref{generic-point} implies that  
$\varepsilon(X_\tau,D_\tau;p)\leq \varepsilon(X_\tau,D_\tau;q)$. We thus have $\varepsilon(X,D;p) \leq \varepsilon(X_\tau,D_\tau;q)$. By Proposition~\ref{ito's prop3.12}, we have 
\[\varepsilon(X,D;p)\leq \min\{\varepsilon(X_\tau,D_\tau;q),s(P''_\tau;v''_\tau)\}=\varepsilon(X,D;q). \qedhere
\]
\end{proof}


Note that the Seshadri constant function is constant on each $T$-orbit of the projective toric variety $X$. We deduce that the set $\{p\in X~|~\varepsilon(X,D;p)=\varepsilon(X,D)\}$ is $T$-invariant and closed, and in particular it contains a $T$-invariant point. 

\begin{corollary} \label{corollary.seshadri.invariant.points}
Let $D$ be an ample Cartier $T$-divisor on a projective toric variety $X$, Then $\varepsilon(X,D)$ is equal to $\min\{ \varepsilon(X,D;p) \, | \,  p \in X \textnormal{ is $T$-invariant} \}$.
In particular, $\varepsilon(X,D)$ is an integer. 
\end{corollary}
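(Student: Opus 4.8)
The plan is to combine the lower semi-continuity established in Theorem~\ref{theorem.lower.semicontinuous} with the finiteness of the orbit decomposition of $X$ and the fact, recorded in the paragraph just above the statement, that $\varepsilon(X,D;-)$ is constant on each $T$-orbit. First I would note that a toric variety has only finitely many $T$-orbits, so that the function $\varepsilon(X,D;-)$ takes only finitely many values; hence the infimum in $\varepsilon(X,D)=\inf\{\varepsilon(X,D;p)\mid p\in X\}$ from Definition~\ref{definition.seshadri} is in fact a minimum, and I write $c:=\varepsilon(X,D)$ for this minimal value.

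Next I would consider the level set $S=\{p\in X\mid \varepsilon(X,D;p)=c\}$. Since $c$ is the minimum of $\varepsilon(X,D;-)$, we have $S=\{p\in X\mid \varepsilon(X,D;p)\le c\}$, which is closed by the lower semi-continuity of Theorem~\ref{theorem.lower.semicontinuous}; and because $\varepsilon(X,D;-)$ is constant on $T$-orbits, $S$ is a union of orbits, hence $T$-invariant. Thus $S$ is a nonempty, closed, $T$-invariant subset of the complete toric variety $X$, and therefore contains a closed $T$-orbit, that is, a $T$-invariant point $x_\sigma$ (for instance by applying Borel's fixed-point theorem to an irreducible component of $S$, exactly as in the proof of Proposition~\ref{proposition.invariant.enough}, or directly via the orbit--cone correspondence). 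With such an $x_\sigma$ at hand the asserted equality follows at once: we have $\varepsilon(X,D;x_\sigma)=c=\varepsilon(X,D)\le \varepsilon(X,D;p)$ for every $p\in X$, and in particular for every $T$-invariant $p$, so $\varepsilon(X,D)=\min\{\varepsilon(X,D;p)\mid p\in X\text{ is }T\text{-invariant}\}$.

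For the final integrality claim I would invoke the description of the Seshadri constant at a $T$-invariant point already used in Proposition~\ref{proposition.triple.equivalence}: by \cite[Remark~3.14]{Ito12}, the Seshadri constant at the $T$-invariant point corresponding to a vertex $u_\sigma$ of $P_D$ equals the minimum lattice length of the edges of $P_D$ through $u_\sigma$. Since each such lattice length lies in $\mathbb{Z}^+$, the value $\varepsilon(X,D)=\varepsilon(X,D;x_\sigma)$ is a positive integer. The genuine content here is the lower semi-continuity asserted in Theorem~\ref{theorem.lower.semicontinuous}, which I am free to assume; granting it, the only remaining point that requires any care is the standard passage from ``nonempty, closed and $T$-invariant'' to ``contains a $T$-fixed point,'' which holds on any complete toric variety, so I expect no serious obstacle in the corollary itself.
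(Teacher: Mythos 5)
Your proof is correct and takes essentially the same route as the paper: the paper likewise notes that $\varepsilon(X,D;-)$ is constant on $T$-orbits, deduces via Theorem~\ref{theorem.lower.semicontinuous} that the level set $\{p\in X \mid \varepsilon(X,D;p)=\varepsilon(X,D)\}$ is closed and $T$-invariant, hence contains a $T$-invariant point, and gets integrality from the vertex description of the Seshadri constant in \cite[Remark~3.14]{Ito12}. You merely make explicit two steps the paper leaves implicit (attainment of the infimum via finiteness of orbits, and the passage from a nonempty closed $T$-invariant set to a fixed point via Borel), both of which are sound.
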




\bibliographystyle{alpha}
\bibliography{JetAmplenessFujita}

\end{document}